\pgfplotsset{compat=newest}
\pgfplotsset{colormap={whitered}{color(0cm)=(white);
	color(1cm)=(orange!75!red)}
}
\newtheorem{theorem}{Theorem}[section]
\newtheorem{lemma}[theorem]{Lemma}
\newtheorem{remark}[theorem]{Remark}
\newtheorem{generalisation}[theorem]{Generalisation}
\newtheorem{definition}[theorem]{Definition}
\newtheorem{example}[theorem]{Example}
\newtheorem{corollary}[theorem]{Corollary}
\newtheorem{proposition}[theorem]{Proposition}
\def\imod#1{\allowbreak\mkern10mu({\operator@font mod}\,\,#1)}
\numberwithin{equation}{section}
\numberwithin{table}{section}
\numberwithin{figure}{section}
\newcommand{\bend}{\hspace*{0ex} \hfill \hbox{\vrule height
    1.5ex\vbox{\hrule width 1.4ex \vskip 1.4ex\hrule  width 1.4ex}\vrule
    height 1.5ex}}
\long\def\symbolfootnote[#1]#2{\begingroup \def\thefootnote{\fnsymbol{footnote}}\footnote[#1]{#2}\endgroup}
\let\@fnsymbol\@arabic
\title{Efficient multivariate approximation on the cube}
\date{}\author{
	Robert Nasdala\thanks{
    	Faculty of Mathematics, Chemnitz University of Technology, D-09107 Chemnitz, Germany.\newline E-mail:                \href{mailto:robert.nasdala@math.tu-chemnitz.de}{robert.nasdala@math.tu-chemnitz.de}
	}
	\and
	Daniel Potts\thanks{
		Faculty of Mathematics, Chemnitz University of Technology, D-09107 Chemnitz, Germany.\newline E-mail:                \href{mailto:potts@math.tu-chemnitz.de}{potts@math.tu-chemnitz.de}
	}
}
\begin{document}

\maketitle

\begin{abstract}
	We combine a periodization strategy for weighted $L_{2}$-integrands with efficient approximation methods in order to approximate multivariate non-periodic functions on the high-dimensional cube $\left[-\frac{1}{2},\frac{1}{2}\right]^{d}$.
		Our concept allows to determine conditions on the $d$-variate torus-to-cube transformations ${\psi:\left[-\frac{1}{2},\frac{1}{2}\right]^{d}\to\left[-\frac{1}{2},\frac{1}{2}\right]^{d}}$ 
		such that a non-periodic function is transformed into a smooth function in the Sobolev space $\mathcal H^{m}(\mathbb{T}^{d})$ when applying $\psi$. 
		We adapt some $L_{\infty}(\mathbb{T}^{d})$- and $L_{2}(\mathbb{T}^{d})$-approximation error estimates for single rank-$1$ lattice approximation methods and adjust algorithms for the fast evaluation and fast reconstruction of multivariate trigonometric polynomials on the torus in order to apply these methods to the non-periodic setting. 
		We illustrate the theoretical findings by means of numerical tests in up to $d=5$ dimensions.
	
\medskip

\noindent {Keywords and phrases} : efficient approximation on the high dimensional cube, change of variables, lattice rule, {rank-$1$} lattice, fast Fourier transform

\medskip

\noindent {2010 AMS Mathematics Subject Classification} : \text{
65T 42B05}
\end{abstract}

\medskip

\section{Introduction}
In this paper we discuss a general framework for the approximation of non-periodic multivariate functions $h$ on the $d$-dimensional cube $\left[-\frac{1}{2},\frac{1}{2}\right]^d$.
We combine a periodization strategy for weighted $L_{2}$-integrands with approximation methods based on \mbox{rank-$1$} lattices.
For dimension $d=1$, we consider increasing and $k$-times continuously differentiable transformations $\psi:\left[-\frac{1}{2},\frac{1}{2}\right]\to\left[-\frac{1}{2},\frac{1}{2}\right]$ whose derivatives vanish at the boundary points $\left\{-\frac{1}{2}, \frac{1}{2}\right\}$.
Applying such a change of variables $y = \psi(x)$ to any function $h\in L_{2}\left(\left[-\frac{1}{2},\frac{1}{2}\right],\omega\right)$ yields
\begin{align}\label{eq:change_of_variables_first}
\int_{-\frac{1}{2}}^{\frac{1}{2}} |h(y)|^2 \, \omega(y) \,\mathrm{d}y
	= \int_{-\frac{1}{2}}^{\frac{1}{2}} \left| h(\psi(x)) \right|^2 \, \omega(\psi(x)) \, \psi'(x) \, \mathrm{d}x
	= \int_{-\frac{1}{2}}^{\frac{1}{2}} \left| f(x) \right|^2 \, \mathrm{d}x
\end{align}
with $f(x) = h(\psi(x))\sqrt{\omega(\psi(x)) \, \psi'(x)}$. 
Considering dimensions $d\geq 2$ a multivariate generalization yields similar $d$-variate functions $f$.
We prove that the transformed function $f$ is continuously extendable on the torus $\mathbb{T}^d$ and has some guaranteed minimal degree of Sobolev-smoothness, provided that the non-periodic function $h$ has certain smoothness properties and, both the weight function $\omega$ and the transformation $\psi$ fulfill certain boundary conditions.
As a consequence of this analysis, we rewrite the involved objects, algorithms and approximation error bounds in terms of the inverse transformation $\psi^{-1}$.
This strategy converts the approximation of a function ${f\in L_2(\mathbb{T}^d)}$ by the Fourier partial sum ${S_{I}f := \sum_{\mathbf k\in I} \hat f_{\mathbf k}\,\mathrm{e}^{2\pi\mathrm i \mathbf k\cdot \circ}}$ translates into the approximation of a function $h\in L_2\left(\left[-\frac{1}{2},\frac{1}{2}\right]^d,\omega\right)$ by the transformed Fourier partial sum  $\sum_{\mathbf k\in I} \hat h_{\mathbf k}\, \varphi_{\mathbf k}$ based on the orthonormal system $\left\{ \varphi_{\mathbf k} :=  \sqrt{\frac{(\psi^{-1})'(\circ)}{\omega(\circ)}} \, \mathrm{e}^{2\pi\mathrm i \mathbf k\cdot\psi^{-1}(\circ)} \right\}_{\mathbf k\in\mathbb{Z}^d}$ and the Fourier coefficients of $h$ are defined as ${\hat h_{\mathbf k} := (h,\varphi_{\mathbf k})_{L_{2}\left(\left[-\frac{1}{2},\frac{1}{2}\right]^d,\omega\right)}}$.
Furthermore, the outlined periodization strategy allows to apply existing approximation methods for smooth functions defined on the torus $\mathbb T^d$.
More precisely, we adapt results coming from approximation theory in the Wiener Algebra $\mathcal{A}(\mathbb{T}^d)$, which contains all $L_{1}(\mathbb{T}^d)$-functions with absolutely summable Fourier coefficients $\hat f_{\mathbf k}$ and $\mathbf k = (k_1,\ldots,k_d)^{\top}\in\mathbb{Z}^d$, see \cite{Tem93,DuTeUl2018}.
We define the weight function 
\begin{align} \label{def:hyperbolic_cross_weight}
	\omega_{\mathrm{hc}}(\mathbf k) := \prod_{j=1}^{d} \max(1, |k_j|)
\end{align}
and, for $\beta\ge0$, we denote suitable subspaces of the Wiener Algebra $\mathcal{A}(\mathbb{T}^d)$ by
\begin{align}
	\label{def:Aalphaspace}
	\mathcal{A}^{\beta}(\mathbb T^d) 
	&:= \left\{ f \in L_{1}(\mathbb{T}^d) : \|f\|_{\mathcal{A}^{\beta}(\mathbb{T}^d)} := \sum_{\mathbf k\in\mathbb{Z}^d} \omega_{\mathrm{hc}}(\mathbf k)^{\beta} |\hat f_{\mathbf k}| < \infty \right\}.
\end{align}
Moreover, we consider the Hilbert spaces
\begin{align}
	\label{def:HbetaRaum}
	\mathcal{H}^{\beta}(\mathbb T^d) 
	&:= \left\{ f \in L_2(\mathbb{T}^d) : \|f\|_{\mathcal{H}^{\beta}(\mathbb{T}^d)} := \left( \sum_{\mathbf k\in\mathbb{Z}^d} \omega_{\mathrm{hc}}(\mathbf k)^{2\beta} |\hat f_{\mathbf k}|^2 \right)^{\frac{1}{2}} < \infty \right\}.
\end{align}
The norms defined above characterize the decay of the Fourier coefficients $\hat f_{\mathbf k}$ with respect to the weight function ${\omega_{\mathrm{hc}}}$ and the smoothness rate $\beta$.
Furthermore, we define hyperbolic cross index sets ${I_{N}^{d} := \{ \mathbf k\in\mathbb{Z}^d: \omega_{\mathrm{hc}}(\mathbf k)\leq N \} \subset \mathbb{Z}^d}$ with $N\in\mathbb{N}$, which contain in some sense the most important frequencies of functions in $\mathcal{A}^{\beta}(\mathbb T^d)$ and $\mathcal{H}^{\beta}(\mathbb T^d)$.
We apply existing error bounds on the approximation $S_{I_{N}^{d}}^{\Lambda}f := \sum_{\mathbf k\in I} \hat f_{\mathbf k}^{\Lambda}\,\mathrm{e}^{2\pi\mathrm i \mathbf k\cdot \circ}$ of periodic functions $f\in\mathcal{A}^{\beta}(\mathbb T^d)$ and $f\in\mathcal{H}^{\beta}(\mathbb T^d)$.
The used coefficients $\hat f_{\mathbf k}^{\Lambda}$ are just approximated Fourier coefficients $\hat f_{\mathbf k}^{\Lambda} \approx\hat f_{\mathbf k}$ that are computed based on samples along a suitable rank-1 lattice.
For continuous functions $f\in\mathcal{A}^{\beta}(\mathbb{T}^d)$, it was shown in \cite[Theorem~3.3]{KaPoVo13} that the $L_{\infty}$-approximation error $\left\|f - S_{I_{N}^{d}}^{\Lambda}f\right\|_{L_{\infty}(\mathbb{T}^d)}$ is bounded above by $N^{-\beta} \|f\|_{\mathcal{A}^{\beta}(\mathbb{T}^d)}$.
For continuous functions $f\in\mathcal{H}^{\beta}(\mathbb{T}^d)$, the same approximant causes the $L_2$-errors $\left\|f - S_{I_{N}^{d}}^{\Lambda}f\right\|_{L_{2}(\mathbb{T}^d)}$ to be bounded above by $C_{d,\beta} N^{-\beta} (\log N)^{(d-1)/2} \|f\|_{\mathcal{H}^{\beta}(\mathbb{T}^d)}$ with some constant $C_{d,\beta} = C(d,\beta) > 0$ when measured in the $L_{2}(\mathbb{T}^d)$-norm, as shown in \cite[Theorem~2.30]{volkmerdiss}.
The approximation of functions in Hilbert space $\mathcal{H}^{\beta}(\mathbb{T}^d)$ was investigated in \cite{Tem86} and later, more generally, in \cite{KaPoVo13}.

In general, it is hard to calculate the Fourier coefficients $\hat f_{\mathbf k}$ in order to determine if they are absolutely or square summable.
Instead we use certain norm equivalences to extract the information about the decay rate of the Fourier coefficients $\hat f_{\mathbf k}$. 
Given a multi-index $\bm{\alpha} = (\alpha_1, \ldots, \alpha_d)^{\top}\in\mathbb{N}_{0}^d$ with $\|\bm\alpha\|_{\ell_{\infty}} := \max(|\alpha_1|,\ldots,|\alpha_d|)$ and the domain ${\Omega \in \{ \mathbb{T}^d, \left[-\frac{1}{2},\frac{1}{2}\right]^d \}}$ we define the norm
\begin{align} \label{def:Hmix_norm}
	\|f\|_{H_{\mathrm{mix}}^{m}(\Omega)} 
	:= \left( \sum_{\|\bm\alpha\|_{\ell_{\infty}}\leq m} \| D^{\bm\alpha} [f] \|_{L_{2}(\Omega)}^{2} \right)^{1/2}
\end{align}
of the Sobolev space $H_{\mathrm{mix}}^{m}(\Omega)$ of functions $f\in L_{2}(\Omega)$ with mixed smoothness ${m\in\mathbb{N}_{0}}$, that has been studied in \cite{TriSchmei87,UllTDiss,VybiralDiss}.
Similarly, we define the norm
\begin{align} \label{def:Cmix_norm}
	\|f\|_{\mathcal{C}_{\mathrm{mix}}^{m}(\Omega)} := \sum_{\|\bm\alpha\|_{\ell_{\infty}}\leq m} \| D^{\bm\alpha} [f] \|_{L_{\infty}(\Omega)}
\end{align}
of the space $\mathcal{C}_{\mathrm{mix}}^{m}(\Omega)$ of functions with mixed continuous differentiability order $m\in\mathbb{N}_{0}$ as defined in \cite{TriSchmei87}.
The norms $\|\cdot\|_{H_{\mathrm{mix}}^{m}(\mathbb{T}^d)}$
and $\|\cdot\|_{\mathcal{H}^{\beta}(\mathbb{T}^d)}$ are equivalent for ${\beta = m \in\mathbb{N}}$ as shown in \cite[Lemma~2.3]{KuSiUl15}.
Furthermore, for all $\beta \geq 0$ and all $\lambda>\frac{1}{2}$, we have the continuous embedding ${\mathcal{H}^{\beta+\lambda}(\mathbb{T}^d) \hookrightarrow \mathcal{A}^{\beta}(\mathbb{T}^d)}$ as shown in \cite[Lemma~2.2]{KaPoVo13}.

The goal is to guarantee that a function $f$ obtained by the change of variables~\eqref{eq:change_of_variables_first} is in $\mathcal{A}^{m}(\mathbb{T}^d)$ or $\mathcal H^{m}(\mathbb{T}^d)$, for which we provide a set of sufficient $L_{\infty}$-conditions.
These are easier to check than estimating either one of the equivalent norms $\|\cdot\|_{H_{\mathrm{mix}}^{m}(\mathbb{T}^d)} \sim \|\cdot\|_{\mathcal{H}^{m}(\mathbb{T}^d)}$.
At first we prove these conditions for all possible transformations $\psi$ and weight functions $\omega$. 
Later on we use families of parameterized transformations ${\psi(\circ) = \psi(\circ, \bm\eta)}$ and families of weight functions ${\omega(\circ) = \omega(\circ, \bm\mu)}$ with $\bm\eta,\bm\mu\in\mathbb{R}_{+}^d$.
Consequently, we end up with parameterized transformed functions ${f(\circ) = f(\circ,\bm\eta,\bm\mu) \in L_{2}(\mathbb{T}^d)}$ and both parameter vectors may affect the smoothness of these functions.
Based on these sufficient $L_{\infty}$-smoothness conditions, we calculate sufficient lower bounds on $\bm\eta$ and $\bm\mu$ such that the smoothness degree $m$ of a function ${h\in L_{2}\left(\left[-\frac{1}{2},\frac{1}{2}\right]^d,\omega(\circ,\bm\mu)\right) \cap \mathcal{C}_{\mathrm{mix}}^{m}\left(\left[-\frac{1}{2},\frac{1}{2}\right]^d\right)}$ 
remains the same under the composition with a family of transformations $\psi(\circ,\bm\eta)$ so that we end up with ${f\in \mathcal H^{m}(\mathbb{T}^d)}$.

This preservation of smoothness allows for transferring well-known results from the torus $\mathbb{T}^d$ to the non-periodic setting on the cube $\left[-\frac{1}{2},\frac{1}{2}\right]^d$ by means of the inverse transformation $\psi^{-1}$, which includes:
\begin{itemize}
	\item reasonable $L_{2}$- and $L_{\infty}$-approximation results from \cite{KaPoVo13,ByKaUlVo16},
	\item highly efficient algorithms for high-dimensional approximation based on rank-$1$ lattice sampling \cite{KaPoVo13,kaemmererdiss}.
\end{itemize}
In general, the change of variables is a versatile and powerful tool in numerical analysis.
We recommend the excellent overview in \cite[Chapter 16 and 17]{boyd00}, where many practical aspects of the described methods are discussed. 
In recent years these periodization approaches were repeatedly used for the numerical integration and approximation of non-periodic functions in Chebyshev spaces \cite{PoVo15}, as well as in half-periodic cosine spaces and in Korobov spaces by means of tent-transformed lattice rules \cite{DiNuPi14,CoKuNuSu16,GoSzYo19,KuoMiNoNu19}.
Specific strategies to periodize integrands have been discussed for numerical integration in \cite{KuoSlWo07}.
Besides single and multiple rank-$1$ lattice rules \cite{KaPoVo13,kaemmererdiss}, there are several other sampling strategies for periodic signals such as sparse grids \cite{GriHa13,ByDuSiUl14,GriHa19}, randomized least square sampling approaches \cite{KaUlVo19,KrUl19} and also interlaced scrambled polynomial lattice rules \cite{GoDi15,DiGoSuYo17}.
However, we focus on sinlge rank-$1$ lattice based methods in this paper. 
An introduction to numerical integration based on lattice rules can be found in \cite{Nie78,SlJo94,JoKuSl13}. 
These cubature rules can also be used for the approximation of functions on the torus \cite{Tem93}. 
Efficient algorithms based on component-by-component methods \cite{NuCo04,CoKuNu10} were presented to compute high-dimensional integrals.
For the approximation of high-dimensional functions there are efficient algorithms using sampling schemes based on {rank-$1$} lattices \cite{KaPoVo13,kaemmererdiss}.
Moreover, there exist strategies that allow for the efficient approximation of high-dim functions based on rank-$1$ lattice sampling.
These approaches provide reasonable approximation properties \cite{ByKaUlVo16,KuoMiNoNu19}. 
We adjust the algorithms for the non-periodic setting and incorporate the outlined transformations. 
In addition to the theoretical considerations, we adapt these algorithms for the non-periodic setting and apply the results in order to present numerical examples.

The outline of the paper is as follows:
In Section~2 we provide the basic notions from classical Fourier approximation theory on the torus $\mathbb T^d$, the corresponding function spaces and important convergence properties. 
We introduce the Sobolev spaces $H_{\mathrm{mix}}^{m}(\mathbb T^d)$ of mixed natural smoothness order $m\in\mathbb{N}_0$ and the Wiener Algebra $\mathcal{A}(\mathbb T^d)$ of functions with absolutely summable Fourier coefficients.
We discuss certain properties of the subspaces $\mathcal{A}^{\beta}(\mathbb T^d)$ and $\mathcal{H}^{\beta}(\mathbb T^d)$ of the Wiener Algebra
and we highlight the norm equivalence of $\|\cdot\|_{\mathcal{H}^{m}(\mathbb{T}^d)}$ and $\|\cdot\|_{H_{\mathrm{mix}}^{m}(\mathbb{T}^d)}$ for all $m\in\mathbb{N}$, see \cite{KuSiUl15}.
Subsequently, we define {rank-$1$} lattices as introduced in \cite{Ko59},
discuss their importance in the context of Fourier approximation 
and recall two important approximation error bounds on the torus in Theorems~\ref{thm:L_infty_approx_error_torus} and \ref{eq:L_2_approximation_error_bound}.
In Section~3 we define the notion of a torus-to-cube transformation $\psi:\left[-\frac{1}{2},\frac{1}{2}\right]^d \to \left[-\frac{1}{2},\frac{1}{2}\right]^d$ and compare some examples.
Then, we introduce weight functions ${\omega:\left[-\frac{1}{2},\frac{1}{2}\right]^d\to[0,\infty)}$ and
specify the structure of the weighted Hilbert spaces $L_2\left(\left[-\frac{1}{2},\frac{1}{2}\right]^d,\omega\right)$, the corresponding weighted scalar product $(\cdot, \cdot)_{L_2\left(\left[-\frac{1}{2},\frac{1}{2}\right]^d,\omega\right)}$ and the Fourier coefficients $\hat h_{\mathbf k}$.
Afterwards, we prove sufficient $L_{\infty}$-conditions on the transformation $\psi$ and weight function $\omega$ for a function 
$h\in L_2\left(\left[-\frac{1}{2},\frac{1}{2}\right]^d,\omega\right)\cap \mathcal{C}_{\mathrm{mix}}^{m}\left(\left[-\frac{1}{2},\frac{1}{2}\right]^d\right)$ to be transformed into a smooth function $f \in \mathcal H^{m}(\mathbb{T}^d)$ by the composition with $\psi$.
In Theorems~\ref{thm:L_infty_approx_error_multivar} and \ref{thm:Hm_approx_error_decay_multivar} we prove upper bounds on the approximation error $\left\|h - S_{I_{N}^{d}}^{\Lambda}h\right\|$ measured in weighted $L_{2}$- and $L_{\infty}$-norms on $\left[-\frac{1}{2},\frac{1}{2}\right]^d$ that are based on the error bounds that were given in Section~2.
In Section~4 we incorporate the transformations $\psi$ into \cite[Algorithm~3.1 and 3.2]{kaemmererdiss}, which yields for the efficient realization of the evaluation and the reconstruction of multivariate non-periodic functions, as outlined in the Algorithms~\ref{alg:LFFT_eval} and \ref{alg:LFFT_recon}.
In Section~5 we illustrate the theoretical results by several numerical tests that use the logarithmic transformation \eqref{eq:logarithmic_trafo_comb} and the sine transformation \eqref{eq:sine_trafo}.
For univariate and multivariate test functions $h$ we use a constant weight function ${\omega\equiv 1}$.
Based on the sufficient $L_{\infty}$-conditions presented in Section~3, we calculate explicit bounds for $\bm\eta\in\mathbb{R}_{+}^d$ that preserve the degree of smoothness $m\in\mathbb{N}$ when composing the function $h$ with a family of transformations ${\psi(\circ,\bm\eta)}$.
Afterwards we apply algorithms of the previous section in order to approximate specific high-dimensional functions in up to $d=5$ dimensions and we discuss the results.

\section{Fourier approximation}
We introduce weighted $L_{2}$-function spaces and Sobolev spaces of mixed smoothness,
recall some definitions of classical Fourier approximation theory
and define a space of functions that have absolute square-summable Fourier coefficients.
We also reflect the ideas of {rank-$1$} lattices \cite{SLKA87,CoKuNu10,kaemmererdiss}, the corresponding Fourier approximation methods and approximation error bounds \cite{Tem86,KaPoVo13,ByKaUlVo16}.

\subsection{Preliminaries}
Let $\Omega \in \left\{ \mathbb{T}^d, \left[-\frac{1}{2},\frac{1}{2}\right]^d \right\}$ with $\mathbb T^d \simeq [-\frac{1}{2},\frac{1}{2})^d$ being the $d$-dimensional torus. 
The space ${\left(\mathcal{C}(\Omega), \|\cdot\|_{L_{\infty}(\Omega)}\right)}$ denotes the collection of all continuous multivariate functions ${f:\Omega \to \mathbb{C}}$, 
and $\left(\mathcal{C}_{0}\left(\left[-\frac{1}{2},\frac{1}{2}\right]^d\right), \|\cdot\|_{L_{\infty}\left(\left[-\frac{1}{2},\frac{1}{2}\right]^d\right)}\right)$ denotes the space of all continuous functions that vanish at the boundary points $[-\frac{1}{2},\frac{1}{2}]^d\setminus (-\frac{1}{2},\frac{1}{2})^d$.
For the multi-indices $\bm\alpha \in\mathbb{N}_{0}^d$ we define the differential operator 
\begin{align*} 
	D^{\bm \alpha}[f](\mathbf x) = D^{(\alpha_1,\ldots,\alpha_d)}[f](x_1,\ldots,x_d) := \frac{\partial^{\alpha_1}}{\partial x_{1}^{\alpha_1}}\ldots\frac{\partial^{\alpha_d}}{\partial x_{d}^{\alpha_d}} [f](x_1,\ldots,x_d).
\end{align*}
We define the \emph{function space of mixed continuous differentiability} of order $m\in\mathbb{N}$, see \cite[page 132]{TriSchmei87}, as
\begin{align*}
	\mathcal{C}_{\mathrm{mix}}^{m}(\Omega)
	:= \left\{ f\in\mathcal{C}(\Omega) : \|f\|_{\mathcal{C}_{\mathrm{mix}}^{m}(\Omega)} < \infty \right\}
\end{align*}
with $\|\cdot\|_{\mathcal{C}_{\mathrm{mix}}^{m}(\Omega)}$ given in \eqref{def:Cmix_norm}.
The corresponding univariate function spaces are denoted by $\mathcal{C}^{m}(\Omega)$.

The weighted function spaces $L_2(\Omega, \omega)$ with an integrable weight function ${\omega:\Omega\to[0,\infty)}$ are defined as
\begin{align}
	\label{def:weighted_L2_space}
	L_2(\Omega, \omega) := \left\{ h\in L_2(\Omega) : \|h\|_{L_2(\Omega, \omega)} := \left( \int_{\Omega} |h(\mathbf x)|^2 \, \omega(\mathbf x) \,\mathrm{d}\mathbf x \right)^{\frac{1}{2}} < \infty \right\}.
\end{align}
For the constant weight function $\omega(\mathbf x) \equiv 1$ we have $L_2(\Omega, \omega) = L_{2}(\Omega)$.
For functions $f$ and $g$ in the Hilbert space $L_2(\mathbb T^d)$ we have the scalar product
\begin{align*}
	(f,g)_{L_2(\mathbb T^d)} := \int_{\mathbb T^d} f(\mathbf x)\, \overline{g(\mathbf x)}\,\mathrm d\mathbf x .
\end{align*}
For any frequency set $I \subset \mathbb{Z}^d$ of finite cardinality $|I|<\infty$ we denote the space of all multivariate trigonometric polynomials supported on $I$ by
\begin{align*} 
	\Pi_{I} := \mathrm{span}\{ \mathrm{e}^{2\pi\mathrm i \mathbf k \cdot \circ} : \mathbf k\in I\}.
\end{align*}
The functions $\mathrm{e}^{2\pi\mathrm i \mathbf k \cdot\mathbf x} = \prod_{j=1}^{d}\mathrm{e}^{2\pi\mathrm i k_j x_j}$ with ${\mathbf{k}\in \mathbb{Z}^d}$ and ${\mathbf x \in\mathbb T^d}$ 
are orthogonal with respect to the $L_2(\mathbb T^d)$-scalar product.
For all $\mathbf k\in \mathbb{Z}^d$ we denote the \textsl{Fourier coefficients} $\hat f_{\mathbf k}$ by
\begin{align*}
	\hat f_{\mathbf k} 
	= (f, \mathrm{e}^{2\pi\mathrm i \mathbf k \cdot \circ})_{L_2(\mathbb T^d)} 
	= \int_{\mathbb T^d} f(\mathbf x)\,\mathrm{e}^{-2\pi\mathrm i \mathbf k \cdot \mathbf x} \,\mathrm d\mathbf x,
\end{align*}
and the corresponding \textsl{Fourier partial sum} by $S_{I}f(\mathbf x) = \sum_{\mathbf k\in I} \hat f_{\mathbf k}\,\mathrm{e}^{2\pi\mathrm i \mathbf k\mathbf \cdot \mathbf x}$.
For all ${f \in L_2(\mathbb T^d)}$ we have
\begin{align*}
	\|f - S_{I}f\|_{L_{2}(\mathbb T^d)} \to 0 \quad \text{for} \quad |I|\to\infty,
\end{align*} 
where $|I|\to\infty$ means $\min(|k_1|, \ldots, |k_d|)\to\infty$ for $\mathbf k = (k_1,\ldots,k_d)^{\top}\in I$, see \cite[Theorem~4.1]{We12}.
Finally, we define the \emph{Sobolev spaces of mixed natural smoothness} of $L_2(\Omega)$-functions with smoothness order $m\in\mathbb{N}_{0}$, see \cite{TriSchmei87,UllTDiss,VybiralDiss}, as
\begin{align*} 
	H_{\mathrm{mix}}^{m}(\Omega)
	:= \left\{ f\in L_2(\Omega) : \|f\|_{H_{\mathrm{mix}}^{m}(\Omega)} < \infty \right\}
\end{align*}
with $\|\cdot\|_{H_{\mathrm{mix}}^{m}(\Omega)}$ being given in \eqref{def:Hmix_norm}.
The corresponding univariate spaces are denoted by $H^{m}(\Omega)$.
Based on the weight function $\omega_{\mathrm{hc}}(\mathbf k)$ given in \eqref{def:hyperbolic_cross_weight} we define the \textsl{hyperbolic crosses} $I_{N}^{d}$ as
\begin{align} \label{def:hyperbolic_cross}
	I_{N}^{d}
	:= \left\{ \mathbf k \in\mathbb{Z}^d : \omega_{\mathrm{hc}}(\mathbf k) \leq N \right\},
\end{align}
illustrated for $N=16$ in two dimensions in Figure~\ref{fig:HyperbolicCrosses}. 
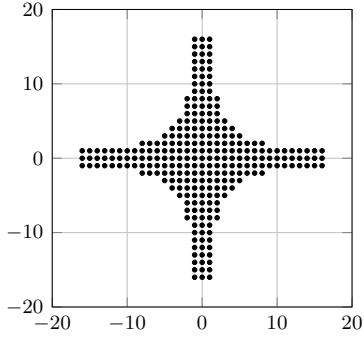
\begin{figure}[t]
	\centering
\begin{tikzpicture}[scale=0.85]
	\begin{axis}[scatter/classes = { a = {mark=o, draw=black} },
	font=\footnotesize,
	grid = both,
	xmax = 20, xmin = -20, ymax = 20, ymin = -20,
	height=0.4\textwidth, width=0.4\textwidth,
unit vector ratio*=1 1 1
	]
	\addplot[scatter ,only marks, mark size=1, scatter src = explicit symbolic]coordinates{
		(-16,-1)(-16,0)(-16,1)(-15,-1)(-15,0)(-15,1)(-14,-1)(-14,0)(-14,1)(-13,-1)(-13,0)(-13,1)(-12,-1)(-12,0)(-12,1)(-11,-1)(-11,0)(-11,1)(-10,-1)(-10,0)(-10,1)(-9,-1)(-9,0)(-9,1)(-8,-2)(-8,-1)(-8,0)(-8,1)(-8,2)(-7,-2)(-7,-1)(-7,0)(-7,1)(-7,2)(-6,-2)(-6,-1)(-6,0)(-6,1)(-6,2)(-5,-3)(-5,-2)(-5,-1)(-5,0)(-5,1)(-5,2)(-5,3)(-4,-4)(-4,-3)(-4,-2)(-4,-1)(-4,0)(-4,1)(-4,2)(-4,3)(-4,4)(-3,-5)(-3,-4)(-3,-3)(-3,-2)(-3,-1)(-3,0)(-3,1)(-3,2)(-3,3)(-3,4)(-3,5)(-2,-8)(-2,-7)(-2,-6)(-2,-5)(-2,-4)(-2,-3)(-2,-2)(-2,-1)(-2,0)(-2,1)(-2,2)(-2,3)(-2,4)(-2,5)(-2,6)(-2,7)(-2,8)(-1,-16)(-1,-15)(-1,-14)(-1,-13)(-1,-12)(-1,-11)(-1,-10)(-1,-9)(-1,-8)(-1,-7)(-1,-6)(-1,-5)(-1,-4)(-1,-3)(-1,-2)(-1,-1)(-1,0)(-1,1)(-1,2)(-1,3)(-1,4)(-1,5)(-1,6)(-1,7)(-1,8)(-1,9)(-1,10)(-1,11)(-1,12)(-1,13)(-1,14)(-1,15)(-1,16)(0,-16)(0,-15)(0,-14)(0,-13)(0,-12)(0,-11)(0,-10)(0,-9)(0,-8)(0,-7)(0,-6)(0,-5)(0,-4)(0,-3)(0,-2)(0,-1)(0,0)(0,1)(0,2)(0,3)(0,4)(0,5)(0,6)(0,7)(0,8)(0,9)(0,10)(0,11)(0,12)(0,13)(0,14)(0,15)(0,16)(1,-16)(1,-15)(1,-14)(1,-13)(1,-12)(1,-11)(1,-10)(1,-9)(1,-8)(1,-7)(1,-6)(1,-5)(1,-4)(1,-3)(1,-2)(1,-1)(1,0)(1,1)(1,2)(1,3)(1,4)(1,5)(1,6)(1,7)(1,8)(1,9)(1,10)(1,11)(1,12)(1,13)(1,14)(1,15)(1,16)(2,-8)(2,-7)(2,-6)(2,-5)(2,-4)(2,-3)(2,-2)(2,-1)(2,0)(2,1)(2,2)(2,3)(2,4)(2,5)(2,6)(2,7)(2,8)(3,-5)(3,-4)(3,-3)(3,-2)(3,-1)(3,0)(3,1)(3,2)(3,3)(3,4)(3,5)(4,-4)(4,-3)(4,-2)(4,-1)(4,0)(4,1)(4,2)(4,3)(4,4)(5,-3)(5,-2)(5,-1)(5,0)(5,1)(5,2)(5,3)(6,-2)(6,-1)(6,0)(6,1)(6,2)(7,-2)(7,-1)(7,0)(7,1)(7,2)(8,-2)(8,-1)(8,0)(8,1)(8,2)(9,-1)(9,0)(9,1)(10,-1)(10,0)(10,1)(11,-1)(11,0)(11,1)(12,-1)(12,0)(12,1)(13,-1)(13,0)(13,1)(14,-1)(14,0)(14,1)(15,-1)(15,0)(15,1)(16,-1)(16,0)(16,1) 
	};
	\end{axis}
	\end{tikzpicture}
	\caption{The hyperbolic cross $I_{N}^{d}$ for $N=16$ and $d=2$.}
	\label{fig:HyperbolicCrosses}
\end{figure}
Furthermore, for $\beta \geq 0$ there are the Hilbert space $\mathcal{H}^{\beta}(\mathbb{T}^d)$ consisting of functions $f \in L_2(\mathbb{T}^d)$ with absolutely square-summable weighted Fourier coefficients $\omega_{\mathrm{hc}}(\mathbf k)\hat f_{\mathbf k}$, as defined in \eqref{def:HbetaRaum}.
For all $m\in\mathbb{N}$ it was shown in \cite{KuSiUl15} that
\begin{align} \label{eq:Hs_norm_equivalence}
	\|\cdot\|_{\mathcal{H}^{m}(\mathbb{T}^d)} 
	\sim \|\cdot\|_{H_{\mathrm{mix}}^{m}(\mathbb{T}^d)}.
\end{align}
Closely related are the function spaces $\mathcal{A}^{\beta}(\mathbb T^d), \beta \geq 0$ of $L_1(\mathbb T^d)$-functions with absolutely summable Fourier coefficients, as defined in \eqref{def:Aalphaspace}.
For $\beta = 0$ and the constant weight function $\omega_{\mathrm{hc}}(\mathbf{k}) \equiv 1$ we call the space $\mathcal{A}(\mathbb T^d) := \mathcal{A}^{0}(\mathbb T^d)$ the \emph{Wiener Algebra}.
As shown in \cite[Lemma~2.2]{KaPoVo13}, for $\beta\geq 0, \lambda>\frac{1}{2}$ and fixed $d\in\mathbb{N}$ there are the continuous embeddings
\begin{align}
	\label{eq:Wiener_algebra_inclusion}
	\mathcal{H}^{\beta+\lambda}(\mathbb{T}^d) \hookrightarrow
	\mathcal{A}^{\beta}(\mathbb T^d) \hookrightarrow
	\mathcal{A}(\mathbb T^d)
\end{align}
and for $f\in\mathcal{A}^{\beta}(\mathbb T^d)$ we have 
\begin{align}
	\label{eq:Wiener_algebra_inclusion2}
	\|f\|_{\mathcal{A}^{\beta}(\mathbb T^d)} \leq C_{d,\lambda} \|f\|_{\mathcal{H}^{\beta+\lambda}(\mathbb{T}^d)}
\end{align}
with a constant $C_{d,\lambda} := C(d,\lambda) > 1$.
Additionally, for each function in $\mathcal{A}(\mathbb T^d)$ there exists a continuous representative, as proven in \cite[Lemma~2.1]{kaemmererdiss}.
Later on, when we sample functions $f\in\mathcal{H}^{\beta+\lambda}(\mathbb T^d)$ we identify them with their continuous representatives given by their Fourier series $\sum_{\mathbf k\in \mathbb{Z}^d} \hat f_{\mathbf k}\,\mathrm{e}^{2\pi\mathrm i \mathbf k\mathbf \cdot \circ}$
and this identification will be denoted by ${f\in\mathcal{H}^{\beta+\lambda}(\mathbb{T}^d)\cap\mathcal{C}(\mathbb{T}^d)}$.

\subsection{{Rank-1} lattices and reconstructing {rank-1} lattices}
We recollect some objects and observations from \cite{SLKA87,CoKuNu10,kaemmererdiss} to discuss the approximation of functions ${f\in\mathcal{H}^{\beta}(\mathbb T^d)\cap\mathcal{C}(\mathbb{T}^d)}$.
For each frequency set $I \subset \mathbb{Z}^d$ there is the \textsl{difference set}
\begin{align*}
	\mathcal{D}(I) &:= \{\mathbf{k} \in \mathbb{Z}^d : \mathbf{k}=\mathbf{k}_1-\mathbf{k}_2 \text{ with } \mathbf{k}_1,\mathbf{k}_2 \in I \}.
\end{align*}
The set
\begin{align}
	\label{def:rank_one_lattice}
	\Lambda(\mathbf{z}, M) &:= \left\{ \mathbf{x}_j := \left(\frac{j}{M}\,\mathbf z \bmod \mathbf 1\right) \in \mathbb{T}^d: j = 0,1,\ldots M-1 \right\}
\end{align}
is called \textsl{{rank-$1$} lattice} with the \textsl{generating vector} $\mathbf{z}\in\mathbb{Z}^d$ and the \textsl{lattice size} $M \in \mathbb{N}$, where $\mathbf 1 := \left(1,\ldots,1\right)^{\top}\in\mathbb{Z}^d$.
A \textsl{reconstructing {rank-$1$} lattice} $\Lambda(\mathbf{z}, M, I)$ is a {rank-$1$} lattice $\Lambda(\mathbf{z}, M)$ for which the condition
\begin{align*}
\mathbf{t} \cdot \mathbf{z} \not\equiv 0 \,(\bmod{M}) \quad \text{for all } \mathbf{t}\in\mathcal{D}(I)\setminus\{\mathbf 0\} 
\end{align*}
holds.
Given a reconstructing {rank-$1$} lattice $\Lambda(\mathbf z,M,I)$ we have exact integration for all multivariate trigonometric polynomials $g \in \Pi_{\mathcal{D}(I)}$, see \cite{SLKA87}, so that
\begin{align*}
	\int_{\mathbb{T}^d} g(\mathbf x) \,\mathrm{d}\mathbf x 
	= \frac{1}{M} \sum_{j =0}^{M-1} g(\mathbf x_j), 
	\quad \mathbf x_j\in\Lambda(\mathbf z,M,I).
\end{align*}
In particular, for $f \in \Pi_{I}$ and $\mathbf k\in I$ we have $f(\circ)\,\mathrm{e}^{-2\pi\mathrm{i}\mathbf k\cdot\circ} \in \Pi_{\mathcal{D}(I)}$ and
\begin{align} 
	\label{eq:exact_integration_formula}
	\hat f_{\mathbf k}
	= \int_{\mathbb{T}^d} f(\mathbf x) \,\mathrm{e}^{-2\pi\mathrm{i}\mathbf k\cdot\mathbf x} \,\mathrm{d}\mathbf x 
	= \frac{1}{M} \sum_{j =0}^{M-1} f(\mathbf x_j) \,\mathrm{e}^{-2\pi\mathrm{i}\mathbf k\cdot\mathbf x_j}, 
	\quad \mathbf x_j\in\Lambda(\mathbf z,M,I).
\end{align}
For an arbitrary function ${f \in \mathcal{H}^{\beta}(\mathbb T^d)\cap\mathcal{C}(\mathbb{T}^d)}$ we lose the former mentioned exactness and define the \textsl{approximated Fourier coefficients} $\hat f_{\mathbf k}^\Lambda$ of the form
\begin{align*} \hat f_{\mathbf k}
	&\approx \hat f_{\mathbf k}^\Lambda
	:= \frac{1}{M} \sum_{j =0}^{M-1} f(\mathbf x_j)\,\mathrm{e}^{-2\pi\mathrm i \mathbf k\cdot\mathbf x_j},
	\quad\mathbf x_j \in \Lambda(\mathbf z,M,I),
\end{align*}
leading to the \textsl{approximated Fourier partial sum} $S_{I}^{\Lambda} f$ given by
\begin{align*}
	S_{I} f(\mathbf x)
	&\approx S_{I}^{\Lambda} f(\mathbf x)
	:= \sum_{\mathbf k\in I} \hat f_{\mathbf k}^\Lambda \,\mathrm{e}^{2\pi\mathrm i \mathbf k\cdot\mathbf x}. \end{align*}

\subsection{Lattice based approximation on the torus}
We reflect upper bounds for certain approximation errors $\left\|f - S_{I_N^d}^{\Lambda} f\right\|$ of functions $f$ in ${\mathcal{A}^{\beta}(\mathbb{T}^d)\cap\mathcal{C}(\mathbb{T}^d)}$ and ${\mathcal{H}^{\beta}(\mathbb{T}^d)\cap\mathcal{C}(\mathbb{T}^d)}$. 
For this matter, the existence of reconstructing {rank-$1$} lattices is secured by the arguments provided in \cite[Corollary~1]{Kae2013} and \cite[Theorem~2.1]{KaPoVo13}.
\begin{remark}
	We note that techniques using multiple {rank-$1$} lattices were recently suggested in \cite{Kae16,Kae17} and methods for the dimension incremental construction of unknown frequency set $I\subset\mathbb{Z}^d$ are presented in \cite{PoVo14}. 
	Furthermore, there is a dimensional incremental support identification technique based on randomly chosen sampling points that was recently developed in \cite{ChIwKr18}.
	Even though the transformation method is easily incorporated into both the multiple {rank-$1$} lattice methods as well as the component-by-component construction method, they won't be discussed any further in this work.
\end{remark}
Now, it's possible to prove an upper error bound for the $L_{\infty}$-approximation of functions in the subspace $\mathcal{A}^{\beta}(\mathbb{T}^d)$ of the Wiener Algebra, as seen in \cite[Theorem~3.3]{KaPoVo13}:
\begin{theorem} \label{thm:L_infty_approx_error_torus}
	Let $f \in \mathcal{A}^{\beta}(\mathbb{T}^d)\cap\mathcal{C}(\mathbb{T}^d)$ with $\beta\geq 0$ and $d\in\mathbb{N}$, 
	a hyperbolic cross $I_N^d$ with ${|I_N^d|<\infty}$ and $N\in\mathbb{N}$,
	and a reconstructing {rank-$1$} lattice ${\Lambda(\mathbf{z}, M, I_N^d)}$ be given.
	The approximation of $f$ by the approximated Fourier partial sum $S_{I_N^d}^{\Lambda} f$
	leads to an approximation error that is estimated by
	\begin{align} \label{eq:torus_infty_approx_bound}
		\left\| f - S_{I_N^d}^{\Lambda} f \right\|_{L_{\infty}(\mathbb{T}^d)}
		\leq 2 N^{-\beta} \|f\|_{\mathcal{A}^{\beta}(\mathbb{T}^d)}.
	\end{align}
\end{theorem}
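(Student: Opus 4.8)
The plan is to split the error via the triangle inequality into a truncation part and an aliasing part,
\[
\left\| f - S_{I_N^d}^{\Lambda} f \right\|_{L_{\infty}(\mathbb{T}^d)}
\leq \left\| f - S_{I_N^d} f \right\|_{L_{\infty}(\mathbb{T}^d)}
+ \left\| S_{I_N^d} f - S_{I_N^d}^{\Lambda} f \right\|_{L_{\infty}(\mathbb{T}^d)},
\]
and to bound each of the two summands separately by $N^{-\beta}\|f\|_{\mathcal{A}^{\beta}(\mathbb{T}^d)}$, which produces the factor $2$.

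For the truncation term I would use that $f\in\mathcal{A}^{\beta}(\mathbb{T}^d)$ guarantees absolute convergence of the Fourier series, so bounding each exponential by $1$ gives $\|f - S_{I_N^d}f\|_{L_{\infty}(\mathbb{T}^d)} \leq \sum_{\mathbf{k}\notin I_N^d}|\hat f_{\mathbf{k}}|$. Since $\omega_{\mathrm{hc}}(\mathbf{k}) > N$ for every $\mathbf{k}\notin I_N^d$, the elementary inequality $1\leq N^{-\beta}\omega_{\mathrm{hc}}(\mathbf{k})^{\beta}$ lets me insert the weight and read off the bound $N^{-\beta}\|f\|_{\mathcal{A}^{\beta}(\mathbb{T}^d)}$ directly from the definition of the norm in \eqref{def:Aalphaspace}.

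The aliasing term is the heart of the argument and the step I expect to be the main obstacle. First I would substitute the Fourier series of $f$ into the definition of $\hat f_{\mathbf{k}}^{\Lambda}$ and interchange the finite lattice sum with the absolutely convergent Fourier sum. Invoking the character-sum identity for rank-$1$ lattices, namely that $\frac{1}{M}\sum_{j=0}^{M-1}\mathrm{e}^{2\pi\mathrm i\mathbf{t}\cdot\mathbf{x}_j}$ equals $1$ when $\mathbf{t}\cdot\mathbf{z}\equiv 0\,(\mathrm{mod}\,M)$ and vanishes otherwise, this yields the aliasing formula
\[
\hat f_{\mathbf{k}}^{\Lambda}
= \hat f_{\mathbf{k}}
+ \sum_{\substack{\mathbf{h}\neq\mathbf{k}\\ (\mathbf{h}-\mathbf{k})\cdot\mathbf{z}\equiv 0\,(\mathrm{mod}\,M)}} \hat f_{\mathbf{h}}.
\]
Then I would apply the reconstructing property twice. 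First, any $\mathbf{h}\in I_N^d$ with $\mathbf{h}\neq\mathbf{k}$ yields $\mathbf{h}-\mathbf{k}\in\mathcal{D}(I_N^d)\setminus\{\mathbf 0\}$, so $(\mathbf{h}-\mathbf{k})\cdot\mathbf{z}\not\equiv 0\,(\mathrm{mod}\,M)$; hence every aliasing frequency lies outside $I_N^d$. Second, and crucially, each $\mathbf{h}\notin I_N^d$ can alias onto at most one index $\mathbf{k}\in I_N^d$: if two indices $\mathbf{k}_1,\mathbf{k}_2\in I_N^d$ both satisfied the congruence, then $(\mathbf{k}_1-\mathbf{k}_2)\cdot\mathbf{z}\equiv 0\,(\mathrm{mod}\,M)$ with $\mathbf{k}_1-\mathbf{k}_2\in\mathcal{D}(I_N^d)$, which by the reconstructing property forces $\mathbf{k}_1=\mathbf{k}_2$.

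With these two facts the double sum $\sum_{\mathbf{k}\in I_N^d}|\hat f_{\mathbf{k}}-\hat f_{\mathbf{k}}^{\Lambda}|$ collapses into a sum over distinct frequencies $\mathbf{h}\notin I_N^d$ and is therefore bounded above by $\sum_{\mathbf{h}\notin I_N^d}|\hat f_{\mathbf{h}}|$; the same weight-insertion trick as in the truncation step then bounds it by $N^{-\beta}\|f\|_{\mathcal{A}^{\beta}(\mathbb{T}^d)}$. Adding the two contributions completes the proof. The delicate point — and precisely the reason the constant is exactly $2$ rather than something growing with $|I_N^d|$ — is this injectivity argument, which ensures that every tail coefficient is counted only once across all reconstructed indices.
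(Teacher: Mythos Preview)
Your argument is correct and is precisely the standard proof of this result. The paper itself does not supply a proof of Theorem~\ref{thm:L_infty_approx_error_torus}; it merely quotes the statement from \cite[Theorem~3.3]{KaPoVo13}, where the argument is exactly the one you sketch: the triangle-inequality split into truncation plus aliasing, the aliasing formula obtained from the character-sum identity on the lattice, and the two consequences of the reconstructing property (aliasing frequencies lie outside $I_N^d$, and each such frequency aliases onto at most one index in $I_N^d$) that keep the constant at $2$. There is nothing to compare here beyond noting that your write-up reproduces the cited proof faithfully.
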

The approximation of functions in the Hilbert spaces $\mathcal{H}^{\beta}(\mathbb{T}^d)$ was investigated in \cite{Tem86} and later, more generally, in \cite{KaPoVo13}. 
It was shown that for all $\beta > 1$ there exists a reconstructing {rank-$1$} lattice generated by a vector of Korobov form $\mathbf z := (1,z,z^2,\ldots,z^{d-1})^{\top}\in\mathbb{Z}^d$ such that the $L_2$-truncation error is bounded above by
\begin{align*}
	\left\| f - S_{I_{N}^{d}}^{\Lambda}f \right\|_{L_{2}(\mathbb{T}^d)} 
	\leq N^{-\beta} (\log N)^{(d-1)/2} \|f\|_{\mathcal{H}^{\beta}(\mathbb{T}^d)}.
\end{align*}
A generalization of this estimate as well as an upper bound for the corresponding aliasing error can be found in \cite[Theorem~2]{ByKaUlVo16}, where dyadic hyperbolic cross frequency sets are used.
Furthermore, a component-by-component approach was applied to construct the generating vector $\mathbf z\in\mathbb{Z}^d$ which generally isn't of Korobov form anymore.
However, every dyadic hyperbolic cross is embedded in a non-dyadic one, see \cite[Lemma~2.29]{volkmerdiss}. 
Thus, the error estimates are easily translated in terms of non-dyadic hyperbolic crosses $I_{N}^{d}$, see \cite[Theorem~2.30]{volkmerdiss}.
We are interested in the following special case:
\begin{theorem}\label{eq:L_2_approximation_error_bound}
	Let $\beta > \frac{1}{2}$,
	$d\in\mathbb{N}$,
	$f\in\mathcal{H}^{\beta}(\mathbb{T}^d)\cap\mathcal{C}(\mathbb{T}^d)$,
	a hyperbolic cross $I_{N}^{d}$ with $N\geq 2^{d+1}$,
	and a reconstructing {rank-$1$} lattice $\Lambda(\mathbf z, M, I_{N}^{d})$ be given.
	Then we have
	\begin{align} \label{eq:H_beta_error_bound}
		\left\| f - S_{I_{N}^{d}}^{\Lambda}f \right\|_{L_{2}(\mathbb{T}^d)} 
		\leq C_{d,\beta} N^{-\beta} (\log N)^{(d-1)/2} \|f\|_{\mathcal{H}^{\beta}(\mathbb{T}^d)}
	\end{align}
	with some constant $C_{d,\beta} := C(d,\beta)>0$.
\end{theorem}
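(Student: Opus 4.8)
The plan is to take the given reconstructing rank-1 lattice $\Lambda(\mathbf z, M, I_N^d)$ and split the error orthogonally into a truncation part and an aliasing part, treating each separately. First I record that the hypotheses $\beta>\tfrac12$ and $f\in\mathcal H^{\beta}(\mathbb T^d)$ already place $f$ in the Wiener algebra: the embedding \eqref{eq:Wiener_algebra_inclusion} with smoothness index $0$ and $\lambda=\beta>\tfrac12$ gives $\mathcal H^{\beta}(\mathbb T^d)\hookrightarrow\mathcal A(\mathbb T^d)$, so the Fourier series of $f$ converges absolutely and represents its continuous representative pointwise; this legitimises all rearrangements below. Since every frequency of $S_{I_N^d}^{\Lambda}f$ lies in $I_N^d$, Parseval's identity yields the orthogonal decomposition
\begin{align*}
\left\|f-S_{I_N^d}^{\Lambda}f\right\|_{L_2(\mathbb T^d)}^{2}
= \underbrace{\sum_{\mathbf k\notin I_N^d}|\hat f_{\mathbf k}|^{2}}_{\text{truncation}}
+ \underbrace{\sum_{\mathbf k\in I_N^d}\left|\hat f_{\mathbf k}-\hat f_{\mathbf k}^{\Lambda}\right|^{2}}_{\text{aliasing}}.
\end{align*}
The truncation term is immediate from \eqref{def:HbetaRaum}: for $\mathbf k\notin I_N^d$ one has $\omega_{\mathrm{hc}}(\mathbf k)>N$, whence $\sum_{\mathbf k\notin I_N^d}|\hat f_{\mathbf k}|^{2}\le N^{-2\beta}\|f\|_{\mathcal H^{\beta}(\mathbb T^d)}^{2}$, contributing $N^{-\beta}\|f\|_{\mathcal H^{\beta}(\mathbb T^d)}$ with \emph{no} logarithmic factor.

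The heart of the argument is the aliasing term, and this is the only place where the reconstruction property is used. Inserting the absolutely convergent Fourier series of $f$ into the definition of $\hat f_{\mathbf k}^{\Lambda}$ and using $\mathbf x_j=\frac{j}{M}\mathbf z\bmod\mathbf 1$, the geometric sum $\frac1M\sum_{j=0}^{M-1}\mathrm e^{2\pi\mathrm i\frac{j}{M}(\mathbf h-\mathbf k)\cdot\mathbf z}$ equals $1$ when $(\mathbf h-\mathbf k)\cdot\mathbf z\equiv 0\ (\bmod M)$ and $0$ otherwise, so that
\begin{align*}
\hat f_{\mathbf k}^{\Lambda}=\hat f_{\mathbf k}+\sum_{\mathbf g\in\Lambda^{\perp}\setminus\{\mathbf 0\}}\hat f_{\mathbf k+\mathbf g},
\qquad \Lambda^{\perp}:=\{\mathbf g\in\mathbb Z^{d}:\mathbf g\cdot\mathbf z\equiv 0\ (\bmod M)\}.
\end{align*}
Two consequences of the reconstruction condition $\Lambda^{\perp}\cap(\mathcal D(I_N^d)\setminus\{\mathbf 0\})=\emptyset$ drive the estimate. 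First, for $\mathbf k\in I_N^d$ and $\mathbf g\in\Lambda^{\perp}\setminus\{\mathbf 0\}$ the aliased frequency $\mathbf k+\mathbf g$ cannot lie in $I_N^d$ (otherwise $\mathbf g\in\mathcal D(I_N^d)\setminus\{\mathbf 0\}$), so $\omega_{\mathrm{hc}}(\mathbf k+\mathbf g)>N$. Second, a fixed $\mathbf h\notin I_N^d$ can be written as $\mathbf h=\mathbf k+\mathbf g$ for \emph{at most one} $\mathbf k\in I_N^d$, since two such indices would differ by a nonzero element of $\Lambda^{\perp}\cap\mathcal D(I_N^d)$; hence the aliasing index sets are pairwise disjoint.

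Applying Cauchy--Schwarz to each aliasing sum, separating the weight $\omega_{\mathrm{hc}}(\mathbf k+\mathbf g)^{\beta}$, summing over $\mathbf k\in I_N^d$, and exploiting the disjointness to collect the second factors into a single sum over distinct frequencies gives
\begin{align*}
\sum_{\mathbf k\in I_N^d}\left|\hat f_{\mathbf k}-\hat f_{\mathbf k}^{\Lambda}\right|^{2}
\le C_{\Lambda}\sum_{\mathbf h\in\mathbb Z^{d}}\omega_{\mathrm{hc}}(\mathbf h)^{2\beta}|\hat f_{\mathbf h}|^{2}
= C_{\Lambda}\,\|f\|_{\mathcal H^{\beta}(\mathbb T^d)}^{2},
\qquad
C_{\Lambda}:=\max_{\mathbf k\in I_N^d}\sum_{\mathbf g\in\Lambda^{\perp}\setminus\{\mathbf 0\}}\omega_{\mathrm{hc}}(\mathbf k+\mathbf g)^{-2\beta}.
\end{align*}
Combined with the truncation bound this yields $\|f-S_{I_N^d}^{\Lambda}f\|_{L_2(\mathbb T^d)}\le(N^{-2\beta}+C_{\Lambda})^{1/2}\|f\|_{\mathcal H^{\beta}(\mathbb T^d)}$. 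Everything up to here is valid for an \emph{arbitrary} reconstructing rank-1 lattice.

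The main obstacle, and the only source of the factor $(\log N)^{(d-1)/2}$, is the scalar bound $C_{\Lambda}\le C_{d,\beta}^{2}\,N^{-2\beta}(\log N)^{d-1}$. I stress that reconstruction by itself only forces $\omega_{\mathrm{hc}}(\mathbf k+\mathbf g)>N$ and does not control how many dual points $\mathbf k+\mathbf g$ accumulate just outside the hyperbolic cross. I would establish the required bound by counting the coset points $(\mathbf k+\Lambda^{\perp})\cap(I_{2^{l+1}N}^{d}\setminus I_{2^{l}N}^{d})$ in dyadic shells, using that at most one coset point lies in $I_N^d$, together with the hyperbolic-cross asymptotics $|I_R^d|\sim R(\log R)^{d-1}$ and the necessary cardinality bound $M\gtrsim|I_N^d|$, and then summing the resulting geometric-type series, which converges precisely because $\beta>\tfrac12$; the threshold $N\ge 2^{d+1}$ guarantees that these cardinality asymptotics, and hence the constant $C_{d,\beta}$, are valid. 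This quantitative control of the aliasing constant $C_{\Lambda}$ for the given reconstructing lattice is exactly the content supplied by \cite[Theorem~2]{ByKaUlVo16} and \cite[Theorem~2.30]{volkmerdiss}, and inserting it into the previous display gives \eqref{eq:H_beta_error_bound}. I expect this shell-counting estimate to be the delicate step, whereas the orthogonal split, the aliasing formula, and the disjointness are routine once the reconstruction property is invoked.
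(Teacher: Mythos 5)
Your proposal is correct in its reductions, but note first that the paper itself does not prove this theorem at all: it is recalled from the literature, namely from \cite[Theorem~2]{ByKaUlVo16} (stated there for dyadic hyperbolic crosses) and transferred to the non-dyadic crosses $I_N^d$ via \cite[Lemma~2.29]{volkmerdiss} and \cite[Theorem~2.30]{volkmerdiss}. What you have reconstructed is the internal skeleton of that cited proof, and every step of it is sound: the Parseval split into truncation and aliasing parts, the dual-lattice identity $\hat f_{\mathbf k}^{\Lambda}=\hat f_{\mathbf k}+\sum_{\mathbf g\in\Lambda^{\perp}\setminus\{\mathbf 0\}}\hat f_{\mathbf k+\mathbf g}$ (legitimate since $\beta>\tfrac12$ places $f$ in the Wiener algebra, so the series may be inserted pointwise), the two consequences of the reconstruction property (aliases leave $I_N^d$, and the aliased index sets are pairwise disjoint), and the Cauchy--Schwarz reduction to the constant $C_{\Lambda}$; you also correctly isolate that the logarithm and the hypothesis $\beta>\tfrac12$ enter only through the bound on $C_{\Lambda}$. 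One caution about the counting step you defer: your ingredient list, read literally as (number of coset points of $\mathbf k+\Lambda^{\perp}$ in a dyadic shell) $\approx$ (shell cardinality)$/M$ with $M\gtrsim|I_N^d|\sim N(\log N)^{d-1}$, would yield $C_{\Lambda}\lesssim N^{-2\beta}$ with \emph{no} logarithmic factor, which cannot be the right mechanism given that \cite{ByKaUlVo16} establish the $(\log N)^{(d-1)/2}$ rate as tight. The actual argument partitions each dyadic block at level $\log_2 N+s$ into roughly $2^{s}$ boxes whose difference sets lie in $\mathcal{D}(I_N^d)$, so that the reconstruction property forces \emph{at most one coset point per box}; this gives about $2^{s}(\log_2 N+s)^{d-1}$ aliases per shell, and summing the resulting series (convergent precisely for $\beta>\tfrac12$) produces $C_{\Lambda}\lesssim N^{-2\beta}(\log N)^{d-1}$. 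Since you hand exactly this step to the same references on which the paper relies, your proposal meets the paper's own standard, but be aware that this counting lemma is the real content of the theorem rather than a routine appendix.
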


\section{Torus-to-cube transformation mappings}
Changes of variables were discussed for example in \cite{boyd00,ShTaWa11} and were used for high-dimensional integration \cite{KuWaWa06,KPPW18}.
We define torus-to-cube transformations ${\psi:\left[-\frac{1}{2},\frac{1}{2}\right]^d\to\left[-\frac{1}{2},\frac{1}{2}\right]^d}$ and compare some examples.
We introduce parameterized families of such torus-to-interval transformations, some of which are induced by invertible transformations ${\tilde{\psi}:(-\frac{1}{2},\frac{1}{2})^d\to\mathbb{R}^d}$ that were discussed in \cite{boyd00,ShTaWa11,NaPo18}.
We provide important examples that will reappear later in this paper. 
Afterwards, we introduce the weighted Hilbert spaces $L_{2}\left(\left[-\frac{1}{2},\frac{1}{2}\right]^d,\omega\right)$ with the integrable weight function $\omega:\left[-\frac{1}{2},\frac{1}{2}\right]^d\to[0,\infty)$ and investigate their structure.
Subsequently, we prove sufficient $L_{\infty}$-conditions on $\psi$ and $\omega$ which guarantee that the composition of a test function 
$h\in L_2\left(\left[-\frac{1}{2},\frac{1}{2}\right]^d,\omega\right)\cap \mathcal{C}_{\mathrm{mix}}^{m}\left(\left[-\frac{1}{2},\frac{1}{2}\right]^d\right)$ with a transformation $\psi$ 
yields a smooth function $f \in \mathcal H^{m}(\mathbb{T}^d)$. 
We transfer the already established error bounds with respect to the $L_{\infty}(\mathbb{T}^d)$- and the $L_{2}(\mathbb{T}^d)$-norms recalled in Theorems~\ref{thm:L_infty_approx_error_torus} and \ref{eq:L_2_approximation_error_bound} by means of the transformation $\psi$ and obtain specific approximation error bounds for function $h$ defined on the cube $\left[-\frac{1}{2},\frac{1}{2}\right]^d$.

\subsection{Torus-to-cube transformations}
We call a mapping
\begin{align}\label{def:Trafo_cube}
	\psi:\left[-\frac{1}{2},\frac{1}{2}\right] \to \left[-\frac{1}{2},\frac{1}{2}\right]
	\quad \text{with} \quad 
	\lim\limits_{x \to \pm\frac{1}{2}}\psi(x) = \pm\frac{1}{2}
\end{align}
a \textsl{torus-to-cube transformation} if it is continuously differentiable, increasing and has the first derivative $\psi'(x) := \frac{\mathrm{d}}{\mathrm{d}x}[\psi](x) \in\mathcal{C}(\mathbb{T})$.
Its inverse transformation is also continuously differentiable, increasing and is denoted by $\psi^{-1}:[-\frac{1}{2},\frac{1}{2}]\to [-\frac{1}{2},\frac{1}{2}]$ in the sense of $y=\psi(x) \Leftrightarrow x=\psi^{-1}(y)$ with $\psi^{-1}(y) \to \pm\frac{1}{2}$ as $y\to \pm\frac{1}{2}$.
We call the derivative of the inverse transformation the \emph{density function $\varrho$ of $\psi$}, which is a non-negative $L_1$-function on the interval $[-\frac{1}{2},\frac{1}{2}]$ and given by
\begin{align}\label{def:Varrho_cube}
	\varrho(y) := (\psi^{-1})'(y) = \frac{1}{\psi'(\psi^{-1}(y))}.
\end{align}
For multivariate transformations we put 
\begin{align}\label{def:Trafo_cube_mult}
	\psi(\mathbf x) := ( \psi_1(x_1),\ldots,\psi_d(x_d) )^{\top}
\end{align}
with ${\mathbf x = (x_1,\ldots,x_d)^{\top}\in[-\frac{1}{2},\frac{1}{2}]^d}$
and we may use different univariate torus-to-cube transformations $\psi_j$ in each coordinate.
The multivariate inverse transformation is denoted by ${\psi^{-1}(\mathbf y) := ( \psi_1^{-1}(y_1),\ldots,\psi_d^{-1}(y_d) )^{\top}}$ and 
\begin{align} \label{def:varrho_mult_cube}
	{\varrho(\mathbf y) := \prod_{j=1}^d \varrho_j(y_j)}
\end{align}
for all $\mathbf y = (y_1,\ldots,y_d)^{\top}\in[-\frac{1}{2},\frac{1}{2}]^d$.

We introduce a particular family of parameterized torus-to-cube transformations as defined in $\eqref{def:Trafo_cube}$ that are based on transformations $\tilde{\psi}$ to $\mathbb{R}$ whose definition is recalled from \cite{NaPo18}.
We call a continuously differentiable, increasing and odd mapping $\tilde{\psi}:(-\frac{1}{2},\frac{1}{2}) \to \mathbb{R}$ with $\tilde{\psi}(x) \to \pm\infty$ for $x \to \pm\frac{1}{2}$ a \textsl{transformation to $\mathbb{R}$}.
We obtain parameterized torus-to-cube transformations ${\psi(\cdot,\eta):[-\frac{1}{2},\frac{1}{2}] \to [-\frac{1}{2},\frac{1}{2}]}$ with $\eta\in\mathbb{R}_{+} := (0,\infty)$ by putting
\begin{align} \label{def:Trafo_comb}
	\psi(x, \eta) := 
	\begin{cases}
		\tilde{\psi}^{-1}(\eta \,\tilde{\psi}(x)) & \text{for}\quad x\in\left(-\frac{1}{2},\frac{1}{2}\right), \\
		\pm \frac{1}{2} & \text{for}\quad x=\pm\frac{1}{2}.
	\end{cases}
\end{align}
These transformations form a subset of all torus-to-cube transformations and are in a natural way continuously differentiable and increasing.
The respective first derivative and inverse torus-to-cube transformation are given by 
\begin{align*}
	\psi'(x,\eta) := \frac{\partial}{\partial x}[\psi](x, \eta) 
\quad \text{and} \quad
	\psi^{-1}(y, \eta) := \tilde{\psi}^{-1}\left(\frac{1}{\eta} \,\tilde{\psi}(y)\right).
\end{align*}
The corresponding density functions $\varrho(\circ,\eta)$ and $\varrho(\circ,\bm\eta)$ as well as the multivariate torus-to-cube transformation $\psi(\circ,\bm\eta)$ and its inverse $\psi^{-1}(\circ,\bm\eta)$ with $\bm\eta\in\mathbb{R}_{+}^{d}$ are simply parameterized versions of \eqref{def:Trafo_cube}, \eqref{def:Varrho_cube} and \eqref{def:Trafo_cube_mult} and share the same properties.

\subsection{Exemplary transformations}
In \cite[Section~17.6]{boyd00}, \cite[Section~7.5]{ShTaWa11} and \cite{NaPo18} we find various suggestions for transformations to $\mathbb{R}$.
We are particularly interested in the transformation
\begin{align} \label{eq:logarithmic_trafo}
		\tilde{\psi}(x) 
		&= \frac{1}{2}\log\left(\frac{1+2x}{1-2x}\right) 
		= \tanh^{-1}(2x)
\end{align}
based on the $\log$-function and the transformation
\begin{align} \label{eq:error_function_trafo}
	\tilde{\psi}(x) &= \mathrm{erf}^{-1}(2x)
\end{align}
based on the inverse of the error function 
\begin{align}\label{def:erf_fct}
	\mathrm{erf}(y) = \frac{1}{\sqrt{\pi}} \int_{-y}^{y} \mathrm{e}^{-t^2} \,\mathrm{d}t, \quad y\in\mathbb{R}.
\end{align}
Both transformations \eqref{eq:logarithmic_trafo} and \eqref{eq:error_function_trafo} induce a parameterized torus-to-cube transformation $\psi\left(y,\eta\right)$ with ${\eta > 0}$ as in \eqref{def:Trafo_comb}. 
It holds
${\psi^{-1}(y,\eta) = \psi\left(y,\frac{1}{\eta}\right)}$ and 
${\varrho(y,\eta) = \psi'\left(y,\frac{1}{\eta}\right)}$. 
For $x,y\in[-\frac{1}{2},\frac{1}{2}]$ we have the following torus-to-cube transformations:
\begin{itemize}
	\item
	\textsl{logarithmic transformation}:
	\begin{align} \label{eq:logarithmic_trafo_comb}
		\psi(x,\eta) = \frac{1}{2}\,\frac{(1+2x)^\eta - (1-2x)^\eta}{(1+2x)^\eta + (1-2x)^\eta},
		\quad \psi'(x,\eta) = \frac{4\eta(1-4x^2)^{\eta-1}}{\left((1+2x)^\eta+(1-2x)^\eta\right)^2},\end{align}
	and we observe that $\lim_{x \to \pm\frac{1}{2}} \psi'(x,\eta) = 0$ for $\eta>1$.
\item 
	\textsl{error function transformation}:
	\begin{align} \label{eq:error_function_trafo_comb}
		\psi(x,\eta) = \frac{1}{2}\,\mathrm{erf}(\eta\,\mathrm{erf}^{-1}(2x)) ,
		\quad \psi'(x,\eta) = \eta\,\mathrm{e}^{(1-\eta^2)(\mathrm{erf}^{-1}(2x))^2} \end{align}
	with the error function $\mathrm{erf}(\circ)$ as given in \eqref{def:erf_fct}, and $\mathrm{erf}^{-1}$ denoting the inverse error function.
	Again, we observe that $\lim_{x \to \pm\frac{1}{2}} \psi'(x,\eta) = 0$ for $\eta>1$.
\end{itemize}
We list an example for a torus-to-cube transformation $\psi:[-\frac{1}{2},\frac{1}{2}] \to [-\frac{1}{2},\frac{1}{2}]$ as defined in \eqref{def:Trafo_cube} that isn't induced by a transformation to $\mathbb{R}$:
\begin{itemize}
	\item 
	\textsl{sine transformation}:
	\begin{align} \label{eq:sine_trafo}
		\psi(x) &= \frac{1}{2}\,\sin(\pi x) = \frac{1}{2}\,\cos\left(\pi\left(x-\frac{1}{2}\right)\right) ,
		\quad \psi'(x) = \frac{\pi}{2} \cos(\pi x).\end{align}
\end{itemize}
Later on, we compare the limited smoothening effect of this particular transformation on any given test function $h\in L_{2}\left(\left[-\frac{1}{2},\frac{1}{2}\right]^d,\omega\right) \cap \mathcal{C}_{\mathrm{mix}}^{m}\left(\left[-\frac{1}{2},\frac{1}{2}\right]^d\right)$ with the logarithmic transformation \eqref{eq:logarithmic_trafo_comb}, for which we can achieve much more smoothness if the parameter $\eta\in\mathbb{R}_{+}$ is large enough.
In Figure~\ref{fig:combined_trafos_and_sin_compared} we compare the transformation mapping, its inverse and their derivatives of the logarithmic transformation~\eqref{eq:logarithmic_trafo_comb} for $\eta\in\{2,4\}$ and the sine transformation \eqref{eq:sine_trafo}.

\begin{figure}[]
	\begin{minipage}[b]{.245\linewidth}
		\centering
		\begin{tikzpicture}[scale=0.4]
		\pgfmathsetmacro\M{2}
		\pgfmathsetmacro\MM{4}
		\begin{axis}[
			samples=500,  
			xmin=-0.55, xmax=0.55, 
			ymin=-0.55, ymax=0.55,
			xtick = {-0.5,-0.25,0,0.25,0.5}, ytick = {-0.5,-0.25,0,0.25,0.5},
			title = {$\psi$},
			axis x line=center, axis y line=center,
			every axis plot/.append style={thick},
]
		\addplot[black, domain=-0.5:0.5] { (1/2)*sin(pi*deg(x))  };
\addplot[blue, dashdotted, domain=-0.5:0.5] { (1/2)*((1+2*x)^\M - (1-2*x)^\M)/((1+2*x)^\M + (1-2*x)^\M) };
\addplot[black, dotted, domain=-0.5:0.5] { (1/2)*((1+2*x)^\MM - (1-2*x)^\MM)/((1+2*x)^\MM + (1-2*x)^\MM) };
\end{axis}
		\end{tikzpicture}
	\end{minipage}
	\begin{minipage}[b]{.245\linewidth}
		\centering
		\begin{tikzpicture}[scale=0.4]
		\pgfmathsetmacro\M{2}
		\pgfmathsetmacro\MM{4}
		\begin{axis}[
			samples=1000,  
			xmin=-0.55, xmax=0.55, ymin=-0.55, ymax=0.55,
			xtick = {-0.5,-0.25,0,0.25,0.5}, ytick = {-0.5,-0.25,0,0.25,0.5},
			title = {$\psi^{-1}$},
			axis x line=center, axis y line=center,
			every axis plot/.append style={thick},
			legend style={at={(1,1.25)}, anchor=south,legend columns=1,legend cell align=left, font=\small}
		]
		\addplot[black, domain=-0.5:0.5] { (1/pi)*rad(asin(2*x))  };
		\addlegendentry{\eqref{eq:sine_trafo} sine transformation};
		\addplot[blue, dashdotted, domain=-0.5:0.5] { (1/2)*((1+2*x)^(1/\M) - (1-2*x)^(1/\M))/((1+2*x)^(1/\M) + (1-2*x)^(1/\M)) };
		\addlegendentry{\eqref{eq:logarithmic_trafo_comb} logarithmic transformation with $\eta=2$};
		\addplot[black, dotted, domain=-0.5:0.5] { (1/2)*((1+2*x)^(1/\MM) - (1-2*x)^(1/\MM))/((1+2*x)^(1/\MM) + (1-2*x)^(1/\MM)) };
		\addlegendentry{\eqref{eq:logarithmic_trafo_comb} logarithmic transformation with $\eta=4$};
		\end{axis}
		\end{tikzpicture}
	\end{minipage}
	\begin{minipage}[b]{.245\linewidth}
		\centering
		\begin{tikzpicture}[scale=0.4]
		\pgfmathsetmacro\M{2}
		\pgfmathsetmacro\MM{4}
		\begin{axis}[
			samples=500,  
			xmin=-0.55, xmax=0.55, ymin=-0.5, ymax=4.25,
			xtick = {-0.5,-0.25,0,0.25,0.5},
			title = {$\psi'$},
			axis x line=center, axis y line=center,
			every axis plot/.append style={thick},
]  
\addplot[black, domain=-0.5:0.5] { (pi/2)*cos(pi*deg(x))  };
\addplot[blue, dashdotted, domain=-0.5:0.5] { 4*\M*((1-4*x^2)^(\M-1))/(((1+2*x)^\M+(1-2*x)^\M)^2) };
\addplot[black, dotted, domain=-0.5:0.5] { 4*\MM*((1-4*x^2)^(\MM-1))/(((1+2*x)^\MM+(1-2*x)^\MM)^2) };
\end{axis}
		\end{tikzpicture}
	\end{minipage}
	\begin{minipage}[b]{.245\linewidth}
		\centering
		\begin{tikzpicture}[scale=0.4]
		\pgfmathsetmacro\M{2}
		\pgfmathsetmacro\MM{4}
		\begin{axis}[
			samples=500,  
			xmin=-0.55, xmax=0.55, ymin=-0.5, ymax=4.25,
			xtick = {-0.5,-0.25,0,0.25,0.5},
			title = {$\varrho = (\psi^{-1})'$},
			axis x line=center, axis y line=center,
			every axis plot/.append style={thick},
]
\addplot[black, domain=-0.5:0.5] { (2/pi)*1/(sqrt(1-4*x^2))  };
\addplot[blue, dashdotted, domain=-0.495:0.495] { 4*(1/\M)*((1-4*x^2)^((1/\M)-1))/(((1+2*x)^(1/\M)+(1-2*x)^(1/\M))^2) };
\addplot[black, dotted, domain=-0.495:0.495] { 4*(1/\MM)*((1-4*x^2)^((1/\MM)-1))/(((1+2*x)^(1/\MM)+(1-2*x)^(1/\MM))^2) };
\end{axis}
		\end{tikzpicture}
	\end{minipage}
	\caption{Comparison of the logarithmic transformation \eqref{eq:logarithmic_trafo_comb} with $\eta\in\{2,4\}$ and the sine transformation~\eqref{eq:sine_trafo}.
	}
	\label{fig:combined_trafos_and_sin_compared}
\end{figure}

\subsection{Weighted Hilbert spaces on the cube}
We describe the structure of the weighted function spaces $L_2\left([-\frac{1}{2},\frac{1}{2}],\omega\right)$ as defined in $\eqref{def:weighted_L2_space}$ for $d=1$.
The weight function $\omega:[-\frac{1}{2},\frac{1}{2}]\to[0,\infty)$ remains unspecified in this section.
Later on we may consider families of parameterized integrable weight functions $\omega(\circ,\mu)$ with ${\mu\in\mathbb{R}_{+}}$ to control the smoothness of functions in ${L_2\left([-\frac{1}{2},\frac{1}{2}],\omega(\circ,\mu)\right)\cap \mathcal{C}^{m}\left([-\frac{1}{2},\frac{1}{2}]\right)}$ and of the corresponding transformed functions as in \eqref{eq:change_of_variables_first} on the torus $\mathbb{T}$.
Families of multivariate parameterized weight functions are defined as 
\begin{align}
	\label{eq:omega_weighted_mult}
	\omega(\mathbf y, \bm \mu) := \prod_{j=1}^{d}\omega_j(y_j,\mu_j), \quad \mathbf y\in \left[-\frac{1}{2},\frac{1}{2}\right]^d, \bm\mu\in\mathbb{R}_{+}^d,
\end{align}
with univariate weight functions $\omega_j(\circ,\mu_j):[-\frac{1}{2},\frac{1}{2}]\to[0,\infty)$.

For now, we simplify the notation of the transformation, the weight function, and all related functions by omitting any parameter and just writing $\psi(\circ), \omega(\circ),$ etc. 
We remain in the univariate setting.
The system $\left\{\varphi_{k}\right\}_{k\in\mathbb{Z}}$ of weighted exponential functions
\begin{align}\label{eq:transformed_basis_functions}
	\varphi_{k}(y)
	:= \sqrt{\frac{\varrho(y)}{\omega(y)}} \, \mathrm{e}^{2\pi\mathrm i k\psi^{-1}(y)}, \quad y\in\left[-\frac{1}{2},\frac{1}{2}\right]
\end{align}
forms an orthonormal system with respect to the scalar product
\begin{align} \label{def:weighted_scalar_product}
	(h_1, h_2)_{L_2\left(\left[-\frac{1}{2},\frac{1}{2}\right], \omega \right)}
	:= \int_{-\frac{1}{2}}^{\frac{1}{2}} h_1(y) \, \overline{h_2(y)} \, \omega(y) \, \mathrm dy
\end{align}
and for all $k_1,k_2\in\mathbb{Z}$ we have
\begin{align*}
	(\varphi_{k_1}, \varphi_{k_2})_{L_2\left(\left[-\frac{1}{2},\frac{1}{2}\right], \omega\right)}
	= \delta_{k_1, k_2}.
\end{align*}
The weighted scalar product \eqref{def:weighted_scalar_product} induces the norm
\begin{align*}
	\|h\|_{L_2\left(\left[-\frac{1}{2},\frac{1}{2}\right], \omega \right)}
	:= \sqrt{ (h, h)_{L_2\left(\left[-\frac{1}{2},\frac{1}{2}\right], \omega \right)} }.
\end{align*}
In a natural way we have Fourier coefficients of the form
\begin{align} \label{def:FouCoeff_of_h}
	\hat h_{k}
	:= \left(h, \varphi_{k} \right)_{L_2\left(\left[-\frac{1}{2},\frac{1}{2}\right], \omega \right)}
	= \int_{-\frac{1}{2}}^{\frac{1}{2}} h(y) \, \sqrt{\varrho(y)\,\omega(y)} \, \mathrm{e}^{-2\pi\mathrm i k\psi^{-1}(y)} \, \mathrm dy,
\end{align}
as well as the respective Fourier partial sum for $I\subset\mathbb{Z}$ given by
\begin{align} \label{def:Fourier_part_sum_of_h}
	S_{I}h(y) 
	:= \sum_{k\in I} \hat h_{k} \, \varphi_{k}(y).
\end{align}

\subsection{Smoothness properties of transformed functions in $\mathcal{H}^{m}(\mathbb{T}^d)$}
In this section we characterize the smoothness properties of functions $h$ defined on $\left[-\frac{1}{2},\frac{1}{2}\right]^d$ and of their corresponding transformed versions $f$ as in \eqref{eq:f_is_transformed_h_mult} 
on $\left[-\frac{1}{2},\frac{1}{2}\right]^d$ after the application of a torus-to-cube transformation $\psi$ given in \eqref{def:Trafo_comb}.
We investigate the possibility to continuously extend these transformed functions $f$ to the torus $\mathbb{T}^d$.
We propose specific sufficient conditions for $\psi$ and $\omega$ such that the eventual transformed functions $f$ are in $\mathcal H^{m}\left(\mathbb{T}^d\right)$ with ${m\in\mathbb{N}_{0}}$.
These conditions are stated for both univariate and multivariate functions.
Afterwards, we utilize the embedding ${\mathcal{H}^{\beta+\lambda}(\mathbb{T}^d) \hookrightarrow \mathcal{A}^{\beta}(\mathbb T^d)}$ in \eqref{eq:Wiener_algebra_inclusion} for all ${\lambda > \frac{1}{2}}$ to discuss high-dimensional approximation problems, in which we apply {rank-$1$} lattice based fast Fourier approximation methods. 
Throughout this section we still omit the parameters $\bm\eta,\bm\mu\in\mathbb{R}_{+}^d$ in the notation of the torus-to-cube transformations $\psi$ and of the weight functions $\omega$.

For now, we consider univariate transformed functions $f\in L_{2}\left(\left[-\frac{1}{2},\frac{1}{2}\right]\right)$ of the form 
\begin{align}\label{eq:f_is_transformed_h}
	f(x) 
	:= h(\psi(x)) \, \sqrt{ \omega(\psi(x)) \, \psi'(x) },
	\quad x\in\left[-\frac{1}{2},\frac{1}{2}\right],
\end{align}
that are the result of applying a torus-to-cube transformation $y = \psi(x)$ as defined in \eqref{def:Trafo_cube} to the $L_2\left(\left[-\frac{1}{2},\frac{1}{2}\right],\omega\right)$-norm of the given function $h$ so that we have the identity
\begin{align*} 
	\|h\|_{L_{2}\left(\left[-\frac{1}{2},\frac{1}{2}\right],\omega\right)}^2
	&= \int_{-\frac{1}{2}}^{\frac{1}{2}} |h(y)|^2 \, \omega(y) \,\mathrm{d}y \\
	&= \int_{-\frac{1}{2}}^{\frac{1}{2}} \left| h(\psi(x)) \right|^2 \, \omega(\psi(x)) \, \psi'(x) \, \mathrm{d}x
	= \|f\|_{L_{2}\left(\left[-\frac{1}{2},\frac{1}{2}\right]\right)}^2.
\end{align*}
This is illustrated schematically in Figure~\ref{fig:DiagrammTrafo2}.
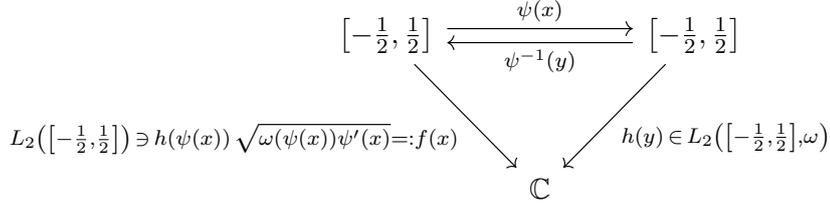
\begin{figure}[]
	\begin{center}
\begin{tikzpicture}[baseline= (a).base]
		\node[scale=1] (a) at (0,0)
		{
			\begin{tikzcd}[row sep=huge]
			 \left[-\frac{1}{2},\frac{1}{2}\right] \arrow[shift left]{rr}{\psi(x)} \arrow{dr}[swap]{ L_{2}\left(\left[-\frac{1}{2},\frac{1}{2}\right]\right) \,\ni\, h(\psi(x))\,\sqrt{\omega(\psi(x)) \psi'(x)} =: f(x) }
			& & \left[-\frac{1}{2},\frac{1}{2}\right] \arrow{dl}{h(y) \,\in\, L_{2}\left(\left[-\frac{1}{2},\frac{1}{2}\right], \omega \right)} \arrow[shift left]{ll}{\psi^{-1}(y)} & 
			\phantom{\mathbb{T} \simeq [-\frac{1}{2},\frac{1}{2}]  \arrow[l, phantom]} \\
			& \mathbb{C} & &
			\end{tikzcd}
		};
		\end{tikzpicture}
	\end{center}
	\caption{Scheme of the relation between $f$ and $h$ caused by a transformation $\psi$.}
	\label{fig:DiagrammTrafo2}
\end{figure}

Generally, it is rather difficult to check if such transformed functions $f$ are elements of $H^{m}\left(\left[-\frac{1}{2},\frac{1}{2}\right]\right)$ for some fixed $m\in\mathbb{N}_0$ by estimating the individual $L_{2}\left(\left[-\frac{1}{2},\frac{1}{2}\right]\right)$-norms within the Sobolev norm $\|f\|_{H^{m}\left(\left[-\frac{1}{2},\frac{1}{2}\right]\right)}$.
We propose a set of sufficient conditions such that ${f\in H^{m}\left(\left[-\frac{1}{2},\frac{1}{2}\right]\right)}$ with $m\in\mathbb{N}_0$
that eliminate the necessity to evaluate the $L_{2}$-integrals of various derivatives of $f$ and utilize the product structure of the functions $f$ in \eqref{eq:f_is_transformed_h}.
When we use parameterized families of torus-to-cube transformations $\psi(\circ,\eta)$ and families of weight functions $\omega(\circ,\mu)$,
we will calculate how large the parameters $\eta,\mu\in\mathbb{R}_{+}$ have to be in order to preserve the fixed degree of smoothness $m$ when transforming ${h\in L_{2}\left(\left[-\frac{1}{2},\frac{1}{2}\right],\omega\right) \cap \mathcal{C}^{m}\left(\left[-\frac{1}{2},\frac{1}{2}\right]\right)}$ into $f\in H^{m}\left(\left[-\frac{1}{2},\frac{1}{2}\right]\right)$ via $\psi(\circ,\eta)$.
By additionally assuming a certain vanishing behavior of the derivatives of the transformed weight function $\sqrt{ (\omega(\psi(\circ)) \, \psi'(\circ) }$ the transformed functions $f$ are continuously extendable to the torus $\mathbb{T}$ and we finally have smooth transformed functions $f\in \mathcal H^{m}(\mathbb{T})$ due to the norm equivalence \eqref{eq:Hs_norm_equivalence}.

\begin{remark}
	In the univariate setting we need a continuous function $f$ with ${f(-\frac{1}{2}) = f(\frac{1}{2})}$, which reads as
	\begin{align*}
		0 = \textstyle h(-\frac{1}{2})\sqrt{\omega(-\frac{1}{2}) \psi'(-\frac{1}{2})} - h(\frac{1}{2})\sqrt{\omega(\frac{1}{2}) \psi'(\frac{1}{2})},
	\end{align*}
	after recalling the we have $\psi\left(\pm\frac{1}{2}\right) = \pm\frac{1}{2}$.
	One approach to achieve this equality is to choose transformations $\psi$ whose first derivative $\psi'$ converges to $0$ at $x = \pm\frac{1}{2}$ fast enough that it isn't counteracted by the function $h$ or the weight function $\omega$. 
	Hence, we assume that $\sqrt{\omega(\psi(\circ)) \psi'(\circ)}\in\mathcal{C}_{0}\left(\left[-\frac{1}{2},\frac{1}{2}\right]\right)$.
	We focus on this approach, even though there are obviously more ways to achieve the above equality.
	In higher dimensions we assume for all $\mathbf m\in\mathbb{N}_{0}^{d}$ with $\|\mathbf m\|_{\ell_{\infty}} \leq m$ that $D^{\mathbf m}\left[\prod_{j=1}^{d}\sqrt{ (\omega_j\circ\psi_j) \,\psi_j' }\right] \in \mathcal{C}_{0}\left(\left[-\frac{1}{2},\frac{1}{2}\right]^d\right)$.

	Later on, we will choose a constant weight function $\omega \equiv 1$ and make use of the logarithmic transformation~\eqref{eq:logarithmic_trafo_comb} or the error transformation~\eqref{eq:error_function_trafo_comb} for the purpose of achieving this behavior of the transformed functions $f$ at the boundary points.
	While their first derivatives $\psi'(\circ,\eta)$ are always $0$ at the boundary points, the parameter $\eta$ has to be sufficiently large to achieve the same property for higher derivatives.
\end{remark}

Now, we propose a set of sufficient univariate conditions such that we obtain smooth transformed functions $f\in \mathcal 
H^{m}(\mathbb{T})$.
We denote the $k$-th derivative of a function $f(x)$ with respect to $x$ by one of the equivalent expressions ${f^{(k)}(x) = \frac{\mathrm{d}^{k}}{\mathrm{d}x^{k}}[f](x)}$, and for $k=1$ we continue to use the notation $f'(x)$.
\begin{theorem} \label{thm:Cm_composition_criteria}
	Let $m\in\mathbb{N}_0$,  
	$\psi$ as in \eqref{def:Trafo_cube}, 
	${h\in L_2\left(\left[-\frac{1}{2},\frac{1}{2}\right],\omega\right)\cap \mathcal{C}^{m}\left(\left[-\frac{1}{2},\frac{1}{2}\right]\right)}$ with an integrable weight function $\omega:\left[-\frac{1}{2},\frac{1}{2}\right]\to[0,\infty)$ 
	and the corresponding transformed functions $f$ of the form \eqref{eq:f_is_transformed_h} be given.

	We have $f \in \mathcal H^{m}\left(\mathbb{T}\right)$ if for all $n=0,1,\ldots,m$ we have 
	\begin{align} \label{eq:Cm_composition_criteria}
{\psi\in\mathcal{C}^{m}\left(\left[-\frac{1}{2},\frac{1}{2}\right]\right)}
		\quad \text{and} \quad 
		\left( \sqrt{ (\omega\circ\psi) \, \psi' } \right)^{(n)}(\circ)\in \mathcal{C}_{0}\left(\left[-\frac{1}{2},\frac{1}{2}\right]\right).
	\end{align}
\end{theorem}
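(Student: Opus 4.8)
The plan is to reduce the assertion to showing that the univariate $f$ extends to a function in $H^{m}(\mathbb{T})$, after which the univariate norm equivalence \eqref{eq:Hs_norm_equivalence} (in which $H_{\mathrm{mix}}^{m}(\mathbb{T})=H^{m}(\mathbb{T})$) immediately yields $f\in\mathcal{H}^{m}(\mathbb{T})$. To this end I would write $f$ as the product $f=(h\circ\psi)\cdot g$ with the transformed weight factor $g:=\sqrt{(\omega\circ\psi)\,\psi'}$ and treat the two factors separately. A useful feature of the hypothesis \eqref{eq:Cm_composition_criteria} is that it is phrased directly in terms of the derivatives $g^{(n)}$, so no separate argument for the smoothness of the square-root composition is needed.

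First I would establish that $f\in\mathcal{C}^{m}([-\tfrac12,\tfrac12])$ on the closed interval. Since $h\in\mathcal{C}^{m}$ and $\psi\in\mathcal{C}^{m}$ by hypothesis, the higher-order chain rule (Fa\`a di Bruno) gives $h\circ\psi\in\mathcal{C}^{m}$, with all derivatives up to order $m$ continuous and hence bounded on the compact interval. The second factor satisfies $g\in\mathcal{C}^{m}$ directly, because the hypothesis asserts $g^{(n)}\in\mathcal{C}_{0}\subset\mathcal{C}$ for every $n=0,\ldots,m$. The Leibniz product rule then yields $f\in\mathcal{C}^{m}$, and in particular $f\in H^{m}([-\tfrac12,\tfrac12])$ since all derivatives are bounded on a compact interval.

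The key step is the boundary behavior. Applying the Leibniz rule to the $k$-th derivative for any $k\le m$ gives $f^{(k)}=\sum_{j=0}^{k}\binom{k}{j}(h\circ\psi)^{(j)}\,g^{(k-j)}$. Evaluating at $x=\pm\tfrac12$, each summand carries a factor $g^{(k-j)}(\pm\tfrac12)$ with index $k-j\le k\le m$, which vanishes because $g^{(k-j)}\in\mathcal{C}_{0}$; since the competing factor $(h\circ\psi)^{(j)}$ is bounded, every summand is zero, so $f^{(k)}(\pm\tfrac12)=0$ for all $k=0,\ldots,m$. Consequently the matching conditions $f^{(k)}(-\tfrac12)=f^{(k)}(\tfrac12)$ hold for all $k=0,\ldots,m$ (both sides being $0$), so the $1$-periodic extension of $f$ belongs to $\mathcal{C}^{m}(\mathbb{T})\subset H^{m}(\mathbb{T})$.

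Combining these steps, the periodic extension of $f$ lies in $H^{m}(\mathbb{T})=H_{\mathrm{mix}}^{m}(\mathbb{T})$, and the norm equivalence \eqref{eq:Hs_norm_equivalence} completes the argument. I expect the main obstacle to be the careful bookkeeping in the Leibniz expansion: one must recognize that it suffices for \emph{every} derivative of $g$ up to order $m$, and not merely $g$ itself, to vanish at the endpoints, which is precisely what the $\mathcal{C}_{0}$-membership of $g^{(n)}$ supplies, and one must match this against the boundedness but not necessarily vanishing of $h\circ\psi$ and its derivatives.
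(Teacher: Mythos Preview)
Your proposal is correct and follows essentially the same approach as the paper: both arguments factor $f=(h\circ\psi)\cdot g$, invoke Fa\`a di Bruno for the composition and Leibniz for the product, use boundedness of the derivatives of $h\circ\psi$ together with the $\mathcal{C}_0$-assumption on $g^{(n)}$, and finish with the norm equivalence \eqref{eq:Hs_norm_equivalence}. Your route is slightly more streamlined in that you pass directly through $\mathcal{C}^{m}([-\tfrac12,\tfrac12])\subset H^{m}([-\tfrac12,\tfrac12])$, whereas the paper writes out the $L_2$-norm bounds explicitly via $L_\infty$-estimates, but the underlying argument is the same.
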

\begin{proof}
	For $h\in L_{2}\left(\left[-\frac{1}{2},\frac{1}{2}\right],\omega\right) \cap \mathcal{C}^{m}\left(\left[-\frac{1}{2},\frac{1}{2}\right]\right)$ with $m\in\mathbb{N}_{0}$ and a torus-to-cube transformation $\psi$ as defined in $\eqref{def:Trafo_cube}$
	we consider the function $f$ as given in \eqref{eq:f_is_transformed_h}.
	At first we check if $f\in H^{m}\left(\left[-\frac{1}{2},\frac{1}{2}\right]\right)$ and have to show that $\left\| f^{(n)}(\circ) \right\|_{L_2\left(\left[-\frac{1}{2},\frac{1}{2}\right]\right)} < \infty$ for all $n=0,1,\ldots,m$. 
	
	We apply the \emph{generalized Leibniz rule} for the $n$-th derivative of a product of functions
	\begin{align*} (f\cdot g)^{(n)}(x)
		= \sum_{k=0}^{n} \binom{n}{k} f^{(k)}(x) \,g^{(n-k)}(x)
	\end{align*}
	to the Sobolev norm of $f$, which leads to
	\begin{align} \label{eq:Leibniz_estimate}
		&\|f\|_{H^{m}\left(\left[-\frac{1}{2},\frac{1}{2}\right]\right)} ^2
		=  
		\sum_{n=0}^{m} \| f^{(n)}(\circ) \|_{L_{2}\left(\left[-\frac{1}{2},\frac{1}{2}\right]\right)}^{2} 
		\nonumber \\
		&\leq 
		\sum_{n=0}^{m} \left( 
			\sum_{k=0}^{n} \binom{n}{k} \left\| (h\circ\psi)^{(k)}(\circ) 
			\left( \sqrt{ (\omega\circ\psi) \, \psi' } \right)^{(n-k)}(\circ) \right\|_{L_{2}\left(\left[-\frac{1}{2},\frac{1}{2}\right]\right)}
		\right)^{2}.
	\end{align}
	
	We leave $h\circ\psi$ in the term corresponding to $k=0$ untouched for now. 
	For $k = 1,\ldots,m$ we use the \emph{Fa\'{a} di Bruno} formula to write the $k$-th derivative of the composition of functions $h$ and $\psi$ as
	\begin{align} \label{eq:FdiB_formula}
		( h\circ\psi )^{(k)}(x)
		= \sum_{\ell = 1}^{k} h^{(\ell)}(\psi(x))\, B_{k,\ell}(\psi'(x), \psi^{(2)}(x),\ldots, \psi^{(k-\ell+1)}(x))
	\end{align}
	with $(h\circ\psi)^{(0)}(x) = h(\psi(x))$
	and the well-known Bell polynomials $B_{k,\ell}$ for $k,\ell\in\mathbb{N}_{0}$ are given by
	\begin{align*}
B_{k,\ell}(\mathbf z) := \sum_{ \substack{j_1+j_2+\ldots+j_{k-\ell+1}=\ell,\\ j_1+2j_2+\ldots+(k-\ell+1)j_{k-\ell+1} = k} }
		\frac{\ell!}{j_1! \cdot \ldots \cdot j_{k-\ell+1}!}
		\prod_{m=1}^{k-\ell+1} \left(\frac{z_{m}}{m!}\right)^{j_{m}}
	\end{align*}
	with $\mathbf z = (z_1,\ldots,z_{k-\ell+1})^{\top}$. 
	By assumption all derivatives of $\psi$ are bounded on the interval $[-\frac{1}{2},\frac{1}{2}]$. 
	Hence, each Bell polynomial $B_{k,\ell}$ in \eqref{eq:FdiB_formula} is bounded, too.
	To simplify the notation we write $B_{k,\ell}(\psi(x)) := B_{k,\ell}(\psi'(x),\ldots, \psi^{(k-\ell+1)}(x))$. We insert \eqref{eq:FdiB_formula} into \eqref{eq:Leibniz_estimate} and estimate
	\begin{align*}&\|f\|_{H^{m}\left(\left[-\frac{1}{2},\frac{1}{2}\right]\right)}^2 \nonumber\\
		&\lesssim \sum_{n=0}^{m} \left( 
			\sum_{k=0}^{n} 
			\left\| 
				\sum_{\ell = 1}^{k} h^{(\ell)}(\psi(\circ)) B_{k,\ell}(\psi(\circ)) \left( \sqrt{ (\omega\circ\psi) \, \psi' } \right)^{(n-k)}(\circ)
			\right\|_{L_{2}\left(\left[-\frac{1}{2},\frac{1}{2}\right]\right)}
		\right)^{2}.
	\end{align*}
	The appearing $L_2$-norms are estimated by their respective $L_{\infty}$-norms, so that
	\begin{align*}
		&\left\| \sum_{\ell = 1}^{k} h^{(\ell)}(\psi(\circ)) B_{k,\ell}(\psi(\circ)) \left( \sqrt{ (\omega\circ\psi) \, \psi' } \right)^{(n-k)}(\circ) \right\|_{L_{2}\left(\left[-\frac{1}{2},\frac{1}{2}\right]\right)}  \\
		&\lesssim 
		\sum_{\ell = 1}^{k} 
		\left\| \left(\sqrt{ (\omega\circ\psi) \, \psi' } \right)^{(n-k)}(\circ) \right\|_{L_{\infty}\left(\left[-\frac{1}{2},\frac{1}{2}\right]\right)},
	\end{align*}
	because of the boundedness of all appearing Bell polynomials $B_{k,\ell}$ and the assumption that $h$ is $m$-times continuously differentiable.
	Thus, the norm $\|f\|_{H^{m}\left(\left[-\frac{1}{2},\frac{1}{2}\right]\right)}$ is finite, if the first $m$ derivatives of $\sqrt{ (\omega(\psi(\circ)) \, \psi'(\circ) }$ have a fine $L_{\infty}$-norm.
	Finally, we assumed that the first $m$ derivatives of $\sqrt{ (\omega(\psi(\circ)) \, \psi'(\circ) }$ also vanish at the boundary points, which implies that the first $m$ derivatives of the transformed function $f$ vanish at the boundary points, too.
	Hence, $f$ is in $\mathcal H^{m}\left(\mathbb{T}\right)$ due to the norm equivalence \eqref{eq:Hs_norm_equivalence}.
\end{proof}

Next, we prove the multivariate version of Theorem~\ref{thm:Cm_composition_criteria}.
Similarly to \eqref{eq:f_is_transformed_h}, we consider multivariate transformed functions $f\in L_{2}\left(\left[-\frac{1}{2},\frac{1}{2}\right]^d\right)$ of the form 
\begin{align}\label{eq:f_is_transformed_h_mult}
	f(\mathbf x) 
	= h(\psi_{1}(x_1),\ldots,\psi_{d}(x_d)) \prod_{k=1}^{d}\sqrt{\omega_k(\psi_k(x_k)) \psi'_{k}(x_k)},
\end{align}
with $\mathbf x = (x_1,\ldots,x_d)^{\top}\in\left[-\frac{1}{2},\frac{1}{2}\right]^d$
that are the result of applying the multivariate transformation 
\begin{align*}
	\mathbf y = (y_1,\ldots,y_d)^{\top} = (\psi_{1}(x_{1}),\ldots,\psi_{d}(x_{d}))^{\top} = \psi(\mathbf x)
\end{align*}
as defined in \eqref{def:Trafo_cube_mult} to a function $h\in L_2\left(\left[-\frac{1}{2},\frac{1}{2}\right]^d,\omega\right)$ with a product weight $\omega$ as in \eqref{eq:omega_weighted_mult}.
For these we have the identity
\begin{align*} 
	\|h\|_{L_2\left(\left[-\frac{1}{2},\frac{1}{2}\right]^d,\omega\right)}^2
	&= \int_{\left[-\frac{1}{2},\frac{1}{2}\right]^d} |h(\mathbf y)|^2 \omega(\mathbf y) \,\mathrm{d}\mathbf y \\
	&= \int_{\mathbb{T}^d} \!\!|(h\circ\psi)(\mathbf x)|^2 (\omega\circ\psi)(\mathbf x)\! \prod_{j=1}^{d}\psi'_{j}(x_j) \, \mathrm{d}\mathbf x
	= \|f\|_{L_{2}\left(\left[-\frac{1}{2},\frac{1}{2}\right]^d\right)}^2. \nonumber
\end{align*}
Again, we derive a set of sufficient $L_{\infty}$-conditions on the transformation $\psi$ and the product weight $\omega$ 
for an $h\in L_{2}\left(\left[-\frac{1}{2},\frac{1}{2}\right]^d, \omega\right) \cap \mathcal{C}_{\mathrm{mix}}^{m}\left(\left[-\frac{1}{2},\frac{1}{2}\right]^d\right)$ to be transformed by $\psi$ into an $f \in \mathcal H^{m}\left(\mathbb{T}^d\right)$ of form \eqref{eq:f_is_transformed_h_mult}. 
\begin{theorem} \label{thm:Cm_composition_criteria_mult}
	Let $d\in\mathbb{N}$, 
	$m\in\mathbb{N}_{0}$, 
	a $d$-variate $\psi$, 
	a product weight function $\omega$
	as in \eqref{eq:omega_weighted_mult},
	${h\in L_2\left(\left[-\frac{1}{2},\frac{1}{2}\right]^d,\omega\right)\cap \mathcal{C}_{\mathrm{mix}}^{m}\left(\left[-\frac{1}{2},\frac{1}{2}\right]^d\right)}$
	and the corresponding transformed functions $f$ of the form \eqref{eq:f_is_transformed_h_mult} be given.
	
	We have ${f \in \mathcal H^{m}\left(\mathbb{T}^d\right)}$ 
	if for all multi-indices ${\mathbf m \in\mathbb{N}_{0}^{d}, \|\mathbf m\|_{\ell_{\infty}} \leq m}$, 
we have 
	\begin{align} \label{eq:Cm_composition_criteria_mult}
{\psi\in\mathcal{C}_{\mathrm{mix}}^{m}\left(\left[-\frac{1}{2},\frac{1}{2}\right]^d\right)} 
		\quad \text{and} \quad 
D^{\textbf{m}}\left[\prod_{k=1}^{d} \sqrt{(\omega_k\circ\psi_k) \psi'_{k}}\right]\in \mathcal{C}_{0}\left(\left[-\frac{1}{2},\frac{1}{2}\right]^d\right).
	\end{align}
\end{theorem}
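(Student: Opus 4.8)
The plan is to transcribe the univariate argument of Theorem~\ref{thm:Cm_composition_criteria} almost verbatim, exploiting that both the transformation $\psi$ and the weight $\omega$ act coordinatewise, so that every multivariate differentiation formula factors into products of its one-dimensional counterparts. Writing $W(\mathbf x) := \prod_{k=1}^{d} \sqrt{(\omega_k\circ\psi_k)(x_k)\,\psi_k'(x_k)}$, we have the product decomposition $f=(h\circ\psi)\cdot W$, and the goal is first to show $f\in H_{\mathrm{mix}}^{m}\left(\left[-\frac12,\frac12\right]^d\right)$ by bounding $\left\|D^{\mathbf n}[f]\right\|_{L_2\left(\left[-\frac12,\frac12\right]^d\right)}$ for every multi-index $\mathbf n\in\mathbb N_0^d$ with $\|\mathbf n\|_{\ell_\infty}\le m$, and then to promote this to the torus via the norm equivalence~\eqref{eq:Hs_norm_equivalence}.

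First I would apply the multivariate Leibniz rule to $f=(h\circ\psi)\cdot W$. Since $W=\prod_k W_k(x_k)$ is a tensor product of univariate functions and $D^{\mathbf n}=\prod_k \partial_{x_k}^{n_k}$, this gives
\[
D^{\mathbf n}[f] = \sum_{\mathbf j\le \mathbf n} \binom{\mathbf n}{\mathbf j}\, D^{\mathbf j}[h\circ\psi]\; D^{\mathbf n-\mathbf j}[W],
\]
with $\mathbf j\le\mathbf n$ and $\binom{\mathbf n}{\mathbf j}$ understood componentwise. Next I would expand $D^{\mathbf j}[h\circ\psi]$ by a coordinatewise Fa\`a di Bruno formula: because $\psi(\mathbf x)=(\psi_1(x_1),\dots,\psi_d(x_d))$, the operator $\partial_{x_k}^{j_k}$ differentiates through $\psi_k$ only, so
\[
D^{\mathbf j}[h\circ\psi](\mathbf x) = \sum_{\boldsymbol\ell} (D^{\boldsymbol\ell}h)(\psi(\mathbf x))\,\prod_{k=1}^d B_{j_k,\ell_k}\bigl(\psi_k'(x_k),\dots,\psi_k^{(j_k-\ell_k+1)}(x_k)\bigr),
\]
where the sum runs over Fa\`a di Bruno multi-indices $\boldsymbol\ell$ with $\ell_k\le j_k$ (and $\ell_k\ge1$ whenever $j_k\ge1$).

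The decisive point is then to verify boundedness exactly as in the univariate case. Each Bell polynomial $B_{j_k,\ell_k}$ is a polynomial in the derivatives $\psi_k',\dots,\psi_k^{(j_k-\ell_k+1)}$ of orders at most $m$, hence bounded since $\psi\in\mathcal{C}_{\mathrm{mix}}^{m}\left(\left[-\frac12,\frac12\right]^d\right)$; moreover every $\boldsymbol\ell$ occurring satisfies $\|\boldsymbol\ell\|_{\ell_\infty}\le\|\mathbf j\|_{\ell_\infty}\le m$, so $(D^{\boldsymbol\ell}h)\circ\psi$ is bounded because $h\in\mathcal{C}_{\mathrm{mix}}^{m}\left(\left[-\frac12,\frac12\right]^d\right)$. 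Thus $D^{\mathbf j}[h\circ\psi]\in L_\infty$ for every $\mathbf j\le\mathbf n$, and estimating each $L_2$-norm by the corresponding $L_\infty$-norm reduces the whole bound to a finite sum of the quantities $\left\|D^{\mathbf n-\mathbf j}[W]\right\|_{L_\infty\left(\left[-\frac12,\frac12\right]^d\right)}$, which are finite because $\|\mathbf n-\mathbf j\|_{\ell_\infty}\le m$ and $D^{\mathbf n-\mathbf j}[W]\in\mathcal{C}_0\left(\left[-\frac12,\frac12\right]^d\right)$ by assumption~\eqref{eq:Cm_composition_criteria_mult}. This establishes $f\in H_{\mathrm{mix}}^{m}\left(\left[-\frac12,\frac12\right]^d\right)$.

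Finally, to pass to the torus I would reuse the same Leibniz expansion: in each of its terms the factor $D^{\mathbf n-\mathbf j}[W]$ lies in $\mathcal{C}_0\left(\left[-\frac12,\frac12\right]^d\right)$ while $D^{\mathbf j}[h\circ\psi]$ is continuous and bounded, so every $D^{\mathbf n}[f]$ with $\|\mathbf n\|_{\ell_\infty}\le m$ is continuous and vanishes on the boundary $\left[-\frac12,\frac12\right]^d\setminus\left(-\frac12,\frac12\right)^d$. Hence the periodic extension of $f$ and of all its mixed derivatives up to order $m$ agree (both being zero) across opposite faces of the cube, so $f$ extends to an element of $H_{\mathrm{mix}}^{m}(\mathbb{T}^d)$, and the norm equivalence~\eqref{eq:Hs_norm_equivalence} gives $f\in\mathcal H^{m}(\mathbb{T}^d)$. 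I expect the main obstacle to be purely the bookkeeping in the multivariate Fa\`a di Bruno step—organizing the nested sums over $\boldsymbol\ell\le\mathbf j\le\mathbf n$ and confirming that each appearing derivative of $h$ and of $\psi$ respects the constraint $\|\cdot\|_{\ell_\infty}\le m$—rather than any new analytic difficulty beyond the univariate situation.
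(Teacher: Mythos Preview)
Your proposal is correct and follows essentially the same approach as the paper: a coordinatewise Leibniz expansion of $f=(h\circ\psi)\cdot W$, followed by a coordinatewise Fa\`a di Bruno formula for $D^{\mathbf j}[h\circ\psi]$, then $L_\infty$-bounds on the Bell polynomials and on $D^{\boldsymbol\ell}h$ to reduce everything to the assumed $\mathcal{C}_0$-control of $D^{\mathbf n-\mathbf j}[W]$, and finally the boundary-vanishing argument plus the norm equivalence~\eqref{eq:Hs_norm_equivalence} to pass from $H_{\mathrm{mix}}^m$ on the cube to $\mathcal H^m(\mathbb T^d)$. Your write-up is in fact slightly more explicit than the paper's about why the boundary vanishing of each $D^{\mathbf n}[f]$ suffices for the periodic extension.
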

\begin{proof}
	For $h\in L_2\left(\left[-\frac{1}{2},\frac{1}{2}\right]^d,\omega\right)\cap \mathcal{C}_{\mathrm{mix}}^{m}\left(\left[-\frac{1}{2},\frac{1}{2}\right]^d\right)$ with $m\in\mathbb{N}_{0}$ and a multivariate torus-to-cube transformation $\psi$ as defined in $\eqref{def:Trafo_cube_mult}$
	we consider the transformed function $f$ as given in \eqref{eq:f_is_transformed_h_mult}.
	At first we check if $f\in H_{\mathrm{mix}}^{m}\left(\left[-\frac{1}{2},\frac{1}{2}\right]^d\right)$ and have to show that for all multi-indices ${\mathbf m \in\mathbb{N}_{0}^{d}}$ with $\|\mathbf m\|_{\ell_{\infty}} \leq m$ we have $\left\| D^{\mathbf m}[f] \right\|_{L_2\left(\left[-\frac{1}{2},\frac{1}{2}\right]^d\right)} < \infty$. 
	
	Let $\mathbf m= (m_1,\ldots,m_d)^{\top}\in\mathbb{N}_{0}^{d}$ be any multi-index with $\|\mathbf m\|_{\ell_{\infty}} \leq m$. 
	For a multivariate transformed function $f$ of the form \eqref{eq:f_is_transformed_h_mult} we have
	\begin{align}\label{eq:mult_proof_L2T}
		\left\| D^{\mathbf m}[f](\mathbf x) \right\|_{L_2\left(\left[-\frac{1}{2},\frac{1}{2}\right]^d\right)}^2 
		= 
		\int_{\left[-\frac{1}{2},\frac{1}{2}\right]^d} \left| D^{\mathbf m}\left[(h\circ\psi)\prod_{k=1}^{d}\sqrt{(\omega_k\circ\psi_k) \psi'_{k}}\right](\mathbf x) 
		\right|^2 \mathrm{d}\mathbf x.
	\end{align}
	Due to the product weight function in the transformed function $f$ in \eqref{eq:f_is_transformed_h_mult}
	the componentwise application of the Leibniz formula as in \eqref{eq:Leibniz_estimate} we estimate
\begin{align} \label{eq:mult_proof_leibniz_applied}
		&D^{\mathbf m}\left[ (h\circ\psi)\prod_{k=1}^{d}\sqrt{(\omega_k\circ\psi_k)\psi'_{k}}\right](\mathbf x) \\
		&\leq \sum_{j_1=0}^{m_1} 
		\ldots \sum_{j_d=0}^{m_d} 
		D^{(j_1,\ldots,j_d)}[h\circ \psi](\mathbf x) \,  D^{(m_1-j_1,\ldots,m_d-j_d)} \left[\prod_{k=1}^{d}\sqrt{(\omega_k\circ\psi_k)\psi'_{k}}\right](\mathbf x). \nonumber
	\end{align}
	Next, we apply the {Fa\'{a} di Bruno} formula \eqref{eq:FdiB_formula} to each univariate $j_{\ell}$-th derivative occurring in the term $D^{(j_1,\ldots,j_d)}[h\circ \psi](\mathbf x)$ in \eqref{eq:mult_proof_leibniz_applied}.
	For all ${\ell}=1,\ldots,d$ we put $B_{j_{\ell},i_{\ell}}(\psi_{\ell}(x_{\ell})) := B_{j_{\ell},i_{\ell}}(\psi'_{\ell}(x_{\ell}),\ldots, \psi^{(j_{\ell}-i_{\ell}+1)}_{\ell}(x_{\ell}))$ and we have
	\begin{align}
	 &D^{(0,\ldots,0,j_\ell,0,\ldots,0)}[h\circ \psi](\mathbf x) \nonumber \\
	 &=
	 \begin{cases} \label{eq:mult_proof_FdB}
	 	h(\psi(\mathbf x)) &\quad\text{for } j_{\ell}=0, \\
	 	\displaystyle \sum_{i_{\ell}=1}^{j_{\ell}} D^{(0,\ldots,0,i_{\ell},0,\ldots,0)}[h](\psi(\mathbf x)) B_{j_{\ell},i_{\ell}}(\psi_{\ell}(x_{\ell})) &\quad\text{for } j_{\ell}\in\mathbb{N}.
	 \end{cases}
	\end{align}
	We combine the norm $\left\| D^{\mathbf m}[f] \right\|_{L_2\left(\left[-\frac{1}{2},\frac{1}{2}\right]^d\right)}$ in $\eqref{eq:mult_proof_L2T}$ 
	with the expression resulting from applying the Leibniz formula to $D^{\mathbf m}[f]$ in $\eqref{eq:mult_proof_leibniz_applied}$ and the subsequent application of the {Fa\'{a} di Bruno} formula in \eqref{eq:mult_proof_FdB}.
	Then we estimate the occurring summands by their $L_2$-norm and by their $L_\infty$-norm 
	and utilize the boundedness of the Bell polynomials $B_{j_{\ell},i_{\ell}}$ as well as the assumption that $h$ is a $\mathcal{C}_{\mathrm{mix}}^m$-function,
	so that we end up with
	\begin{align*} 
		&\left\| D^{\mathbf m}[f](\mathbf x) \right\|_{L_2\left(\left[-\frac{1}{2},\frac{1}{2}\right]^d\right)} \\
		&\lesssim \sum_{j_1=0,\ldots,j_d=0}^{m_1,\ldots,m_d} \sum_{i_1=1,\ldots,i_d=1}^{j_1,\ldots,j_d}
		\left( \int_{\left[-\frac{1}{2},\frac{1}{2}\right]^d}
			| D^{(i_1,\ldots,i_d)}[h](\psi(\mathbf x))|^2 \right.\times \\
		&\quad\times	
			\left. 
		 \prod_{{\ell}=1}^{d} 
		 	|B_{j_{\ell},i_{\ell}}(\psi_{\ell}(x_{\ell}))|^2 
			\left| D^{(m_1-j_1,\ldots,m_d-j_d)}\left[\prod_{k=1}^{d}\sqrt{(\omega_k\circ\psi_k)\psi'_{k}}\right](\mathbf x) \right|^2
		\,\mathrm{d}\mathbf x
		\right)^{\frac{1}{2}}\nonumber \\
		&\lesssim \sum_{j_1=0,\ldots,j_d=0}^{m_1,\ldots,m_d} \left\| D^{(m_1-j_1,\ldots,m_d-j_d)}\left[\prod_{k=1}^{d}\sqrt{(\omega_k\circ\psi_k)\psi'_{k}}\right](\circ) \right\|_{L_{\infty}\left(\left[-\frac{1}{2},\frac{1}{2}\right]^d\right)}.\nonumber
	\end{align*}
	By assumption, the derivatives 
	${D^{\mathbf m}\left[\prod_{k=1}^{d}\sqrt{ (\omega_k\circ\psi_k) \psi_k'}\right]}$ vanish at the boundary points
	for all $\mathbf m\in\mathbb{N}_{0}^{d}, \|\mathbf m\|_{\ell_{\infty}} \leq m$.
	Thus, the derivatives $D^{\mathbf m}[f]$ of the transformed function $f$ at the boundary points, too, and $f$ is in $\mathcal H^{m}\left(\mathbb{T}^d\right)$ due to the equivalence \eqref{eq:Hs_norm_equivalence}.
\end{proof}

\subsection{Approximation of transformed functions}
We establish two specific approximation error bounds for functions defined on $\left[-\frac{1}{2},\frac{1}{2}\right]^d$ based on the approximation error bounds on the torus $\mathbb{T}^d$ that we recalled in Theorems~\ref{thm:L_infty_approx_error_torus} and \ref{eq:L_2_approximation_error_bound}.
The corresponding proofs rely heavily on the previously introduced sufficient conditions in Theorem~\ref{thm:Cm_composition_criteria_mult} which guarantee that functions ${h\in L_2\left(\left[-\frac{1}{2},\frac{1}{2}\right]^d,\omega\right)\cap \mathcal{C}_{\mathrm{mix}}^{m}\left(\left[-\frac{1}{2},\frac{1}{2}\right]^d\right)}$ are transformed into Sobolev functions of dominated mixed smoothness on $\mathbb{T}^d$ of the form \eqref{eq:f_is_transformed_h_mult} by multivariate transformations $\psi:\left[-\frac{1}{2},\frac{1}{2}\right]^d\to\left[-\frac{1}{2},\frac{1}{2}\right]^d$ as given in \eqref{def:Trafo_cube_mult}.

Based on the definition of a {rank-$1$} lattice $\Lambda(\mathbf z,M)$ in \eqref{def:rank_one_lattice}, we define a \textsl{transformed {rank-$1$} lattice} as 
\begin{align}\label{eq:Def_trafo_lattice}
	\Lambda_{\psi}(\mathbf z,M) := \left\{ \mathbf y_j := \psi(\mathbf x_j) : \mathbf x_j\in\Lambda(\mathbf z,M), j = 0,\ldots,M-1 \right\}.
\end{align}
A \emph{transformed reconstructing {rank-$1$} lattice} is denoted by $\Lambda_{\psi}(\mathbf z,M,I)$.
Based on the functions $\varphi_{k}$ given in \eqref{eq:transformed_basis_functions} we put 
\begin{align*}
	{\varphi_{\mathbf k}(\mathbf y) := \prod_{j=1}^{d}\varphi_{k_j}(y_j)}.
\end{align*}
Similarly to \eqref{def:weighted_scalar_product} and \eqref{def:FouCoeff_of_h}, the multivariate weighted $L_{2}\left(\left[-\frac{1}{2},\frac{1}{2}\right]^d,\omega\right)$-scalar product reads as
\begin{align*} 
	(h_1, h_2)_{L_2\left(\left[-\frac{1}{2},\frac{1}{2}\right]^d, \omega \right)}
	:= \int_{\left[-\frac{1}{2},\frac{1}{2}\right]^d} h_1(\mathbf y) \, \overline{h_2(\mathbf y)} \, \prod_{j=1}^{d}\omega_j(y_j) \, \mathrm d\mathbf y
\end{align*}
and the multivariate Fourier coefficients $\hat h_{\mathbf k}$ are naturally given with respect to this scalar product reads as
\begin{align} \label{def:FC_trafo_multivar}
	\hat h_{\mathbf k} 
	= (h, \varphi_{\mathbf k})_{L_2\left(\left[-\frac{1}{2},\frac{1}{2}\right]^d, \omega \right)}.
\end{align}
As in \eqref{def:Fourier_part_sum_of_h}, we define the multivariate Fourier partial sum for any $I\subset\mathbb{Z}^d$ as
\begin{align*}
	S_{I}h(\mathbf y)
	:= \sum_{\mathbf k\in I} \hat h_{\mathbf k} \, \varphi_{\mathbf k}(\mathbf y).
\end{align*}
Suppose $f\in L_{2}\left(\mathbb{T}^d\right)$. 
For each $I\subset\mathbb{Z}^d$ the system $\left\{\varphi_{\mathbf k}\right\}_{\mathbf k\in I}$ spans the space of transformed trigonometric polynomials
\begin{align}
	\label{def:trig_poly_trafo_mult}
	\Pi_{I,\psi} := \mathrm{span}\left\{ \sqrt{\frac{\varrho(\circ)}{\omega(\circ)}} \, \mathrm{e}^{2\pi\mathrm i \mathbf k\cdot\psi^{-1}(\circ)} : \mathbf k \in I \right\}.
\end{align}
As in \eqref{eq:exact_integration_formula}, for any transformed trigonometric polynomial $h \in \Pi_{I,\psi}$ the transformed lattice nodes 
${\mathbf y_j\in\Lambda_{\psi}(\mathbf z,M,I)}$ 
and all $\mathbf k\in I$ we have the exact integration property of the form 
\begin{align} 
	\label{eq:exact_integration_prop_2}
	\hat h_{\mathbf k}
	&= \int_{\left[-\frac{1}{2},\frac{1}{2}\right]^d} h(\mathbf y) \, \sqrt{\varrho(\mathbf y)\,\omega(\mathbf y)} \, \mathrm{e}^{-2\pi\mathrm i \mathbf k\cdot\psi^{-1}(\mathbf y)} \, \mathrm d\mathbf y 
	= \int_{\mathbb{T}^d} f(\mathbf x) \,\mathrm{e}^{-2\pi\mathrm{i}\mathbf k\cdot\mathbf x} \,\mathrm{d}\mathbf x \nonumber \\
	&= \frac{1}{M} \sum_{j =0}^{M-1} f(\mathbf x_j) \,\mathrm{e}^{-2\pi\mathrm{i}\mathbf k\cdot\mathbf x_j} 
	= \frac{1}{M} \sum_{j=0}^{M-1} h(\mathbf y_j)\, \sqrt{\frac{\omega(\mathbf y_j)}{\varrho(\mathbf y_j)}} \, \mathrm{e}^{-2\pi\mathrm i \mathbf k\cdot\psi^{-1}(\mathbf y_j)}
	=\hat h_{\mathbf k}^{\Lambda}.
\end{align}
Generally, the multivariate approximated Fourier coefficients of the form
\begin{align*}
	\hat h_{\mathbf k}^{\Lambda} 
	:= \frac{1}{M} \sum_{j=0}^{M-1} h(\mathbf y_j)\, \sqrt{\frac{\omega(\mathbf y_j)}{\varrho(\mathbf y_j)}} \, \mathrm{e}^{-2\pi\mathrm i \mathbf k\cdot\psi^{-1}(\mathbf y_j)} 
	= \frac{1}{M} \sum_{j=0}^{M-1} \frac{\omega(\mathbf y_j)}{\varrho(\mathbf y_j)} \, h(\mathbf y_j)\, \overline{\varphi_{\mathbf k}(\mathbf y_j)}
\end{align*}
only approximate the multivariate Fourier coefficients $\hat h_{\mathbf k}$.
Finally, the multivariate version of the approximated Fourier partial sum is given by
\begin{align}
	\label{def:mult_approximated_FPS_FC}
	S_{I}^{\Lambda}h(\mathbf y) := \sum_{\mathbf k\in I} \hat h_{\mathbf k}^{\Lambda} \, \varphi_{\mathbf k}(\mathbf y).
\end{align}
Similarly to the Hilbert space $\mathcal{H}^{\beta}(\mathbb T^d)$ given in \eqref{def:HbetaRaum}, we define such a space of $L_{2}$-functions on the cube $\left[-\frac{1}{2},\frac{1}{2}\right]^d$ with square summable Fourier coefficients $\hat h_{\mathbf k}$ as in \eqref{def:FC_trafo_multivar} by
\begin{align*}
	\textstyle
\mathcal{H}^{\beta}\left(\left[-\frac{1}{2},\frac{1}{2}\right]^d\right) := \left\{ h \in L_2\left(\left[-\frac{1}{2},\frac{1}{2}\right]^d, \omega\right) : \|h\|_{\mathcal{H}^{\beta}\left(\left[-\frac{1}{2},\frac{1}{2}\right]^d\right)} < \infty \right\}
\end{align*}
with the norm
\begin{align*}
	\|h\|_{\mathcal{H}^{\beta}\left(\left[-\frac{1}{2},\frac{1}{2}\right]^d\right)} 
	:= \left( \sum_{\mathbf k\in\mathbb{Z}^d} \omega_{\mathrm{hc}}(\mathbf k)^{2\beta} |\hat h_{\mathbf k}|^2 \right)^{\frac{1}{2}}.
\end{align*}
The existence of the Fourier coefficients $\hat h_{\mathbf k}$ becomes apparent after applying the well-known Cauchy-Schwarz-Inequality so that
\begin{align*}
	|\hat h_{\mathbf k}|
	&= \left|\int_{\left[-\frac{1}{2},\frac{1}{2}\right]^d} h(\mathbf y) \sqrt{\omega(\mathbf y) \varrho(\mathbf y)} \,\mathrm{e}^{-2\pi\mathrm i \mathbf k\cdot\psi^{-1}(\mathbf y)} \,\mathrm{d}\mathbf y \right| \\
	&\leq 
		\left(\int_{\left[-\frac{1}{2},\frac{1}{2}\right]^d} \left| h(\mathbf y) \right|^2 \omega(\mathbf y) \,\mathrm{d}\mathbf y \right)^{\frac{1}{2}}
		\left(\int_{\left[-\frac{1}{2},\frac{1}{2}\right]^d} \left| \varrho(\mathbf y) \right| \,\mathrm{d}\mathbf y \right)^{\frac{1}{2}}
	< \infty,
\end{align*}
which is due to the fact that $h \in L_2\left(\left[-\frac{1}{2},\frac{1}{2}\right]^d, \omega\right)$ and each univariate factor appearing in the multivariate density $\varrho(\mathbf y)$ as defined in \eqref{def:varrho_mult_cube} is an $L_{1}$-function.
Next, we adjust the approximation error bounds in Theorems~\ref{thm:L_infty_approx_error_torus} and \ref{eq:L_2_approximation_error_bound} for functions defined on the cube $\left[-\frac{1}{2},\frac{1}{2}\right]^d$.

\subsubsection{$L_{\infty}$-approximation error}
Based on the $L_{\infty}(\mathbb{T}^d)$-approximation error bound \eqref{eq:torus_infty_approx_bound} and the conditions proposed in Theorem~\ref{thm:Cm_composition_criteria_mult} we prove a similar upper bound for the approximation error $\left\| h - S_{I_N^d}^{\Lambda} h \right\|$ in terms of a weighted $L_{\infty}$-norm on $\left[-\frac{1}{2},\frac{1}{2}\right]^d$.
\begin{theorem} \label{thm:L_infty_approx_error_multivar}
	Let $d\in\mathbb{N}$, $m\in\mathbb{N}_{0}$, 
	a hyperbolic cross $I_{N}^{d}$ with $N\geq 2^{d+1}$ and a reconstructing {rank-$1$} lattice $\Lambda(\mathbf z, M, I_{N}^{d})$ be given.	
	Let $\omega$ be a weight function as in \eqref{eq:omega_weighted_mult},
	${h\in L_2\left(\left[-\frac{1}{2},\frac{1}{2}\right]^d,\omega\right)\cap \mathcal{C}_{\mathrm{mix}}^{m}\left(\left[-\frac{1}{2},\frac{1}{2}\right]^d\right)}$,
	let $\psi$ be a multivariate transformation as in \eqref{def:Trafo_cube_mult},  	
	and let $\lambda > \frac{1}{2}$.
	For all multi-indices $\mathbf m = (m_1,\ldots,m_d)^{\top} \in\mathbb{N}_{0}^{d}$ with $\|\mathbf m\|_{\ell_{\infty}} \leq m$
we assume that
	\begin{align*}
		{\psi\in\mathcal{C}_{\mathrm{mix}}^{m}\left(\left[-\frac{1}{2},\frac{1}{2}\right]^d\right)} 
		\quad \text{and} \quad 
		D^{\mathbf m}\left[\prod_{\ell=1}^{d}\sqrt{(\omega_{\ell}\circ\psi_{\ell})\psi'_{\ell}}\right]\in \mathcal{C}_{0}\left(\left[-\frac{1}{2},\frac{1}{2}\right]^d\right).
	\end{align*}
	Then there is an approximation error estimate of the form
	\begin{align*}
		\left\|h - S_{I_N^d}^{\Lambda} h \right\|_{L_{\infty}\left(\left[-\frac{1}{2},\frac{1}{2}\right]^d, \sqrt{\frac{\omega}{\varrho}}\right)} 
		\lesssim 2 N^{-m+\lambda} \|h\|_{\mathcal{H}^{m}\left(\left[-\frac{1}{2},\frac{1}{2}\right]^d\right)}.
	\end{align*}
\end{theorem}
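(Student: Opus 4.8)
The plan is to transfer the $L_{\infty}(\mathbb{T}^d)$-bound of Theorem~\ref{thm:L_infty_approx_error_torus} to the cube through the transformation $\psi$, exploiting that the weighted machinery on $\left[-\frac{1}{2},\frac{1}{2}\right]^d$ is by construction an isometric copy of the unweighted machinery on the torus. First I would invoke Theorem~\ref{thm:Cm_composition_criteria_mult}: the standing hypotheses on $\psi$ and on $\prod_{\ell=1}^{d}\sqrt{(\omega_{\ell}\circ\psi_{\ell})\psi_{\ell}'}$ are exactly condition~\eqref{eq:Cm_composition_criteria_mult}, so the transformed function $f$ of the form \eqref{eq:f_is_transformed_h_mult} lies in $\mathcal{H}^{m}(\mathbb{T}^d)$. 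Since $\lambda>\frac{1}{2}$, the embedding \eqref{eq:Wiener_algebra_inclusion} places $f$ in $\mathcal{A}^{m-\lambda}(\mathbb{T}^d)$ together with a continuous representative, so $f\in\mathcal{A}^{m-\lambda}(\mathbb{T}^d)\cap\mathcal{C}(\mathbb{T}^d)$ and Theorem~\ref{thm:L_infty_approx_error_torus} applies with $\beta=m-\lambda$.

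The key step is to rewrite the error $h-S_{I_N^d}^{\Lambda}h$ on the cube in terms of the error $f-S_{I_N^d}^{\Lambda}f$ on the torus. Inverting \eqref{eq:f_is_transformed_h_mult} and using $\psi_j'(\psi_j^{-1}(y_j))=1/\varrho_j(y_j)$ yields the pointwise identity $h(\mathbf y)=\sqrt{\varrho(\mathbf y)/\omega(\mathbf y)}\,f(\psi^{-1}(\mathbf y))$. The change of variables underlying \eqref{eq:exact_integration_prop_2} gives $\hat h_{\mathbf k}=\hat f_{\mathbf k}$, and inserting $h(\mathbf y_j)=\sqrt{\varrho(\mathbf y_j)/\omega(\mathbf y_j)}\,f(\mathbf x_j)$ into the sampled sum defining $\hat h_{\mathbf k}^{\Lambda}$ collapses the weight factors and leaves $\hat h_{\mathbf k}^{\Lambda}=\hat f_{\mathbf k}^{\Lambda}$. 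Feeding the latter into $S_{I_N^d}^{\Lambda}h=\sum_{\mathbf k}\hat h_{\mathbf k}^{\Lambda}\varphi_{\mathbf k}$ and recalling $\varphi_{\mathbf k}(\mathbf y)=\sqrt{\varrho(\mathbf y)/\omega(\mathbf y)}\,\mathrm e^{2\pi\mathrm i\mathbf k\cdot\psi^{-1}(\mathbf y)}$ produces $S_{I_N^d}^{\Lambda}h(\mathbf y)=\sqrt{\varrho(\mathbf y)/\omega(\mathbf y)}\,(S_{I_N^d}^{\Lambda}f)(\psi^{-1}(\mathbf y))$. Subtracting and multiplying by the weight $\sqrt{\omega/\varrho}$ then yields the clean relation
\[
\sqrt{\tfrac{\omega(\mathbf y)}{\varrho(\mathbf y)}}\,\bigl(h-S_{I_N^d}^{\Lambda}h\bigr)(\mathbf y)=\bigl(f-S_{I_N^d}^{\Lambda}f\bigr)\bigl(\psi^{-1}(\mathbf y)\bigr).
\]

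Since $\psi^{-1}$ is a bijection of $\left[-\frac{1}{2},\frac{1}{2}\right]^d$ onto $\mathbb{T}^d\simeq\left[-\frac{1}{2},\frac{1}{2}\right)^d$, taking the supremum over $\mathbf y$ on the left equals the supremum over $\mathbf x=\psi^{-1}(\mathbf y)$ on the right, so the weighted $L_{\infty}$-error on the cube equals $\|f-S_{I_N^d}^{\Lambda}f\|_{L_{\infty}(\mathbb{T}^d)}$. I would then apply Theorem~\ref{thm:L_infty_approx_error_torus} with $\beta=m-\lambda$ together with the norm estimate \eqref{eq:Wiener_algebra_inclusion2}, obtaining the bound $2\,C_{d,\lambda}\,N^{-m+\lambda}\|f\|_{\mathcal{H}^{m}(\mathbb{T}^d)}$; finally the coefficient identity $\hat h_{\mathbf k}=\hat f_{\mathbf k}$ gives $\|f\|_{\mathcal{H}^{m}(\mathbb{T}^d)}=\|h\|_{\mathcal{H}^{m}\left(\left[-\frac{1}{2},\frac{1}{2}\right]^d\right)}$, and absorbing $C_{d,\lambda}$ into $\lesssim$ yields the claim.

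The main obstacle is not a deep estimate but a careful bookkeeping of the weight factors: one must check that the factor $\sqrt{\varrho/\omega}$ carried by each $\varphi_{\mathbf k}$ cancels exactly against the sampling weight $\sqrt{\omega/\varrho}$ inside $\hat h_{\mathbf k}^{\Lambda}$ and against the weight defining the target $L_{\infty}$-norm, and that the density $\varrho$ — which may blow up at $\pm\frac{1}{2}$ — does not spoil the pointwise identity on the open cube, the only region where $\psi^{-1}$ and $\varrho$ are genuinely needed. A secondary point worth flagging is the admissible range of $m$: the embedding $\mathcal{H}^{m}\hookrightarrow\mathcal{A}^{m-\lambda}$ requires $m-\lambda\ge 0$, i.e.\ $m\ge\lambda>\frac{1}{2}$, so the estimate is substantive for $m\ge 1$.
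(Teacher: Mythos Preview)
Your proposal is correct and follows essentially the same route as the paper: invoke Theorem~\ref{thm:Cm_composition_criteria_mult} to place $f$ in $\mathcal{H}^{m}(\mathbb{T}^d)$, use the embedding \eqref{eq:Wiener_algebra_inclusion} and Theorem~\ref{thm:L_infty_approx_error_torus} with $\beta=m-\lambda$, and identify the weighted $L_{\infty}$-error on the cube with $\|f-S_{I_N^d}^{\Lambda}f\|_{L_{\infty}(\mathbb{T}^d)}$ via $\hat h_{\mathbf k}=\hat f_{\mathbf k}$ and the pointwise relation between $h$ and $f$. Your explicit verification that $\hat h_{\mathbf k}^{\Lambda}=\hat f_{\mathbf k}^{\Lambda}$ and your remark on the implicit constraint $m\ge 1$ are welcome clarifications that the paper leaves tacit.
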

\begin{proof}
	Let $d\in\mathbb{N}, m\in\mathbb{N}_{0}$ and $h\in{L_2\left(\left[-\frac{1}{2},\frac{1}{2}\right]^d,\omega\right)\cap \mathcal{C}_{\mathrm{mix}}^{m}\left(\left[-\frac{1}{2},\frac{1}{2}\right]^d\right)}$.
	By assumption the criteria of Theorem~\ref{thm:Cm_composition_criteria_mult} are fulfilled 
	and the transformed function $f$ of the form \eqref{eq:f_is_transformed_h_mult} is continuously extendable to the torus $\mathbb{T}^d$.
	Thus, we have $f \in \mathcal H^{m}(\mathbb{T}^d)$ and a continuous representative, because of the inclusion ${\mathcal{H}^{m}(\mathbb{T}^d) \hookrightarrow \mathcal{A}^{m-\lambda}(\mathbb T^d) \hookrightarrow \mathcal{C}(\mathbb{T}^d)}$ with $\lambda>\frac{1}{2}$ as in \eqref{eq:Wiener_algebra_inclusion}.
	Hence, for $f\in\mathcal{A}^{m-\lambda}(\mathbb{T}^d)\cap\mathcal{C}(\mathbb{T}^d)$ we have the approximation error bound 
	\begin{align}
		\label{eq:Linf_proof_1}
		\left\|f - S_{I_N^d}^{\Lambda} f \right\|_{L_{\infty}(\mathbb{T}^d)}
		\leq 2 N^{-m+\lambda} \|f\|_{\mathcal{A}^{m-\lambda}(\mathbb{T}^d)}
	\end{align}
	as stated in Theorem~\ref{thm:L_infty_approx_error_torus}.
	
	With the inverse transformation $\mathbf x=\psi^{-1}(\mathbf y)$ we have
	\begin{align*}
		\hat h_{\mathbf k}
		= \left(h, \varphi_{\mathbf k} \right)_{L_2\left(\left[-\frac{1}{2},\frac{1}{2}\right]^d, \omega \right)}
		= (f,\mathrm{e}^{2\pi\mathrm i \mathbf k\cdot\circ})_{L_2(\mathbb{T}^d)}
		= \hat f_{\mathbf k}
	\end{align*}
	and 
	\begin{align}
		\label{eq:Linf_proof_2}
		\|h\|_{\mathcal{H}^{m}\left(\left[-\frac{1}{2},\frac{1}{2}\right]^d\right)}^2
		= \sum_{\mathbf k\in\mathbb{Z}^d} \omega_{\mathrm{hc}}(\mathbf k)^{2m} |\hat h_{\mathbf k}|^2
		= \sum_{\mathbf k\in\mathbb{Z}^d} \omega_{\mathrm{hc}}(\mathbf k)^{2m} |\hat f_{\mathbf k}|^2		
		= \|f\|_{\mathcal{H}^{m}(\mathbb{T}^d)}^2,
	\end{align}
	as well as
	\begin{align*}
		&\left\| h - S_{I_{N}^{d}}h \right\|_{L_{\infty}\left(\left[-\frac{1}{2},\frac{1}{2}\right]^d, \sqrt{\frac{\omega}{\varrho}}\right)} \\
		&= \mathrm{esssup}_{\mathbf y\in\left[-\frac{1}{2},\frac{1}{2}\right]^d} \left| \sqrt{\frac{\omega(\mathbf y)}{\varrho(\mathbf y)}} \left( h(\mathbf y) - \sum_{\mathbf k\in I_{N}^{d}} \hat h_{\mathbf k} \, \varphi_{\mathbf k}(\mathbf y)\right) \right| \\
		&= \mathrm{esssup}_{\mathbf y\in\left[-\frac{1}{2},\frac{1}{2}\right]^d} \left| h(\mathbf y)\,\sqrt{\frac{\omega(\mathbf y)}{\varrho(\mathbf y)}} - \sum_{\mathbf k\in I_{N}^{d}} \hat h_{\mathbf k} \, \mathrm{e}^{2\pi\mathrm i \mathbf k\cdot\psi^{-1}(\mathbf{y})} \right| \\
		&= \mathrm{esssup}_{\mathbf x\in\mathbb{T}^{d}} \left| h(\psi(\mathbf x))\,\sqrt{\omega(\psi(\mathbf x))\prod_{j=1}^{d}\psi'_j(x_j)} - \sum_{\mathbf k\in I_{N}^{d}} \hat h_{\mathbf k} \, \mathrm{e}^{2\pi\mathrm i \mathbf k\cdot\mathbf{x}} \right| \\
		&= \left\|f - S_{I_{N}^{d}}f \right\|_{L_{\infty}(\mathbb{T}^d)}
	\end{align*}
	and 
	\begin{align}
		\label{eq:Linf_proof_3}
		\left\| h - S_{I_{N}^{d}}^{\Lambda}h \right\|_{L_{\infty}\left(\left[-\frac{1}{2},\frac{1}{2}\right]^d, \sqrt{\frac{\omega}{\varrho}}\right)} 
		= \left\|f - S_{I_{N}^{d}}^{\Lambda}f \right\|_{L_{\infty}(\mathbb{T}^d)}.
	\end{align} 
	In total, by combining \eqref{eq:Linf_proof_3}, \eqref{eq:Linf_proof_1}, \eqref{eq:Wiener_algebra_inclusion2} and \eqref{eq:Linf_proof_2} we estimate for the function $f\in\mathcal{H}^{m}(\mathbb{T}^d)\cap\mathcal{C}(\mathbb{T}^d)$ that
	\begin{align*}
		\left\| h - S_{I_N^d}^{\Lambda} h \right\|_{L_{\infty}\left(\left[-\frac{1}{2},\frac{1}{2}\right]^d, \sqrt{\frac{\omega}{\varrho}}\right)} 
		&= \left\| f - S_{I_N^d}^{\Lambda} f \right\|_{L_{\infty}(\mathbb{T}^d)}\\
		&\leq 2 N^{-m+\lambda} \|f\|_{\mathcal{A}^{m-\lambda}(\mathbb{T}^d)} \\
		&\leq 2\, C_{d,\lambda} N^{-m+\lambda} \|f\|_{\mathcal{H}^{m}(\mathbb{T}^d)}\\
		&= 2\,C_{d,\lambda} N^{-m+\lambda} \|h\|_{\mathcal{H}^{m}\left(\left[-\frac{1}{2},\frac{1}{2}\right]^d\right)}
		< \infty
	\end{align*}
	with $\lambda > \frac{1}{2}$ and some constant $C_{d,\lambda} > 1$.
\end{proof}

\subsubsection{$L_2$-approximation error}
Based on the $L_{2}(\mathbb{T}^d)$-approximation error bound \eqref{eq:H_beta_error_bound} and the conditions proposed in Theorem~\ref{thm:Cm_composition_criteria_mult} we prove an upper bound for the approximation error $\left\| h - S_{I_N^d}^{\Lambda} h \right\|$ in terms of a weighted $L_{2}$-norm on $\left[-\frac{1}{2},\frac{1}{2}\right]^d$.
\begin{theorem}\label{thm:Hm_approx_error_decay_multivar}
	Let $d\in\mathbb{N}$, $m\in\mathbb{N}_{0}$, a hyperbolic cross $I_{N}^{d}$ with $N\geq 2^{d+1}$ and a reconstructing {rank-$1$} lattice $\Lambda(\mathbf z, M, I_{N}^{d})$ be given.
	Let $\omega$ be a weight function as in \eqref{eq:omega_weighted_mult},
	${h\in L_2\left(\left[-\frac{1}{2},\frac{1}{2}\right]^d,\omega\right)\cap \mathcal{C}_{\mathrm{mix}}^{m}\left(\left[-\frac{1}{2},\frac{1}{2}\right]^d\right)}$,
	and let $\psi$ be a multivariate transformation as in \eqref{def:Trafo_cube_mult}.
	For all multi-indices $\mathbf m = (m_1,\ldots,m_d)^{\top} \in\mathbb{N}_{0}^{d}$ with $\|\mathbf m\|_{\ell_{\infty}} \leq m$
	we assume that
	\begin{align*}
		{\psi\in\mathcal{C}_{\mathrm{mix}}^{m}\left(\left[-\frac{1}{2},\frac{1}{2}\right]^d\right)} 
		\quad \text{and} \quad 
		D^{\mathbf m}\left[\prod_{\ell=1}^{d}\sqrt{(\omega_{\ell}\circ\psi_{\ell})\psi'_{\ell}}\right]\in \mathcal{C}_{0}\left(\left[-\frac{1}{2},\frac{1}{2}\right]^d\right).
	\end{align*}
	
	Then there is an approximation error estimate of the form
	\begin{align*}
		\left\| h - S_{I_{N}^{d}}^{\Lambda}h \right\|_{L_{2}\left(\left[-\frac{1}{2},\frac{1}{2}\right]^d, \omega \right)} 
		\lesssim N^{-m} (\log N)^{(d-1)/2} \|h\|_{\mathcal{H}^{m}\left(\left[-\frac{1}{2},\frac{1}{2}\right]^d\right)}.
	\end{align*}
\end{theorem}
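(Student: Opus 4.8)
The plan is to follow the same template as the proof of Theorem~\ref{thm:L_infty_approx_error_multivar}, merely replacing the $L_{\infty}$-ingredient by the $L_{2}$-bound \eqref{eq:H_beta_error_bound}. First I would invoke Theorem~\ref{thm:Cm_composition_criteria_mult}: since by hypothesis $\psi\in\mathcal{C}_{\mathrm{mix}}^{m}\left(\left[-\frac{1}{2},\frac{1}{2}\right]^d\right)$ and all derivatives $D^{\mathbf m}\left[\prod_{\ell=1}^{d}\sqrt{(\omega_{\ell}\circ\psi_{\ell})\psi'_{\ell}}\right]$ with $\|\mathbf m\|_{\ell_{\infty}}\le m$ vanish at the boundary, the transformed function $f$ of the form \eqref{eq:f_is_transformed_h_mult} belongs to $\mathcal H^{m}(\mathbb{T}^d)$ and extends continuously to $\mathbb{T}^d$. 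Since the torus estimate in Theorem~\ref{eq:L_2_approximation_error_bound} requires the smoothness index $\beta=m>\tfrac{1}{2}$, this step already forces the tacit restriction $m\ge 1$; the embedding $\mathcal H^{m}(\mathbb{T}^d)\hookrightarrow\mathcal A(\mathbb T^d)$ of \eqref{eq:Wiener_algebra_inclusion} then guarantees a continuous representative, so that $f\in\mathcal H^{m}(\mathbb{T}^d)\cap\mathcal C(\mathbb{T}^d)$ and Theorem~\ref{eq:L_2_approximation_error_bound} is applicable.

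The heart of the argument is an exact isometry: the substitution $\mathbf y=\psi(\mathbf x)$ identifies $L_2\left(\left[-\frac{1}{2},\frac{1}{2}\right]^d,\omega\right)$ with $L_2(\mathbb{T}^d)$. Concretely, using $\varrho(\psi(\mathbf x))=1/\prod_{j=1}^{d}\psi'_j(x_j)$ one checks that $\varphi_{\mathbf k}(\psi(\mathbf x))\sqrt{\omega(\psi(\mathbf x))\prod_{j=1}^{d}\psi'_j(x_j)}=\mathrm e^{2\pi\mathrm i\mathbf k\cdot\mathbf x}$, so that multiplying the integrand of the weighted cube-norm by the Jacobian factor turns $h$ into $f$ and each $\varphi_{\mathbf k}$ into a plain exponential. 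Together with the coefficient identities $\hat h_{\mathbf k}=\hat f_{\mathbf k}$ and $\hat h_{\mathbf k}^{\Lambda}=\hat f_{\mathbf k}^{\Lambda}$ already recorded in \eqref{eq:exact_integration_prop_2}, this shows that $S_{I_N^d}^{\Lambda}h$ transforms precisely into $S_{I_N^d}^{\Lambda}f$, whence
\begin{align*}
	\left\| h - S_{I_{N}^{d}}^{\Lambda}h \right\|_{L_{2}\left(\left[-\frac{1}{2},\frac{1}{2}\right]^d,\omega\right)}
	= \left\| f - S_{I_{N}^{d}}^{\Lambda}f \right\|_{L_{2}(\mathbb{T}^d)}.
\end{align*}

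It then remains to apply the torus bound \eqref{eq:H_beta_error_bound} with $\beta=m$ to the right-hand side, and to rewrite the resulting $\mathcal H^{m}(\mathbb{T}^d)$-norm of $f$ via the identity $\|f\|_{\mathcal H^{m}(\mathbb{T}^d)}=\|h\|_{\mathcal H^{m}\left(\left[-\frac{1}{2},\frac{1}{2}\right]^d\right)}$, which follows from $\hat h_{\mathbf k}=\hat f_{\mathbf k}$ exactly as in \eqref{eq:Linf_proof_2}. Chaining these three facts yields the claimed $N^{-m}(\log N)^{(d-1)/2}$ decay with constant $C_{d,m}$.

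I expect the argument to be essentially routine, since all of the hard analytic work is outsourced to Theorem~\ref{thm:Cm_composition_criteria_mult} and to the torus estimate. The one point that deserves care, and the only genuine difference from the $L_{\infty}$ case, is that here the relevant norm is the natural weighted norm with weight $\omega$ rather than $\sqrt{\omega/\varrho}$: the $L_2$-change of variables absorbs the Jacobian \emph{exactly} and produces a true isometry, so no auxiliary weight and no Wiener-algebra constant $C_{d,\lambda}$ enters. The main thing to verify carefully is therefore the isometry identity above and the implicit hypothesis $m\ge 1$ needed to place $\beta=m$ in the admissible range $\beta>\tfrac{1}{2}$ of Theorem~\ref{eq:L_2_approximation_error_bound}.
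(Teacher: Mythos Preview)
Your proposal is correct and follows essentially the same approach as the paper: invoke Theorem~\ref{thm:Cm_composition_criteria_mult} to place $f\in\mathcal H^{m}(\mathbb T^d)\cap\mathcal C(\mathbb T^d)$, use the change of variables to identify $\hat h_{\mathbf k}=\hat f_{\mathbf k}$, $\|h\|_{\mathcal H^{m}}=\|f\|_{\mathcal H^{m}}$, and $\|h-S_{I_N^d}^{\Lambda}h\|_{L_2(\omega)}=\|f-S_{I_N^d}^{\Lambda}f\|_{L_2(\mathbb T^d)}$, and then apply Theorem~\ref{eq:L_2_approximation_error_bound}. Your explicit remark that the torus bound forces the tacit restriction $m\ge 1$ (since $\beta=m>\tfrac12$ is needed) is a point the paper's proof does not flag, even though the theorem is stated for $m\in\mathbb N_0$.
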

\begin{proof}
	Let $m\in\mathbb{N}_{0}, d\in\mathbb{N}$ and $h\in{L_2\left(\left[-\frac{1}{2},\frac{1}{2}\right]^d,\omega\right)\cap \mathcal{C}_{\mathrm{mix}}^{m}\left(\left[-\frac{1}{2},\frac{1}{2}\right]^d\right)}$.
By assumption the criteria of Theorem~\ref{thm:Cm_composition_criteria_mult} are fulfilled 
	and the transformed function $f$ of the form \eqref{eq:f_is_transformed_h_mult} is continuously extendable to the torus $\mathbb{T}^d$.
	Thus, we have $f \in \mathcal{H}^{m}(\mathbb{T}^d)$ and a continuous representative, because of the inclusion $\mathcal{H}^{m}(\mathbb{T}^d) \hookrightarrow \mathcal{C}(\mathbb{T}^d)$ as in \eqref{eq:Wiener_algebra_inclusion}.
	For $f\in\mathcal{H}^{m}(\mathbb{T}^d) \cap \mathcal{C}(\mathbb{T}^d)$ Theorem~\ref{eq:L_2_approximation_error_bound} yields the approximation error bound of the form
	\begin{align}\label{eq:L2_proof_1}
		\left\| f - S_{I_{N}^{d}}^{\Lambda}f \right\|_{L_{2}(\mathbb{T}^d)} 
		\leq C_{d,\beta} N^{-\beta} (\log N)^{(d-1)/2} \|f\|_{\mathcal{H}^{\beta}(\mathbb{T}^d)}
	\end{align}
	with some constant $C_{d,\beta} := C(d,\beta)>0$.
	With the inverse transformation $\bm x=\psi^{-1}(\bm y)$ we have
	\begin{align*} 
		\hat h_{\mathbf k}
		= \left(h, \varphi_{\mathbf k} \right)_{L_2\left(\left[-\frac{1}{2},\frac{1}{2}\right]^d, \omega\right)}
		= (f,\mathrm{e}^{2\pi\mathrm i \mathbf k\cdot\circ})_{L_2(\mathbb{T}^d)}
		= \hat f_{\mathbf k},
	\end{align*}
	and
	\begin{align*}
		\|h\|_{\mathcal{H}^{m}\left(\left[-\frac{1}{2},\frac{1}{2}\right]^d\right)}^2
		= \sum_{\mathbf k\in\mathbb{Z}^d} \omega_{\mathrm{hc}}(\mathbf k)^{2m} |\hat h_{\mathbf k}|^2
		= \sum_{\mathbf k\in\mathbb{Z}^d} \omega_{\mathrm{hc}}(\mathbf k)^{2m} |\hat f_{\mathbf k}|^2		
		= \|f\|_{\mathcal{H}^{m}(\mathbb{T}^d)}^2
	\end{align*}
	as in \eqref{eq:Linf_proof_2}, as well as 
	\begin{align} \label{eq:L2_proof_3}
		\left\| h - S_{I_{N}^{d}}h \right\|_{L_{2}\left(\left[-\frac{1}{2},\frac{1}{2}\right]^d, \omega\right)}^2
		&= \int_{\left[-\frac{1}{2},\frac{1}{2}\right]^d} \left| h(\mathbf y) - \sum_{\mathbf k\in I_{N}^{d}} \hat h_{\mathbf k} \, \varphi_{\mathbf k}(\mathbf y) \right|^2 \omega(\mathbf y) \,\mathrm{d}\mathbf y \nonumber\\
		&= \left\| f - S_{I_{N}^{d}}f \right\|_{L_{2}(\mathbb{T}^d)}^2
	\end{align}	
	and	$\left\| h - S_{I_{N}^{d}}^{\Lambda}h \right\|_{L_{2}\left(\left[-\frac{1}{2},\frac{1}{2}\right]^d, \omega \right)}
		= \left\|f - S_{I_{N}^{d}}^{\Lambda}f \right\|_{L_{2}(\mathbb{T}^d)}.$
In total, by combining \eqref{eq:L2_proof_3}, \eqref{eq:L2_proof_1}, and \eqref{eq:Linf_proof_2} we estimate for $f\in\mathcal{H}^{m}(\mathbb{T}^d)\cap\mathcal{C}(\mathbb{T}^d)$ that
	\begin{align*}
		\left\| h - S_{I_N^d}^{\Lambda} h \right\|_{L_{2}\left(\left[-\frac{1}{2},\frac{1}{2}\right]^d, \omega\right)}
		&= \left\| f - S_{I_N^d}^{\Lambda} f \right\|_{L_{2}(\mathbb{T}^d)} \\
		&\lesssim C_{d,\beta} N^{-\beta} (\log N)^{(d-1)/2} \|f\|_{\mathcal{H}^{\beta}(\mathbb{T}^d)} \\
		&= C_{d,\beta} N^{-\beta} (\log N)^{(d-1)/2} \|h\|_{\mathcal{H}^{m}\left(\left[-\frac{1}{2},\frac{1}{2}\right]^d\right)}
		< \infty
	\end{align*}
	with some constant $C_{d,\beta} > 0$.
\end{proof}

\section{Algorithms}
In this chapter we start denoting the parameters $\bm\eta,\bm\mu\in\mathbb{R}_{+}^d$.
Families of multivariate parameterized weight functions are denoted by $\omega(\circ, \bm \mu)$ as in \eqref{eq:omega_weighted_mult} 
and for families of multivariate torus-to-cube transformations we use the notation $\psi(\circ,\bm\eta)$ to represent all possible torus-to-cube transformations in the sense of definition~\eqref{def:Trafo_cube_mult}. 
Furthermore, all related functions and objects are now written with a parameter argument, too. 

We adjust the algorithms described in \cite[Algorithm~3.1 and 3.2]{kaemmererdiss} that are based on one-dimensional fast Fourier transforms (FFTs).
They are used for the fast reconstruction of the approximated Fourier coefficients $\hat h_{\mathbf{k}}^{\Lambda}$ and the evaluation of the transformed multivariate trigonometric polynomials, in particular the approximated Fourier series $S_{I}^{\Lambda} h$, both given in \eqref{def:mult_approximated_FPS_FC}.
Both procedures are denoted as matrix-vector-products of the form
\begin{align*}\mathbf h = \mathbf A\mathbf{\hat h}
	\quad \text{and} \quad 
	\mathbf{\hat{h}} = M^{-1}\mathbf A^*\mathbf{h}
\end{align*}
with $\bm\eta,\bm\mu\in\mathbb{R}_{+}^d, \mathbf{h} := \left(h(\mathbf y_j)\, \sqrt{\frac{\omega(\mathbf y_j, \bm\mu)}{\varrho(\mathbf y_j, \bm\eta)}}\right)_{j=0,\ldots,M-1}$ for $\mathbf y_j \in \Lambda_{\psi(\circ,\bm\eta)}(\mathbf z,M)$, $\hat{\mathbf{h}} := (\hat h_{\mathbf k})_{\mathbf k \in I}$, the transformed Fourier matrices $\mathbf A\in \mathbb{C}^{M \times |I|}$ and its adjoint $\mathbf A^{*}\in \mathbb{C}^{|I| \times M}$ given by
\begin{align*}
	\mathbf A 
	&
	:= \left( \sqrt{\frac{\omega(\mathbf y_j,\bm\mu)}{\varrho(\mathbf y_j,\bm\eta)}} \, \varphi_{\mathbf k}(\mathbf y_j) \right)_{\mathbf y_j \in \Lambda_{\psi(\circ,\bm\eta)}(\mathbf z,M),\, \mathbf k\in I}, \nonumber\\
	\mathbf A^{*} 
	&
	:= \left( \sqrt{\frac{\omega(\mathbf y_j,\bm\mu)}{\varrho(\mathbf y_j,\bm\eta)}} \, \overline{\varphi_{\mathbf k}(\mathbf y_j)} \right)_{\mathbf k\in I, \mathbf y_j \in \Lambda_{\psi(\circ,\bm\eta)}(\mathbf z,M)}.
\end{align*}
We incorporate the previously described idea of transforming a function 
$h\in L_2\left(\left[-\frac{1}{2},\frac{1}{2}\right]^d,\omega\right)\cap \mathcal{C}_{\mathrm{mix}}^{m}\left(\left[-\frac{1}{2},\frac{1}{2}\right]^d\right)$ 
by a transformation $\mathbf y_j = \psi(\mathbf x_j,\bm\eta)$ with $\mathbf x_j = (x_1^{j},\ldots, x_d^{j})$
into a periodic function $f$ on the torus $\mathbb T^d$ that is of the form \eqref{eq:f_is_transformed_h_mult}.
The resulting samples are of the form
\begin{align} \label{eq:Input_sample_points}
	h(\mathbf y_j)\, \sqrt{\frac{\omega(\mathbf y_j, \bm\mu)}{\varrho(\mathbf y_j, \bm\eta)}} 
	= h(\psi(\mathbf x_j,\bm\eta))  \sqrt{ \omega(\psi(\mathbf x_j,\bm\eta), \bm\mu)  \prod_{k=1}^{d}\psi_{k}'(x_k^{j}, \eta_k) }
	= f(\mathbf x_j,\bm\eta,\bm\mu)
\end{align}
with the parameters $\bm\eta,\bm\mu\in\mathbb{R}_{+}^d$.
\begin{remark}
	We identify $\mathbb T^d$ with different cubes.
	We consider $\mathbb T^d \simeq[0,1)^d$ when defining {rank-$1$} lattices $\Lambda(\mathbf z, M)$ in \eqref{def:rank_one_lattice}.
	However, we consider $\mathbb T^d \simeq [-\frac{1}{2},\frac{1}{2})^d$ when applying a transformation $\psi$ to a rank-$1$ lattice. 
	In this process, we reassign all lattice points $\mathbf x_j\in\Lambda(\mathbf z, M)$ via
	\begin{align*}
		{\mathbf{x}}_j \mapsto \left( \left( \mathbf{x}_j + \frac{\mathbf 1}{2} \right) \bmod \mathbf 1 \right) - \frac{\mathbf 1}{2}
	\end{align*}
	for all $j=0,\ldots,M-1$.
\end{remark}	
In Figure~\ref{fig:Lattice_and_transformed_lattice} we showcase different two-dimensional transformed {rank-$1$} lattices $\Lambda_{\psi(\circ,\bm\eta)}(\mathbf z, M)$ as defined in \eqref{eq:Def_trafo_lattice}, generated by $\mathbf z = (1,7)^{\top}$ and $M = 150$.
We compare the lattices transformed by the sine transformation~\eqref{eq:sine_trafo} with our previously introduced torus-to-cube transformations \eqref{eq:logarithmic_trafo_comb} with the parameter vector $\bm\eta = (\eta_1, \eta_2)^{\top} = (3, 3)^{\top}$.
\begin{figure}[tb]
	\centering
	\begin{tikzpicture}[scale=0.55]
	\begin{axis}[scatter/classes = { a = {mark=o, draw=black} },
	font=\footnotesize,	grid = both,
	xmax = 0.75,	xmin = -0.75,	ymax = 0.75,	ymin = -0.75,
	xtick={-0.5,-0.25,0,0.25,0.5}, ytick={-0.5,-0.25,0,0.25,0.5},
	title = {$\Lambda(\mathbf z, M)$},
	unit vector ratio*=1 1 1
	]
	\addplot[scatter ,only marks, mark size=1.0, scatter src = explicit symbolic]coordinates{
		(0, 0)  (0.006667, 0.04667)  (0.01333, 0.09333)  (0.02, 0.14)  (0.02667, 0.1867)  (0.03333, 0.2333)  (0.04, 0.28)  (0.04667, 0.3267)  (0.05333, 0.3733)  (0.06, 0.42)  (0.06667, 0.4667)  (0.07333, -0.4867)  (0.08, -0.44)  (0.08667, -0.3933)  (0.09333, -0.3467)  (0.1, -0.3)  (0.1067, -0.2533)  (0.1133, -0.2067)  (0.12, -0.16)  (0.1267, -0.1133)  (0.1333, -0.06667)  (0.14, -0.02)  (0.1467, 0.02667)  (0.1533, 0.07333)  (0.16, 0.12)  (0.1667, 0.1667)  (0.1733, 0.2133)  (0.18, 0.26)  (0.1867, 0.3067)  (0.1933, 0.3533)  (0.2, 0.4)  (0.2067, 0.4467)  (0.2133, 0.4933)  (0.22, -0.46)  (0.2267, -0.4133)  (0.2333, -0.3667)  (0.24, -0.32)  (0.2467, -0.2733)  (0.2533, -0.2267)  (0.26, -0.18)  (0.2667, -0.1333)  (0.2733, -0.08667)  (0.28, -0.04)  (0.2867, 0.006667)  (0.2933, 0.05333)  (0.3, 0.1)  (0.3067, 0.1467)  (0.3133, 0.1933)  (0.32, 0.24)  (0.3267, 0.2867)  (0.3333, 0.3333)  (0.34, 0.38)  (0.3467, 0.4267)  (0.3533, 0.4733)  (0.36, -0.48)  (0.3667, -0.4333)  (0.3733, -0.3867)  (0.38, -0.34)  (0.3867, -0.2933)  (0.3933, -0.2467)  (0.4, -0.2)  (0.4067, -0.1533)  (0.4133, -0.1067)  (0.42, -0.06)  (0.4267, -0.01333)  (0.4333, 0.03333)  (0.44, 0.08)  (0.4467, 0.1267)  (0.4533, 0.1733)  (0.46, 0.22)  (0.4667, 0.2667)  (0.4733, 0.3133)  (0.48, 0.36)  (0.4867, 0.4067)  (0.4933, 0.4533)  (-0.5, -0.5)  (-0.4933, -0.4533)  (-0.4867, -0.4067)  (-0.48, -0.36)  (-0.4733, -0.3133)  (-0.4667, -0.2667)  (-0.46, -0.22)  (-0.4533, -0.1733)  (-0.4467, -0.1267)  (-0.44, -0.08)  (-0.4333, -0.03333)  (-0.4267, 0.01333)  (-0.42, 0.06)  (-0.4133, 0.1067)  (-0.4067, 0.1533)  (-0.4, 0.2)  (-0.3933, 0.2467)  (-0.3867, 0.2933)  (-0.38, 0.34)  (-0.3733, 0.3867)  (-0.3667, 0.4333)  (-0.36, 0.48)  (-0.3533, -0.4733)  (-0.3467, -0.4267)  (-0.34, -0.38)  (-0.3333, -0.3333)  (-0.3267, -0.2867)  (-0.32, -0.24)  (-0.3133, -0.1933)  (-0.3067, -0.1467)  (-0.3, -0.1)  (-0.2933, -0.05333)  (-0.2867, -0.006667)  (-0.28, 0.04)  (-0.2733, 0.08667)  (-0.2667, 0.1333)  (-0.26, 0.18)  (-0.2533, 0.2267)  (-0.2467, 0.2733)  (-0.24, 0.32)  (-0.2333, 0.3667)  (-0.2267, 0.4133)  (-0.22, 0.46)  (-0.2133, -0.4933)  (-0.2067, -0.4467)  (-0.2, -0.4)  (-0.1933, -0.3533)  (-0.1867, -0.3067)  (-0.18, -0.26)  (-0.1733, -0.2133)  (-0.1667, -0.1667)  (-0.16, -0.12)  (-0.1533, -0.07333)  (-0.1467, -0.02667)  (-0.14, 0.02)  (-0.1333, 0.06667)  (-0.1267, 0.1133)  (-0.12, 0.16)  (-0.1133, 0.2067)  (-0.1067, 0.2533)  (-0.1, 0.3)  (-0.09333, 0.3467)  (-0.08667, 0.3933)  (-0.08, 0.44)  (-0.07333, 0.4867)  (-0.06667, -0.4667)  (-0.06, -0.42)  (-0.05333, -0.3733)  (-0.04667, -0.3267)  (-0.04, -0.28)  (-0.03333, -0.2333)  (-0.02667, -0.1867)  (-0.02, -0.14)  (-0.01333, -0.09333)  (-0.006667, -0.04667)
	};
	\end{axis}
	\end{tikzpicture}
	\begin{tikzpicture}[scale=0.55]
	\begin{axis}[scatter/classes = { a = {mark=o, draw=black} },
	font=\footnotesize,	grid = both,
	xmax = 0.75,	xmin = -0.75,	ymax = 0.75,	ymin = -0.75,
	title style={align=left},
	title = {$\Lambda_{\psi(\circ)}(\mathbf z, M)$ with\\ $\psi(x_j) = \frac{1}{2}\,\sin(\pi x_j)$ },
	unit vector ratio*=1 1 1
	]
	\addplot[scatter ,only marks, mark size=1.0, scatter src = explicit symbolic]coordinates{
		(0, 0)  (0.01047, 0.07304)  (0.02094, 0.1445)  (0.0314, 0.2129)  (0.04184, 0.2767)  (0.05226, 0.3346)  (0.06267, 0.3853)  (0.07304, 0.4277)  (0.08338, 0.4609)  (0.09369, 0.4843)  (0.104, 0.4973)  (0.1142, -0.4996)  (0.1243, -0.4911)  (0.1345, -0.4722)  (0.1445, -0.4431)  (0.1545, -0.4045)  (0.1644, -0.3572)  (0.1743, -0.3023)  (0.1841, -0.2409)  (0.1938, -0.1743)  (0.2034, -0.104)  (0.2129, -0.0314)  (0.2223, 0.04184)  (0.2316, 0.1142)  (0.2409, 0.1841)  (0.25, 0.25)  (0.259, 0.3106)  (0.2679, 0.3645)  (0.2767, 0.4106)  (0.2854, 0.4479)  (0.2939, 0.4755)  (0.3023, 0.493)  (0.3106, 0.4999)  (0.3187, -0.4961)  (0.3267, -0.4816)  (0.3346, -0.4568)  (0.3423, -0.4222)  (0.3498, -0.3785)  (0.3572, -0.3267)  (0.3645, -0.2679)  (0.3716, -0.2034)  (0.3785, -0.1345)  (0.3853, -0.06267)  (0.3918, 0.01047)  (0.3983, 0.08338)  (0.4045, 0.1545)  (0.4106, 0.2223)  (0.4165, 0.2854)  (0.4222, 0.3423)  (0.4277, 0.3918)  (0.433, 0.433)  (0.4382, 0.4649)  (0.4431, 0.4868)  (0.4479, 0.4982)  (0.4524, -0.499)  (0.4568, -0.4891)  (0.4609, -0.4686)  (0.4649, -0.4382)  (0.4686, -0.3983)  (0.4722, -0.3498)  (0.4755, -0.2939)  (0.4787, -0.2316)  (0.4816, -0.1644)  (0.4843, -0.09369)  (0.4868, -0.02094)  (0.4891, 0.05226)  (0.4911, 0.1243)  (0.493, 0.1938)  (0.4946, 0.259)  (0.4961, 0.3187)  (0.4973, 0.3716)  (0.4982, 0.4165)  (0.499, 0.4524)  (0.4996, 0.4787)  (0.4999, 0.4946)  (-0.5, -0.5)  (-0.4999, -0.4946)  (-0.4996, -0.4787)  (-0.499, -0.4524)  (-0.4982, -0.4165)  (-0.4973, -0.3716)  (-0.4961, -0.3187)  (-0.4946, -0.259)  (-0.493, -0.1938)  (-0.4911, -0.1243)  (-0.4891, -0.05226)  (-0.4868, 0.02094)  (-0.4843, 0.09369)  (-0.4816, 0.1644)  (-0.4787, 0.2316)  (-0.4755, 0.2939)  (-0.4722, 0.3498)  (-0.4686, 0.3983)  (-0.4649, 0.4382)  (-0.4609, 0.4686)  (-0.4568, 0.4891)  (-0.4524, 0.499)  (-0.4479, -0.4982)  (-0.4431, -0.4868)  (-0.4382, -0.4649)  (-0.433, -0.433)  (-0.4277, -0.3918)  (-0.4222, -0.3423)  (-0.4165, -0.2854)  (-0.4106, -0.2223)  (-0.4045, -0.1545)  (-0.3983, -0.08338)  (-0.3918, -0.01047)  (-0.3853, 0.06267)  (-0.3785, 0.1345)  (-0.3716, 0.2034)  (-0.3645, 0.2679)  (-0.3572, 0.3267)  (-0.3498, 0.3785)  (-0.3423, 0.4222)  (-0.3346, 0.4568)  (-0.3267, 0.4816)  (-0.3187, 0.4961)  (-0.3106, -0.4999)  (-0.3023, -0.493)  (-0.2939, -0.4755)  (-0.2854, -0.4479)  (-0.2767, -0.4106)  (-0.2679, -0.3645)  (-0.259, -0.3106)  (-0.25, -0.25)  (-0.2409, -0.1841)  (-0.2316, -0.1142)  (-0.2223, -0.04184)  (-0.2129, 0.0314)  (-0.2034, 0.104)  (-0.1938, 0.1743)  (-0.1841, 0.2409)  (-0.1743, 0.3023)  (-0.1644, 0.3572)  (-0.1545, 0.4045)  (-0.1445, 0.4431)  (-0.1345, 0.4722)  (-0.1243, 0.4911)  (-0.1142, 0.4996)  (-0.104, -0.4973)  (-0.09369, -0.4843)  (-0.08338, -0.4609)  (-0.07304, -0.4277)  (-0.06267, -0.3853)  (-0.05226, -0.3346)  (-0.04184, -0.2767)  (-0.0314, -0.2129)  (-0.02094, -0.1445)  (-0.01047, -0.07304)      
	};
	\end{axis}
	\end{tikzpicture}
	\begin{tikzpicture}[scale=0.55]
	\begin{axis}[scatter/classes = { a = {mark=o, draw=black} },
	font=\footnotesize,	grid = both,
	xmax = 0.75,	xmin = -0.75,	ymax = 0.75,	ymin = -0.75,
	title style={align=left},
	title = {$\Lambda_{\psi(\circ,\bm 3)}(\mathbf z, M)$ with\\ $\psi_j(x_j,3) = \frac{1}{2}\,\frac{(1+2x_j)^3 - (1-2x_j)^3}{(1+2x_j)^3 + (1-2x_j)^3}$ },
	unit vector ratio*=1 1 1
	]
	\addplot[scatter ,only marks, mark size=1.0, scatter src = explicit symbolic]coordinates{
		(0, 0)  (0.01999, 0.1368)  (0.03992, 0.2564)  (0.05975, 0.3489)  (0.0794, 0.4132)  (0.09883, 0.4541)  (0.118, 0.4781)  (0.1368, 0.4909)  (0.1553, 0.497)  (0.1734, 0.4993)  (0.191, 0.5)  (0.2081, -0.5)  (0.2248, -0.4997)  (0.2409, -0.4983)  (0.2564, -0.4941)  (0.2714, -0.4846)  (0.2858, -0.4661)  (0.2996, -0.4333)  (0.3129, -0.3797)  (0.3255, -0.2996)  (0.3375, -0.191)  (0.3489, -0.05975)  (0.3598, 0.0794)  (0.37, 0.2081)  (0.3797, 0.3129)  (0.3889, 0.3889)  (0.3975, 0.4391)  (0.4056, 0.4695)  (0.4132, 0.4864)  (0.4204, 0.4949)  (0.427, 0.4986)  (0.4333, 0.4998)  (0.4391, 0.5)  (0.4445, -0.4999)  (0.4495, -0.4991)  (0.4541, -0.4964)  (0.4584, -0.4895)  (0.4624, -0.4754)  (0.4661, -0.4495)  (0.4695, -0.4056)  (0.4726, -0.3375)  (0.4754, -0.2409)  (0.4781, -0.118)  (0.4804, 0.01999)  (0.4826, 0.1553)  (0.4846, 0.2714)  (0.4864, 0.3598)  (0.4881, 0.4204)  (0.4895, 0.4584)  (0.4909, 0.4804)  (0.4921, 0.4921)  (0.4931, 0.4975)  (0.4941, 0.4995)  (0.4949, 0.5)  (0.4957, -0.5)  (0.4964, -0.4996)  (0.497, -0.4979)  (0.4975, -0.4931)  (0.4979, -0.4826)  (0.4983, -0.4624)  (0.4986, -0.427)  (0.4989, -0.37)  (0.4991, -0.2858)  (0.4993, -0.1734)  (0.4995, -0.03992)  (0.4996, 0.09883)  (0.4997, 0.2248)  (0.4998, 0.3255)  (0.4999, 0.3975)  (0.4999, 0.4445)  (0.5, 0.4726)  (0.5, 0.4881)  (0.5, 0.4957)  (0.5, 0.4989)  (0.5, 0.4999)  (-0.5, -0.5)  (-0.5, -0.4999)  (-0.5, -0.4989)  (-0.5, -0.4957)  (-0.5, -0.4881)  (-0.5, -0.4726)  (-0.4999, -0.4445)  (-0.4999, -0.3975)  (-0.4998, -0.3255)  (-0.4997, -0.2248)  (-0.4996, -0.09883)  (-0.4995, 0.03992)  (-0.4993, 0.1734)  (-0.4991, 0.2858)  (-0.4989, 0.37)  (-0.4986, 0.427)  (-0.4983, 0.4624)  (-0.4979, 0.4826)  (-0.4975, 0.4931)  (-0.497, 0.4979)  (-0.4964, 0.4996)  (-0.4957, 0.5)  (-0.4949, -0.5)  (-0.4941, -0.4995)  (-0.4931, -0.4975)  (-0.4921, -0.4921)  (-0.4909, -0.4804)  (-0.4895, -0.4584)  (-0.4881, -0.4204)  (-0.4864, -0.3598)  (-0.4846, -0.2714)  (-0.4826, -0.1553)  (-0.4804, -0.01999)  (-0.4781, 0.118)  (-0.4754, 0.2409)  (-0.4726, 0.3375)  (-0.4695, 0.4056)  (-0.4661, 0.4495)  (-0.4624, 0.4754)  (-0.4584, 0.4895)  (-0.4541, 0.4964)  (-0.4495, 0.4991)  (-0.4445, 0.4999)  (-0.4391, -0.5)  (-0.4333, -0.4998)  (-0.427, -0.4986)  (-0.4204, -0.4949)  (-0.4132, -0.4864)  (-0.4056, -0.4695)  (-0.3975, -0.4391)  (-0.3889, -0.3889)  (-0.3797, -0.3129)  (-0.37, -0.2081)  (-0.3598, -0.0794)  (-0.3489, 0.05975)  (-0.3375, 0.191)  (-0.3255, 0.2996)  (-0.3129, 0.3797)  (-0.2996, 0.4333)  (-0.2858, 0.4661)  (-0.2714, 0.4846)  (-0.2564, 0.4941)  (-0.2409, 0.4983)  (-0.2248, 0.4997)  (-0.2081, 0.5)  (-0.191, -0.5)  (-0.1734, -0.4993)  (-0.1553, -0.497)  (-0.1368, -0.4909)  (-0.118, -0.4781)  (-0.09883, -0.4541)  (-0.0794, -0.4132)  (-0.05975, -0.3489)  (-0.03992, -0.2564)  (-0.01999, -0.1368)   
	};
	\end{axis}
	\end{tikzpicture}
\caption{A two-dimensional lattice $\Lambda(\mathbf z,M)$ with $\mathbf z = (1,7)^{\top}, M = 150$ in the top left, the transformed lattice $\Lambda_{\psi(\circ,\bm\eta)}(\mathbf z, M)$ for the sine transformation \eqref{eq:sine_trafo} in the bottom left,
	and resulting lattices from applying the logarithmic transformation \eqref{eq:logarithmic_trafo_comb} with $\bm\eta=\mathbf 3$.
	}
	\label{fig:Lattice_and_transformed_lattice}
\end{figure}
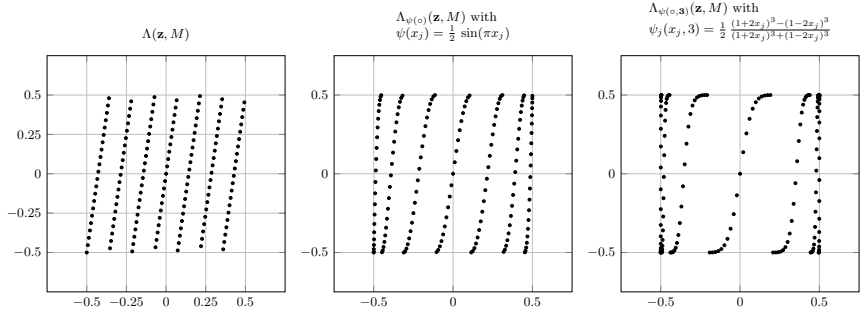

\subsection{Evaluation of transformed multivariate trigonometric polynomials}
\begin{algorithm}[tb]
\caption{Evaluation at {rank-$1$} lattice}
\label{alg:LFFT_eval}
\begin{tabular}{p{1.25cm}p{4.75cm}p{4.60cm}}
	Input: 	
	& $M\in\mathbb{N}$ & lattice size of $\Lambda_{\psi(\circ,\bm\eta)}(\mathbf z,M)$\\
	& $\mathbf z\in\mathbb{Z}^d$ & generating vector of $\Lambda_{\psi(\circ,\bm\eta)}(\mathbf z,M)$\\
	& $I\subset\mathbb{Z}^d$ & frequency set of finite cardinality\\
	& $\mathbf{\hat h} = \left(\hat{h}_{\mathbf k}\right)_{\mathbf k\in I}$ & Fourier coefficients of $h\in\Pi_{I,\psi(\circ,\bm\eta)}$
\end{tabular}
\begin{algorithmic}
	\STATE $\mathbf {\hat{g}}=\left(0\right)_{l=0}^{M-1}$
	\FOR{\textbf{each} $\mathbf k\in I$}
	\STATE  $\hat{g}_{\mathbf k\cdot\mathbf z\bmod{M}} = \hat{g}_{\mathbf k\cdot\mathbf z\bmod{M}} + \hat{h}_{\mathbf k}$
	\ENDFOR
	\STATE $\mathbf h=\mathrm{iFFT\_1D}(\mathbf{\hat{g}})$
	\STATE $\mathbf h=M\mathbf h$
\end{algorithmic}
\begin{tabular}{p{1.25cm}p{4.75cm}p{4.6cm}}
	Output: & $\mathbf h = \mathbf A\mathbf{\hat h} = \left(h(\mathbf y_j)\, \sqrt{\frac{\omega(\mathbf y_j, \bm\mu)}{\varrho(\mathbf y_j, \bm\eta)}}\right)_{j=0}^{M-1}$ & function values of $h\in\Pi_{I,\psi(\circ,\bm\eta)}$
\end{tabular}
\end{algorithm}
\noindent
Given a frequency set $I\subset\mathbb{Z}^d$ of finite cardinality $|I|<\infty$, we consider the multivariate trigonometric polynomial $h\in\Pi_{I,\psi(\circ,\bm\eta)}$ as in \eqref{def:trig_poly_trafo_mult}
with Fourier coefficients $\hat{h}_{\mathbf k} $. The evaluation of $h$ at transformed lattice points $\mathbf y_j \in \Lambda_{\psi(\circ,\bm\eta)}(\mathbf z, M)$ simplifies to
\begin{align*}
	h(\mathbf y_j)\, \sqrt{\frac{\omega(\mathbf y_j, \bm\mu)}{\varrho(\mathbf y_j, \bm\eta)}}
	&= \sum_{\mathbf k\in I} \hat{h}_{\mathbf k} \,\mathrm{e}^{2\pi\mathrm i \mathbf{k} \cdot \psi^{-1}(\mathbf y_j,\bm\eta)} \\
	&= \sum_{\ell = 0}^{M-1} 
		\left( \sum_{ \substack{\mathbf k\in I,\\ \mathbf k \cdot \mathbf z \equiv \ell \,(\bmod{M})} } \hat{h}_{\mathbf k}
		\right)	\,\mathrm{e}^{2\pi\mathrm i \ell \frac{j}{M}} 
	= \sum_{\ell = 0}^{M-1} \hat g_\ell \,\mathrm{e}^{2\pi\mathrm i \ell \frac{j}{M}},
\end{align*}
with
\begin{align*}
	\hat g_\ell
	= \sum_{ \substack{\mathbf k\in I,\\ \mathbf k\cdot\mathbf z \equiv \ell \,(\bmod{M})} } \hat{h}_{\mathbf k}. 
\end{align*}
In total, the evaluation of such a function is realized by simply pre-computing $(\hat g_\ell)_{\ell=0}^{M-1}$
and applying a one-dimensional inverse fast Fourier transform, see Algorithm~\ref{alg:LFFT_eval}.

\subsection{Reconstruction of transformed multivariate trigonometric polynomials}
\begin{algorithm}[tb]
\caption{Reconstruction from sampling values along a transformed reconstructing {rank-$1$} lattice} 
\label{alg:LFFT_recon}
\begin{tabular}{p{1.5cm}p{4.25cm}p{4.75cm}}
	Input: 	
	& $I\subset\mathbb{Z}^d$ & frequency set of finite cardinality\\
	& $M\in\mathbb{N}$ & lattice size of $\Lambda_{\psi(\circ,\bm\eta)}(\mathbf z,M,I)$\\
	& $\mathbf z\in\mathbb{Z}^d$ & generating vector of $\Lambda_{\psi(\circ,\bm\eta)}(\mathbf z,M,I)$\\
	& $\mathbf{h} = \left(h(\mathbf y_j)\, \sqrt{\frac{\omega(\mathbf y_j, \bm\mu)}{\varrho(\mathbf y_j, \bm\eta)}}\right)_{j=0}^{M-1}$ & function values of $h\in\Pi_{I,\psi(\circ,\bm\eta)}$
\end{tabular}
\begin{algorithmic}
	\STATE $\mathbf {\hat{g}}=\mathrm{FFT\_1D}(\mathbf h)$
	\FOR{\textbf{each} $\mathbf k\in I$}
	\STATE  $\hat{h}_{\mathbf k}=\frac{1}{M}\hat{g}_{\mathbf k\cdot\mathbf z\bmod{M}}$
	\ENDFOR
\end{algorithmic}
\begin{tabular}{p{1.5cm}p{4.25cm}p{4.75cm}}
	Output: & $\mathbf{\hat{h}} = M^{-1}\mathbf A^*\mathbf{h} = \left(\hat{h}_{\mathbf k}\right)_{\mathbf k\in I}$ & Fourier coefficients supported on $I$
\end{tabular}
\end{algorithm}
\noindent
For the reconstruction of a multivariate trigonometric polynomial $h\in\Pi_{I,\psi(\circ,\bm\eta)}$ as in \eqref{def:trig_poly_trafo_mult} from transformed lattice points $\mathbf y_j\in\Lambda_{\psi(\circ,\bm\eta)}(\mathbf z, M, I)$ we utilize the exact integration property \eqref{eq:exact_integration_prop_2} and the fact that we have
\begin{align}
	\label{eq:reconR1L_matrix_prop}
	\sum_{j=0}^{M-1} \left( \mathrm{e}^{2\pi\mathrm i\frac{(\mathbf k-\mathbf h)\cdot\mathbf z}{M} } \right)^{j} = 
	\begin{cases}
		M & \text{for } \mathbf{k}\cdot\mathbf{z} \equiv \mathbf{k}\cdot\mathbf{h} \,(\bmod{M}), \\
		0 & \text{otherwise},
	\end{cases}
\end{align}
and $\mathbf A^* \mathbf A = M \textbf{I}$ with $\textbf{I}\in\mathbb{C}^{|I|\times|I|}$ being the identity matrix.
For fixed parameters $\bm\eta,\bm\mu\in\mathbb{R}_{+}^d$ and $\mathbf x_j = (x_1^{j},\ldots, x_d^{j})^{\top}$ we have input sample points as in \eqref{eq:Input_sample_points} of the form 
\begin{align*}
	h(\mathbf y_j)\, \sqrt{\frac{\omega(\mathbf y_j, \bm\mu)}{\varrho(\mathbf y_j, \bm\eta)}}
	= h(\psi(\mathbf x_j,\bm\eta)) \, \sqrt{ \omega(\psi(\mathbf x_j,\bm\eta), \bm\mu) \, \prod_{k=1}^{d}\psi_{k}'(x_k^{j}, \eta_k) } 
	= f(\mathbf x_j,\bm\eta,\bm\mu).
\end{align*}
For the reconstruction of the Fourier coefficients $\hat{h}_{\mathbf k}$ we use a single one-dimensional fast Fourier transform. The entries of the resulting vector $\left( \hat g_\ell \right)_{\ell=0}^{M-1}$ are renumbered by means of the unique inverse mapping $\mathbf k \mapsto\mathbf k \cdot\mathbf z\bmod{M}$, see Algorithm~\ref{alg:LFFT_recon}.

\subsection{Discrete approximation error}
We use Algorithms~\ref{alg:LFFT_eval} and \ref{alg:LFFT_recon} to illustrate the error bounds of Theorems~\ref{thm:L_infty_approx_error_multivar} and \ref{thm:Hm_approx_error_decay_multivar}.
We discretize the approximation error 
	$\left\| h - S_{I}^{\Lambda}h \right\|_{L_{\infty}\left(\left[-\frac{1}{2},\frac{1}{2}\right]^d, \sqrt{\frac{\omega(\circ,\bm\mu)}{\varrho(\circ,\bm\eta)}}\right)}$
by sampling both the test function $h$ and the approximated Fourier partial sum $S_{I}^{\Lambda}h$.
At first we use the sample data vector 
$\mathbf{h}=\left(h(\mathbf y_j)\, \sqrt{\frac{\omega(\mathbf y_j, \bm\mu)}{\varrho(\mathbf y_j, \bm\eta)}}\right)_{j=0}^{M-1}$ with transformed lattice points ${\mathbf y_j\in\Lambda_{\psi(\circ,\bm\eta)}(\mathbf z, M, I)}$ and apply Algorithm~\ref{alg:LFFT_recon} yielding a vector of approximated Fourier coefficients via ${\mathbf{\hat{h}} = M^{-1}\mathbf A^*\mathbf{h}}$.
By putting this vector $\mathbf{\hat{h}}$ into Algorithm~\ref{alg:LFFT_eval} 
we have computed the vector 
$\mathbf{h}_{\mathrm{approx}} := M^{-1} \mathbf A \mathbf A^*\mathbf{h} = \left( \sqrt{\frac{\omega(\mathbf y_j, \bm\mu)}{\varrho(\mathbf y_j, \bm\eta)}} S_{I_N^d}^{\Lambda} h(\mathbf y_j) \right)_{j=0}^{M-1}$.
In \cite[Corollary~1]{Kae2013} and \cite[Theorem~2.1]{KaPoVo13} it was shown under mild assumptions that for each frequency set ${I\subset\mathbb{Z}^d}$ that induces a reconstructing {rank-$1$} lattice, there is an $M\in\mathbb{N}$ such that ${|I| \leq M \lesssim |I|^2}$.
The upper bound can be improved to $M\leq C |I|\log|I|$ with high probability by using multiple {rank-$1$} lattices \cite{Kae16,KaPoVo17}.
For a reconstructing {rank-$1$} lattice $\Lambda_{\psi(\circ,\bm\eta)}(\mathbf z,M, I)$ we have ${\mathbf A^* \mathbf A = M \textbf{I}}$ with $\textbf{I}\in\mathbb{C}^{|I|\times|I|}$ being the identity matrix, according to \eqref{eq:reconR1L_matrix_prop}.
However, $\mathbf A \mathbf A^* \in\mathbb{C}^{M\times M}$ is generally not an identity matrix.
Hence, there is a gap between the initially given values $\mathbf h$ and the resulting vector $\mathbf h_{\mathrm{approx}}$ that we measure with the \emph{relative discrete approximation error} 
\begin{align}
	\label{eq:Discrete_ellinfty_error}
	\varepsilon_{\infty}
	:= \frac{ \|\mathbf h - \mathbf h_{\mathrm{approx}}\|_{\ell_{\infty}} }{ \|\mathbf h \|_{\ell_{\infty}} }
	= \frac{ \max_{j=0,\ldots,M-1} \left| \sqrt{\frac{\omega(\mathbf y_j, \bm\mu)}{\varrho(\mathbf y_j, \bm\eta)}} \left( h(\mathbf y_j) - S_{I_N^d}^{\Lambda} h(\mathbf y_j) \right) \right| }{ \max_{j=0,\ldots,M-1} \left| \sqrt{\frac{\omega(\mathbf y_j, \bm\mu)}{\varrho(\mathbf y_j, \bm\eta)}} \, h(\mathbf y_j) \right| }.
\end{align}
This error is a discretization of the particular weighted $L_{\infty}$-norm appearing in Theorem~\ref{thm:L_infty_approx_error_multivar}.
For hyperbolic crosses $I_{N}^{d}$ and appropriately chosen parameters $\bm\eta,\bm\mu\in\mathbb{R}_{+}^d$ we have the upper bound
\begin{align}
	\label{eq:discrete_upper_bound}
	\|\mathbf h - \mathbf h_{\mathrm{approx}}\|_{\ell_{\infty}} 
	&\leq \left\|h - S_{I_{N}^{d}}^{\Lambda}h \right\|_{L_{\infty}\left(\left[-\frac{1}{2},\frac{1}{2}\right]^d,\sqrt{\frac{\omega(\circ,\bm\mu)}{\varrho(\circ,\bm\eta)}}\right)}\\
	&= \left\| f - S_{I_{N}^{d}}^{\Lambda}f \right\|_{L_{\infty}(\mathbb{T}^d)}
	\leq 2 N^{-m} \|h\|_{\mathcal{H}^{m}\left(\left[-\frac{1}{2},\frac{1}{2}\right]^d\right)}. \nonumber
\end{align} 
Hence, the theoretical results predict a certain decay rate of the discretized approximation error for increasing ${N\in\mathbb{N}}$ with fixed $m\in\mathbb{N}$ and suitably chosen parameters $\bm\eta$ and $\bm\mu$.

It's important to note that the particular discretization \eqref{eq:Discrete_ellinfty_error} was exclusively sampled at the {rank-$1$} lattice nodes, so that we don't measure the quality of the approximation at any point outside the {rank-$1$} lattice.
This limitation is overcome by oversampling.
For the $L_{2}$-approximation error we lack a similar discretization approach.
In Theorem~\ref{thm:Hm_approx_error_decay_multivar} we showed that for fixed $m\in\mathbb{N}$ and suitably chosen parameters $\bm\eta$ and $\bm\mu$ the approximation error 
$\left\| h - S_{I_{N}^{d}}^{\Lambda}h \right\|_{L_{2}\left(\left[-\frac{1}{2},\frac{1}{2}\right]^d,\omega\right)} = 
\left\|f - S_{I_{N}^{d}}^{\Lambda}f \right\|_{L_{2}(\mathbb{T}^d)}$
is estimated from above by a constant times ${N^{-m}(\log N)^{(d-1)/2}\|f\|_{\mathcal{H}^{m}(\mathbb{T}^d)}}$.
By Parseval's equation we have 
\begin{align*}
\left\| f - S_{I_{N}^{d}}^{\Lambda}f \right\|_{L_{2}(\mathbb{T}^d)}^2
	= \sum_{\mathbf k\in\mathbb{Z}^d} |\hat{f_{\mathbf k}} - \hat{f}_{\mathbf k}^{\Lambda}|^2 
	&= \sum_{\mathbf k\in\mathbb{Z}^d\setminus I_{N}^{d}} |\hat{f_{\mathbf k}}|^2 + \sum_{\mathbf k\in I_{N}^{d}} |\hat{f_{\mathbf k}} - \hat{f}_{\mathbf k}^{\Lambda}|^2 \nonumber\\
	&= \|f\|_{L_{2}(\mathbb{T}^d)}^2 + \sum_{\mathbf k\in I_{N}^{d}} \left( |\hat{f_{\mathbf k}} - \hat{f}_{\mathbf k}^{\Lambda}|^2 - |\hat{f_{\mathbf k}}|^2 \right).
\end{align*}
We could evaluate the $L_2$-approximation error if we use Algorithm~\ref{alg:LFFT_recon} to reconstruct the approximated Fourier coefficients $\hat{f}_{\mathbf k}^{\Lambda}$ and if it is possible to calculate the Fourier coefficients $\hat{f_{\mathbf k}}$ for all ${\mathbf k\in I_{N}^{d}}$.

\section{Examples}
Throughout this section we always consider a constant weight function $\omega \equiv 1$.
We consider specific test functions ${h\in \mathcal{C}_{\mathrm{mix}}^{m}\left(\left[-\frac{1}{2},\frac{1}{2}\right]^d\right)}$ in combination with 
the logarithmic transformation \eqref{eq:logarithmic_trafo_comb} and
the sine transformation \eqref{eq:sine_trafo} in dimensions $d\in\{1,2,5\}$.
For both transformations we check if the proposed smoothness conditions~\eqref{eq:Cm_composition_criteria_mult} in Theorem~\ref{thm:Cm_composition_criteria_mult} are fulfilled.
These smoothness conditions lead to ranges of the multivariate parameter $\bm\eta\in\mathbb{R}_{+}^{d}$ appearing in the logarithmic transformation \eqref{eq:logarithmic_trafo_comb} for which the transformed functions $f$ of the form $\eqref{eq:f_is_transformed_h_mult}$ have a guaranteed Sobolev smoothness degree $m\in\mathbb{N}$ and we have $f\in \mathcal H^{m}(\mathbb{T}^d)$.
For such functions we have proven $L_{\infty}$-approximation error bounds in Theorem~\ref{thm:L_infty_approx_error_multivar}.
We compare the corresponding relative discrete approximation error $\varepsilon_{\infty}$ given in \eqref{eq:Discrete_ellinfty_error} for both 
the logarithmic transformation \eqref{eq:logarithmic_trafo_comb} with various values of $\bm\eta\in\mathbb{R}_{+}^{d}$ 
and the sine transformation \eqref{eq:sine_trafo}.

In this section we repeatedly specify the parameter vectors $\bm\eta = (\eta, \ldots, \eta)^{\top}$ 
that have the same number in each entry, for which we recall the short notation of just using a single bold number that appeared earlier in the definition \eqref{def:rank_one_lattice} of rank-$1$ lattices $\Lambda(\mathbf z,M)$ in form of $\mathbf 1 = (1,\ldots,1)^{\top}$.

\subsection{Univariate approximation}
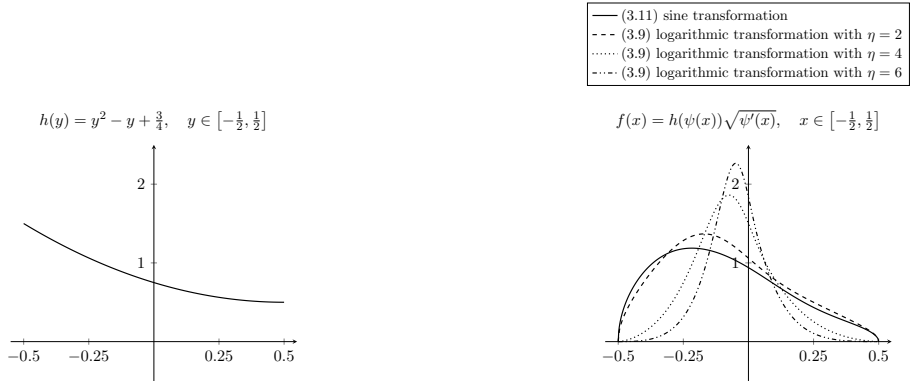
\begin{figure}[]
	\begin{minipage}[b]{.5\linewidth}
		\centering
		\begin{tikzpicture}[scale=0.55]
		\pgfmathsetmacro\ETA{2}
		\begin{axis}[
			samples=100,  
			xmin=-0.55, xmax=0.55, 
			ymin=-0.5, ymax=2.5,
			xtick ={-0.5,-0.25,0,0.25,0.5},
			title = {$h(y) = y^2-y+\frac{3}{4}, \quad y\in\textstyle\left[-\frac{1}{2},\frac{1}{2}\right]$},
			axis x line=center, axis y line=center,
			every axis plot/.append style={thick},
			legend style={at={(0.5,1.25)}, anchor=south,legend columns=4,legend cell align=left, font=\small}
		]
		\addplot[black, domain=-0.5:0.5] { x^2 - x +3/4 };
		\end{axis}
		\end{tikzpicture}
	\end{minipage}
	\begin{minipage}[b]{.5\linewidth}
		\centering
		\begin{tikzpicture}[
			scale=0.55,
			declare function = {logcomb(\ETA) = (1/2)*( (1+2*x)^(\ETA)-(1-2*x)^(\ETA) )/( (1+2*x)^(\ETA)+(1-2*x)^(\ETA) );}, 
			declare function = {logcombstrich(\ETA) = sqrt( 4*\ETA *( (1-4*x^2)^(\ETA-1) )/( ((1+2*x)^(\ETA)+(1-2*x)^(\ETA))^2 ) );}	
		]
		\pgfmathsetmacro\ETAa{2}
		\pgfmathsetmacro\ETAb{4}
		\pgfmathsetmacro\ETAc{6}
		\begin{axis}[
			samples=500,  
			xmin=-0.55, xmax=0.55, ymin=-0.5, ymax=2.5,
			xtick ={-0.5,-0.25,0,0.25,0.5},
			title = {$f(x) = h(\psi(x))\sqrt{\psi'(x)}, \quad x\in\textstyle\left[-\frac{1}{2},\frac{1}{2}\right]$},
			axis x line=center, axis y line=center,
			every axis plot/.append style={thick},
			legend style={at={(0.5,1.25)}, anchor=south,legend columns=1,legend cell align=left, font=\small}
		]
		\addplot[black,domain=-0.5:0.5] { (( (1/4)*sin(pi*deg(x))^2 )^2 - ( (1/2)*sin(pi*deg(x)) ) + 3/4)*sqrt(pi/2 * cos(pi*deg(x))) };
		\addlegendentry{\eqref{eq:sine_trafo} sine transformation};
		\addplot[domain=-1/2:1/2, samples = 200, samples y=0, dashed, thick, smooth] 
		 	( x, {((logcomb(\ETAa))^2 - (logcomb(\ETAa)) + 3/4)*(logcombstrich(\ETAa))} );
		\addlegendentry{\eqref{eq:logarithmic_trafo_comb} logarithmic transformation with $\eta=2$};
		\addplot[domain=-1/2:1/2, samples = 200, samples y=0, dotted, thick, smooth] 
		 	( x, {((logcomb(\ETAb))^2 - (logcomb(\ETAb)) + 3/4)*(logcombstrich(\ETAb))} );
		\addlegendentry{\eqref{eq:logarithmic_trafo_comb} logarithmic transformation with $\eta=4$};
		\addplot[domain=-1/2:1/2, samples = 200, samples y=0, dashdotdotted, thick, smooth] 
			( x, {((logcomb(\ETAc))^2 - (logcomb(\ETAc)) + 3/4)*(logcombstrich(\ETAc))} );
		\addlegendentry{\eqref{eq:logarithmic_trafo_comb} logarithmic transformation with $\eta=6$};
		\end{axis}
		\end{tikzpicture}
	\end{minipage}
	\caption{Plot of the univariate test function $h$ given in \eqref{eq:exemplary_h} on the left. A comparison of the transformed functions $f$ after applying the sine transformation as in \eqref{eq:trafo_function_sine} and after applying the logarithmic transformation \eqref{eq:logarithmic_trafo_comb2} as in \eqref{eq:trafo_function_comblog} with $\eta \in \{ 2, 4, 6 \}$ on the right.}
	\label{fig:univariate_test_function_plot}
\end{figure}
We consider the univariate test function
\begin{align}
	\label{eq:exemplary_h}
	h(y) = y^2-y+\frac{3}{4}, \quad y\in\textstyle\left[-\frac{1}{2},\frac{1}{2}\right],
\end{align}
shown on the left of Figure~\ref{fig:univariate_test_function_plot}.
We choose the weight function $\omega(y,\mu) \equiv 1$ for all ${\mu\in\mathbb{R}_{+}}$ so that the test function $h$ is in $L_{2}\left([-\frac{1}{2},\frac{1}{2}],\omega(\circ,\mu)\right) = L_{2}\left([-\frac{1}{2},\frac{1}{2}]\right)$.
Furthermore, we consider the logarithmic transformation $\psi(x,\eta)$ given in \eqref{eq:logarithmic_trafo_comb} with $x\in[-\frac{1}{2},\frac{1}{2}]$ and ${\eta \in\mathbb{R}_{+}}$.
Combining the test function $h$ in \eqref{eq:exemplary_h} with the logarithmic transformation of the form
\begin{align}
	\label{eq:logarithmic_trafo_comb2}
	\psi(x,\eta) &= \frac{1}{2}\,\frac{(1+2x)^\eta - (1-2x)^\eta}{(1+2x)^\eta + (1-2x)^\eta}
\end{align}
leads to transformed functions $f(\circ,\eta) := f(\circ,\eta,1)$ in the sense of \eqref{eq:f_is_transformed_h_mult} that are of the form
\begin{align}
	\label{eq:trafo_function_comblog}
	f( x,\eta)
	= h(\psi(x,\eta)) \sqrt{ \psi'(x,\eta) } 
	= \left( \psi(x,\eta)^2 - \psi(x,\eta) + \frac{3}{4}\right) \sqrt{ \psi'(x,\eta) }.
\end{align}
In Figure~\ref{fig:univariate_test_function_plot} we have a side-by-side comparison of the graphs of these transformed functions $f( x, \eta)$ with the parameter ${\eta \in\{ 1,  2,  4, 6\}}$.

We determine the values $\eta\in\mathbb{R}_{+}$ for which $f(\circ, \eta)$ as in \eqref{eq:trafo_function_comblog} is an element of $\mathcal H^{m}(\mathbb{T})$ by investigating the smoothness conditions~\eqref{eq:Cm_composition_criteria} in Theorem~\ref{thm:Cm_composition_criteria}.
First of all, for $\eta > 1$ the transformations $\psi(\circ,\eta)$ in \eqref{eq:logarithmic_trafo_comb2} are transformations of the form \eqref{def:Trafo_cube}.
The test function \eqref{eq:exemplary_h} is obviously in $\mathcal{C}^{m}\left(\left[-\frac{1}{2},\frac{1}{2}\right]\right)$ for any $m\in\mathbb{N}_{0}$.
We proceed to check conditions~\eqref{eq:Cm_composition_criteria} for a given $m\in\mathbb{N}_{0}$.
With the constant weight function $\omega\equiv 1$ these conditions simplify to the task of determining the values $\eta \in\mathbb{R}_{+}$ for which we have
\begin{align*}
	\left\|\left(\sqrt{\psi'(\circ, \eta)}\right)^{(j)}(\circ) \right\|_{L_{\infty}\left(\left[-\frac{1}{2},\frac{1}{2}\right]\right)}	< \infty,
\end{align*}
as well as
\begin{align*}
	\left(\sqrt{\psi'(x,\eta)}\right)^{(j)}(x) \to 0 \quad \text{for} \quad |x| \to \frac{1}{2}
\end{align*}
for all $j=0,\ldots,m$.
We obtain the following:
\begin{itemize}
\item 
	For $m=0$ we already mentioned in \eqref{eq:logarithmic_trafo_comb} that the functions $\psi'(\circ,\eta)$ are finite for $\eta\geq 1$ and converge to $0$ at the boundary points $\pm\frac{1}{2}$ for $\eta>1$. 
\item 
	For natural degrees of smoothness $m$ the $m$-th derivative of $\sqrt{\psi'(\circ, \eta)}$ is in $\mathcal{C}_{0}\left(\left[-\frac{1}{2},\frac{1}{2}\right]\right)$ if $\eta > 2m+1$.
\item 
	For values $2m+1 < \eta < 2m+3$ the $(m+1)$-th and all higher derivatives of $\sqrt{\psi'(\circ, \eta)}$ are unbounded
	and in case of $\eta = 2m+3$ they are bounded but not $\mathcal{C}_{0}\left(\left[-\frac{1}{2},\frac{1}{2}\right]\right)$.
\end{itemize}
In total, the transformed function $f(\circ,\eta)$ in \eqref{eq:trafo_function_comblog} is in $\mathcal H^{m}(\mathbb{T})$ for all $\eta > 2m+1$ according to the conditions in Theorem~\ref{thm:Cm_composition_criteria}.

Switching to the sine transformation \eqref{eq:sine_trafo} leads to a transformed function $f$ as given in \eqref{eq:f_is_transformed_h_mult} of the form
\begin{align}
	\label{eq:trafo_function_sine}
	f( x)
	= h(\psi(x)) \sqrt{ \psi'(x) } 
	= \left( \frac{1}{4}\,\sin^2(\pi x) - \frac{1}{2}\,\sin(\pi x) + \frac{3}{4} \right) \, \sqrt{ \frac{\pi}{2} \cos(\pi x) }
\end{align}
for $x\in\left[-\frac{1}{2},\frac{1}{2}\right]$.
The transformed function in \eqref{eq:trafo_function_sine} is only guaranteed to be in $L_{2}(\mathbb{T})$ according to Theorem~\ref{thm:Cm_composition_criteria}, because all derivatives of $\sqrt{\psi'}$ are unbounded.

Finally, we compare the relative discrete approximation errors $\varepsilon_{\infty}$ given in \eqref{eq:Discrete_ellinfty_error} of the sine transformed function in \eqref{eq:trafo_function_sine} and of the logarithmically transformed functions in \eqref{eq:trafo_function_comblog} with $\eta = 2$ and $\eta = 4$.
For this matter we consider the univariate hyperbolic cross $I_{N}^{1} = \{-N, \ldots, N\}$ and let $N \in \{4,5,\ldots, 80\}$.
In Figure~\ref{fig:numeric_test_univar} we showcase the approximation errors of both the sine transformed and the logarithmically transformed functions for $\eta = 2$ that behave similarly, because they are both $L_2(\mathbb{T})$-functions and are not guaranteed to have any upper bound as in \eqref{eq:discrete_upper_bound}.
By increasing the parameter to $\eta = 4$ it smoothed the logarithmically transformed function by one Sobolev smoothness degree, so that $f\in \mathcal H^{1}(\mathbb{T})$, causing the faster decaying upper bound \eqref{eq:discrete_upper_bound} and the faster decay of the relative approximation error $\varepsilon_{\infty}$. 
Another parameter increase to $\eta = 6$ increases the Sobolev smoothness by another degree so that $f\in \mathcal H^{2}(\mathbb{T})$ and for $\eta=8$ we have $f\in \mathcal H^{3}(\mathbb{T})$, resulting in even faster decays of the relative approximation errors $\varepsilon_{\infty}$ for large enough $N\in\mathbb{N}$.

\begin{remark}
	For the error function transformation \eqref{eq:error_function_trafo_comb} we obtained a very similar result regarding the parameter bound and the resulting approximation error decay.
	In fact, for non-negative integer degrees of smoothness $m \in \mathbb{N}_{0}$ the $m$-th derivative of $\sqrt{\psi'(\circ, \eta)}$ is in $\mathcal{C}_{0}\left(\left[-\frac{1}{2},\frac{1}{2}\right]\right)$ if $\eta > 2m+1$, too.
	Due to the now exponential density functions $\varrho(\circ,\eta)$ we obtain an overall faster decay of the discretized approximation error. 
	However, as with the logarithmic transformation \eqref{eq:logarithmic_trafo_comb} the rate of decay is at first only as fast as the decay obtained with the sine transformation \eqref{eq:sine_trafo} and increases rapidly once we increase the parameter values to $\eta> 3$.
\end{remark}

\begin{figure}[t]
	\centering
		\begin{tikzpicture}[baseline,scale=0.65]
		\begin{axis}[
		ymode = log,
		enlargelimits=false,
		xmin=4, xmax=80, ymin=1e-11, ymax=1e-0,
		ytick={1e-1,1e-2,1e-3,1e-4,1e-5,1e-6,1e-7,1e-8,1e-9,1e-10,1e-11},grid=both, 
		xlabel={$N$}, 
		ylabel={$\varepsilon_{\infty}$},
		legend style={at={(0.5,1.05)}, anchor=south,legend columns=1,legend cell align=left, font=\small,  
		},
		xminorticks=false,
		yminorticks=false
		]
		\addplot[mark options={solid},red!25!yellow,mark=triangle,mark size=1.5] coordinates {
			(4, 4.2248e-02)  (5, 3.1548e-02)  (6, 2.5202e-02)  (7, 2.0441e-02)  (8, 1.7202e-02)  (9, 1.4610e-02)  (10, 1.2700e-02)  (11, 1.1108e-02)  (12, 9.8707e-03)  (13, 8.8117e-03)  (14, 7.9569e-03)  (15, 7.2106e-03)  (16, 6.5911e-03)  (17, 6.0419e-03)  (18, 5.5760e-03)  (19, 5.1581e-03)  (20, 4.7973e-03)  (21, 4.4706e-03)  (22, 4.1845e-03)  (23, 3.9233e-03)  (24, 3.6919e-03)  (25, 3.4793e-03)  (26, 3.2890e-03)  (27, 3.1131e-03)  (28, 2.9544e-03)  (29, 2.8070e-03)  (30, 2.6729e-03)  (31, 2.5479e-03)  (32, 2.4335e-03)  (33, 2.3264e-03)  (34, 2.2278e-03)  (35, 2.1352e-03)  (36, 2.0495e-03)  (37, 1.9688e-03)  (38, 1.8938e-03)  (39, 1.8230e-03)  (40, 1.7569e-03)  (41, 1.6943e-03)  (42, 1.6357e-03)  (43, 1.5801e-03)  (44, 1.5279e-03)  (45, 1.4782e-03)  (46, 1.4314e-03)  (47, 1.3868e-03)  (48, 1.3446e-03)  (49, 1.3044e-03)  (50, 1.2663e-03)  (51, 1.2299e-03)  (52, 1.1953e-03)  (53, 1.1622e-03)  (54, 1.1306e-03)  (55, 1.1005e-03)  (56, 1.0717e-03)  (57, 1.0440e-03)  (58, 1.0176e-03)  (59, 9.9227e-04)  (60, 9.6798e-04)  (61, 9.4464e-04)  (62, 9.2224e-04)  (63, 9.0070e-04)  (64, 8.8000e-04)  (65, 8.6007e-04)  (66, 8.4089e-04)  (67, 8.2241e-04)  (68, 8.0459e-04)  (69, 7.8741e-04)  (70, 7.7083e-04)  (71, 7.5483e-04)  (72, 7.3937e-04)  (73, 7.2444e-04)  (74, 7.0999e-04)  (75, 6.9603e-04)  (76, 6.8251e-04)  (77, 6.6943e-04)  (78, 6.5675e-04)  (79, 6.4448e-04)  (80, 6.3258e-04)};
		\addlegendentry{\eqref{eq:sine_trafo} sine transformation};
		\addplot[mark options={solid}, blue, mark=o, mark size=1.0] coordinates {
			(4, 2.6252e-02)  (5, 1.9658e-02)  (6, 1.5625e-02)  (7, 1.2679e-02)  (8, 1.0561e-02)  (9, 9.0293e-03)  (10, 7.7887e-03)  (11, 6.8154e-03)  (12, 6.0506e-03)  (13, 5.3964e-03)  (14, 4.8578e-03)  (15, 4.4133e-03)  (16, 4.0199e-03)  (17, 3.6859e-03)  (18, 3.4011e-03)  (19, 3.1433e-03)  (20, 2.9196e-03)  (21, 2.7244e-03)  (22, 2.5447e-03)  (23, 2.3863e-03)  (24, 2.2455e-03)  (25, 2.1146e-03)  (26, 1.9977e-03)  (27, 1.8921e-03)  (28, 1.7934e-03)  (29, 1.7042e-03)  (30, 1.6226e-03)  (31, 1.5460e-03)  (32, 1.4761e-03)  (33, 1.4115e-03)  (34, 1.3507e-03)  (35, 1.2947e-03)  (36, 1.2426e-03)  (37, 1.1933e-03)  (38, 1.1477e-03)  (39, 1.1049e-03)  (40, 1.0643e-03)  (41, 1.0265e-03)  (42, 9.9082e-04)  (43, 9.5695e-04)  (44, 9.2524e-04)  (45, 8.9514e-04)  (46, 8.6651e-04)  (47, 8.3960e-04)  (48, 8.1392e-04)  (49, 7.8947e-04)  (50, 7.6640e-04)  (51, 7.4429e-04)  (52, 7.2321e-04)  (53, 7.0325e-04)  (54, 6.8405e-04)  (55, 6.6573e-04)  (56, 6.4832e-04)  (57, 6.3153e-04)  (58, 6.1547e-04)  (59, 6.0018e-04)  (60, 5.8539e-04)  (61, 5.7124e-04)  (62, 5.5772e-04)  (63, 5.4462e-04)  (64, 5.3206e-04)  (65, 5.2004e-04)  (66, 5.0836e-04)  (67, 4.9716e-04)  (68, 4.8643e-04)  (69, 4.7596e-04)  (70, 4.6592e-04)  (71, 4.5627e-04)  (72, 4.4686e-04)  (73, 4.3782e-04)  (74, 4.2911e-04)  (75, 4.2061e-04)  (76, 4.1243e-04)  (77, 4.0454e-04)  (78, 3.9683e-04)  (79, 3.8941e-04)  (80, 3.8222e-04)};
		\addlegendentry{\eqref{eq:logarithmic_trafo_comb} logarithmic transformation with $\eta=2$};
		\addplot[mark options={solid},red!75!yellow,mark=x,mark size=1.0] coordinates {
			(4, 3.6684e-03)  (5, 2.4061e-03)  (6, 5.0997e-04)  (7, 5.2479e-04)  (8, 5.6531e-05)  (9, 1.7015e-04)  (10, 9.0475e-05)  (11, 8.7376e-05)  (12, 6.7060e-05)  (13, 5.7632e-05)  (14, 4.8363e-05)  (15, 4.1645e-05)  (16, 3.5934e-05)  (17, 3.1138e-05)  (18, 2.7219e-05)  (19, 2.3993e-05)  (20, 2.1300e-05)  (21, 1.9028e-05)  (22, 1.7095e-05)  (23, 1.5390e-05)  (24, 1.3895e-05)  (25, 1.2605e-05)  (26, 1.1484e-05)  (27, 1.0505e-05)  (28, 9.6429e-06)  (29, 8.8809e-06)  (30, 8.1819e-06)  (31, 7.5580e-06)  (32, 7.0019e-06)  (33, 6.5041e-06)  (34, 6.0567e-06)  (35, 5.6531e-06)  (36, 5.2860e-06)  (37, 4.9440e-06)  (38, 4.6338e-06)  (39, 4.3514e-06)  (40, 4.0937e-06)  (41, 3.8578e-06)  (42, 3.6414e-06)  (43, 3.4399e-06)  (44, 3.2517e-06)  (45, 3.0784e-06)  (46, 2.9183e-06)  (47, 2.7702e-06)  (48, 2.6329e-06)  (49, 2.5053e-06)  (50, 2.3846e-06)  (51, 2.2716e-06)  (52, 2.1663e-06)  (53, 2.0681e-06)  (54, 1.9762e-06)  (55, 1.8903e-06)  (56, 1.8097e-06)  (57, 1.7325e-06)  (58, 1.6601e-06)  (59, 1.5920e-06)  (60, 1.5280e-06)  (61, 1.4677e-06)  (62, 1.4109e-06)  (63, 1.3568e-06)  (64, 1.3052e-06)  (65, 1.2564e-06)  (66, 1.2102e-06)  (67, 1.1665e-06)  (68, 1.1251e-06)  (69, 1.0858e-06)  (70, 1.0481e-06)  (71, 1.0121e-06)  (72, 9.7785e-07)  (73, 9.4529e-07)  (74, 9.1431e-07)  (75, 8.8480e-07)  (76, 8.5667e-07)  (77, 8.2945e-07)  (78, 8.0346e-07)  (79, 7.7866e-07)  (80, 7.5496e-07)};
		\addlegendentry{\eqref{eq:logarithmic_trafo_comb} logarithmic transformation with $\eta=4$};
		\addplot[mark options={solid},red!50!blue,mark=square,mark size=1.0] coordinates {
			(4, 1.2127e-03)  (5, 3.8788e-03)  (6, 3.3102e-03)  (7, 2.1827e-03)  (8, 1.3399e-03)  (9, 7.5543e-04)  (10, 4.2373e-04)  (11, 2.2129e-04)  (12, 1.1997e-04)  (13, 5.8627e-05)  (14, 3.2129e-05)  (15, 1.4045e-05)  (16, 8.3482e-06)  (17, 2.8596e-06)  (18, 2.3557e-06)  (19, 3.2622e-07)  (20, 8.2883e-07)  (21, 1.7313e-07)  (22, 4.0093e-07)  (23, 2.2489e-07)  (24, 2.5189e-07)  (25, 1.9239e-07)  (26, 1.8023e-07)  (27, 1.5217e-07)  (28, 1.3686e-07)  (29, 1.1997e-07)  (30, 1.0735e-07)  (31, 9.5656e-08)  (32, 8.5936e-08)  (33, 7.7313e-08)  (34, 6.9867e-08)  (35, 6.3301e-08)  (36, 5.7351e-08)  (37, 5.2114e-08)  (38, 4.7491e-08)  (39, 4.3395e-08)  (40, 3.9759e-08)  (41, 3.6506e-08)  (42, 3.3599e-08)  (43, 3.0994e-08)  (44, 2.8640e-08)  (45, 2.6520e-08)  (46, 2.4570e-08)  (47, 2.2788e-08)  (48, 2.1171e-08)  (49, 1.9706e-08)  (50, 1.8375e-08)  (51, 1.7152e-08)  (52, 1.6039e-08)  (53, 1.5018e-08)  (54, 1.4088e-08)  (55, 1.3224e-08)  (56, 1.2421e-08)  (57, 1.1676e-08)  (58, 1.0985e-08)  (59, 1.0364e-08)  (60, 9.7618e-09)  (61, 9.2172e-09)  (62, 8.7220e-09)  (63, 8.2531e-09)  (64, 7.8104e-09)  (65, 7.3995e-09)  (66, 7.0196e-09)  (67, 6.6916e-09)  (68, 6.3443e-09)  (69, 6.0348e-09)  (70, 5.7151e-09)  (71, 5.4710e-09)  (72, 5.1938e-09)  (73, 4.9553e-09)  (74, 4.7131e-09)  (75, 4.5364e-09)  (76, 4.3165e-09)  (77, 4.1092e-09)  (78, 3.9255e-09)  (79, 3.7542e-09)  (80, 3.6154e-09)};
		\addlegendentry{\eqref{eq:logarithmic_trafo_comb} logarithmic transformation with $\eta=6$};
		\addplot[mark options={solid},yellow!50!blue,mark=diamond,mark size=1.0] coordinates {
			(4, 1.7374e-02)  (5, 3.7763e-03)  (6, 1.4877e-03)  (7, 2.7295e-03)  (8, 2.5537e-03)  (9, 2.0041e-03)  (10, 1.4491e-03)  (11, 9.9757e-04)  (12, 6.6473e-04)  (13, 4.3220e-04)  (14, 2.7593e-04)  (15, 1.7343e-04)  (16, 1.0773e-04)  (17, 6.6159e-05)  (18, 4.0304e-05)  (19, 2.4327e-05)  (20, 1.4546e-05)  (21, 8.5588e-06)  (22, 5.0349e-06)  (23, 2.9378e-06)  (24, 1.7205e-06)  (25, 9.9460e-07)  (26, 5.8168e-07)  (27, 3.3220e-07)  (28, 1.9526e-07)  (29, 1.0936e-07)  (30, 6.5496e-08)  (31, 3.5207e-08)  (32, 2.2191e-08)  (33, 1.0946e-08)  (34, 7.7261e-09)  (35, 3.0108e-09)  (36, 2.9764e-09)  (37, 5.7568e-10)  (38, 1.2749e-09)  (39, 3.9020e-10)  (40, 9.2766e-10)  (41, 2.9772e-10)  (42, 4.4828e-10)  (43, 3.0344e-10)  (44, 6.3608e-10)  (45, 2.6997e-10)  (46, 1.1755e-09)  (47, 7.8001e-10)  (48, 4.4988e-10)  (49, 1.9397e-10)  (50, 1.7923e-10)  (51, 3.0917e-10)  (52, 4.9126e-10)  (53, 1.3824e-10)  (54, 1.2788e-10)  (55, 1.1848e-10)  (56, 1.0881e-10)  (57, 1.0609e-10)  (58, 9.9263e-11)  (59, 8.7010e-11)  (60, 8.3201e-11)  (61, 8.5883e-11)  (62, 7.5946e-11)  (63, 6.5438e-11)  (64, 7.4728e-11)  (65, 6.4721e-11)  (66, 5.3574e-11)  (67, 4.9805e-11)  (68, 4.6749e-11)  (69, 4.4747e-11)  (70, 4.6206e-11)  (71, 5.8899e-11)  (72, 3.7115e-11)  (73, 4.2753e-11)  (74, 3.3054e-11)  (75, 3.0913e-11)  (76, 3.4273e-11)  (77, 2.9090e-11)  (78, 3.0605e-11)  (79, 5.7212e-11)  (80, 2.3203e-11)};
		\addlegendentry{\eqref{eq:logarithmic_trafo_comb} logarithmic transformation with $\eta=8$};
		\end{axis}
		\end{tikzpicture}
	\caption{Comparison of discrete $\ell_\infty$-approximation error $\varepsilon_{\infty}$ as given in \eqref{eq:Discrete_ellinfty_error} of the one-dimensional test function \eqref{eq:exemplary_h} in combination with the sine transformation \eqref{eq:sine_trafo} and the logarithmic transformation \eqref{eq:logarithmic_trafo_comb} with $\eta \in \{ 2, 4, 6, 8 \}$.
	}
	\label{fig:numeric_test_univar}
\end{figure}	

\subsection{High-dimensional approximation}
Once again, we stress the fact that we have the fast Algorithm~\ref{alg:LFFT_eval} and Algorithm~\ref{alg:LFFT_recon} that are based on a single one-dimensional inverse FFT and an one-dimensional FFT, respectively.
We consider the test function
\begin{align}\label{eq:exemplary_h_mult}
	h(\mathbf y) = h(y_1,\ldots,y_d) = \sum_{j=1}^{d} y_j, \quad \mathbf y\in\textstyle\left[-\frac{1}{2},\frac{1}{2}\right]^d.
\end{align}
We choose the constant weight function $\omega(\circ,\bm \mu) \equiv 1$ for all ${\bm\mu\in\mathbb{R}_{+}^d}$
and the logarithmic transformation 
${\psi(\mathbf x,\bm\eta) = ((\psi_j(x_j,\eta_j))_{j=1}^{d})^{\top}}$  with $\mathbf x\in[-\frac{1}{2},\frac{1}{2}]^d$, the parameter ${\bm\eta = (\eta_1,\ldots,\eta_d)^{\top} \in\mathbb{R}_{+}^d}$ and its univariate components $\psi_j(x_j,\eta_j)$ in the form of \eqref{eq:logarithmic_trafo_comb2}.
Due to the constant weight function $\omega\equiv 1$ the test function $h$ is simply in $L_{2}\left([-\frac{1}{2},\frac{1}{2}]^d,\omega(\circ,\bm\mu)\right) = L_{2}\left([-\frac{1}{2},\frac{1}{2}]^d\right)$.
The test function $h$ in \eqref{eq:exemplary_h_mult} combined with the logarithmic transformation leads to the transformed functions $f(\circ,\bm\eta,1) =: f(\circ,\bm\eta)$ of the form \eqref{eq:f_is_transformed_h_mult}, reading as
\begin{align}
	\label{eq:trafo_function_comblog_mult}
	&f(\mathbf x,\bm\eta)
	= h(\psi_1(x_1,\eta_1),\ldots,\psi_d(x_d,\eta_d)) \, \prod_{j=1}^{d}\sqrt{ \psi_j'(x_j,\eta_j) } \nonumber\\
	&= \frac{1}{2} \sum_{j=1}^{d} \frac{(1+2x_j)^{\eta_j} - (1-2x_j)^{\eta_j}}{(1+2x_j)^{\eta_j} + (1-2x_j)^{\eta_j}} 
	\, \prod_{j=1}^{d}\sqrt{ 4\eta_j\,\frac{(1-4x_j^2)^{\eta_j-1}}{\left((1+2x_j)^{\eta_j}+(1-2x_j)^{\eta_j}\right)^2}}.
\end{align}
In Figure~\ref{fig:Plots_f_with_algTrafo2} we have a side-by-side comparison of the graphs of these transformed functions $f(\mathbf x, \bm\eta)$ for $d=2$ with the parameters ${\bm\eta \in\{\mathbf 1, \mathbf 2, \mathbf 4, \mathbf 6\}}$.

\begin{figure}[t]
\begin{minipage}{.245\linewidth}
\centering
	\includegraphics[width=\textwidth]{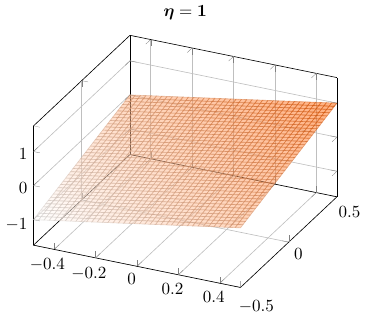}
\end{minipage}
\begin{minipage}{.245\linewidth}
\centering
	\includegraphics[width=\textwidth]{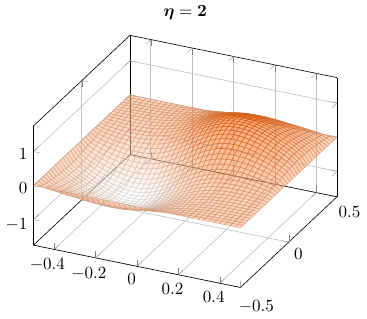}
\end{minipage}
\begin{minipage}{.245\linewidth}
\centering
	\includegraphics[width=\textwidth]{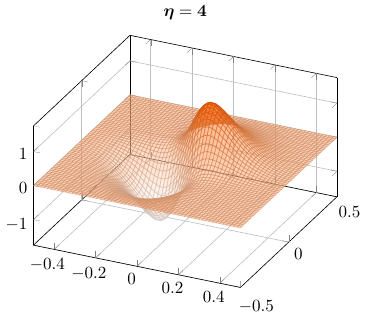}
\end{minipage}
\begin{minipage}{.245\linewidth}
\centering
	\includegraphics[width=\textwidth]{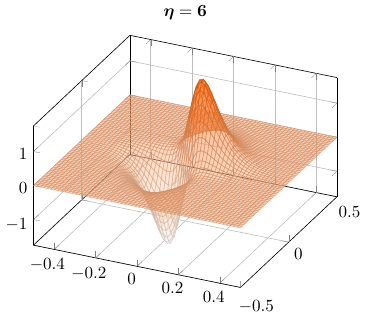}
\end{minipage}
\caption{Plots of the two-dimensional transformed function $f(\circ,\bm\eta)$ in \eqref{eq:trafo_function_comblog_mult} for ${\bm\eta \in\{\mathbf 1, \mathbf 2, \mathbf 4, \mathbf{6}\}}$ with the logarithmic transformation $\psi(\circ,\bm\eta)$ as in \eqref{eq:logarithmic_trafo_comb2}. }
\label{fig:Plots_f_with_algTrafo2}
\end{figure}

Next, we determine the values $\bm\eta\in\mathbb{R}_{+}^{d}$ for which $f(\circ, \bm\eta)$ as in \eqref{eq:trafo_function_comblog_mult} is element of $\mathcal H^{m}(\mathbb{T}^d)$ by investigating conditions~\eqref{eq:Cm_composition_criteria_mult} in Theorem~\ref{thm:Cm_composition_criteria_mult} for the derivatives of $\psi'$.
First of all, we observe that for $\eta_1,\ldots,\eta_d > 1$ the components $\psi_1,\ldots,\psi_d$ of the transformations $\psi(\circ,\bm\eta)$ in \eqref{eq:logarithmic_trafo_comb2} are transformations as defined in \eqref{def:Trafo_cube_mult}. As the test function \eqref{eq:exemplary_h_mult} is obviously in $\mathcal{C}_{\mathrm{mix}}^{m}\left(\left[-\frac{1}{2},\frac{1}{2}\right]^d\right)$ for any $m\in\mathbb{N}_{0}$, we proceed to check conditions~\eqref{eq:Cm_composition_criteria_mult} for a given $m\in\mathbb{N}_{0}$.
For the constant weight function these conditions simplify to the task of determining the values $\bm\eta = (\eta_1,\ldots,\eta_d)^{\top}\in\mathbb{R}_{+}^{d}$ for which we have
\begin{align*}
	\left\| \left(\sqrt{\psi'_{\ell}(\circ, \eta_{\ell})}\right)^{(j_{\ell})}(\circ) \right\|_{L_{\infty}\left(\left[-\frac{1}{2},\frac{1}{2}\right]\right)}	< \infty,
\end{align*}
as well as
\begin{align*}
	\left(\sqrt{\psi_{\ell}'}\right)^{(j_{\ell})}(x_{\ell}) \to 0 \quad \text{for} \quad |x_\ell|\to \frac{1}{2}
\end{align*}
for all $\ell=1,\ldots,d$ and $j_{\ell}=0,\ldots,m$.
For each dimension $\ell=1,\ldots,d$ we observe the following:
\begin{itemize}
\item 
	For $m=0$ we already mentioned in \eqref{eq:logarithmic_trafo_comb} that the functions $\psi_{\ell}'(\circ,\eta_{\ell})$ are finite for $\eta_{\ell}\geq 1$ but converge to $0$ at the boundary points $\pm\frac{1}{2}$ only for $\eta_{\ell}>1$. 
\item 
	For natural degrees of smoothness $m\geq 1$ the $m$-th derivative of $\sqrt{\psi_{\ell}'(\circ, \eta_{\ell})}$ is in $\mathcal{C}_{0}\left(\left[-\frac{1}{2},\frac{1}{2}\right]\right)$ if $\eta_{\ell} > 2m+1$.
\item 
	For values $2m+1 < \eta_{\ell} < 2m+3$ the $(m+1)$-th and all higher derivatives of $\sqrt{\psi_{\ell}'(\circ, \eta_{\ell})}$ are unbounded
	and in case of $\eta_{\ell} = 2m+3$ they are bounded but not $\mathcal{C}_{0}\left(\left[-\frac{1}{2},\frac{1}{2}\right]\right)$.
\end{itemize}
In total, according to the conditions in Theorem~\ref{thm:Cm_composition_criteria} the transformed function $f$ in \eqref{eq:trafo_function_comblog_mult} is in $\mathcal H^{m}(\mathbb{T}^d)$ if $\eta_{\ell} > 2m+1$ for all $\ell = 1,\ldots,d$.

Switching to the sine transformation \eqref{eq:sine_trafo} leads to a transformed function $f$ as given in \eqref{eq:f_is_transformed_h_mult} of the form
\begin{align*}
	f(\mathbf x)
	= h(\psi(\mathbf x)) \prod_{j=1}^{d} \sqrt{ \psi_j'(x_j) } 
	= \frac{1}{2^d} \sum_{j=1}^{d} \sin(\pi x_j) \, \prod_{j=1}^{d}\sqrt{ \frac{\pi}{2} \cos(\pi x_j) },
\end{align*}
that according to Theorem~\ref{thm:Cm_composition_criteria} is only in $\mathcal H^{0}(\mathbb{T}^d)$ because all derivatives of all $\sqrt{\psi_j'(\circ)}$ are unbounded.

Finally, for dimensions $d=2$ and $d=5$ we compare the relative discrete approximation errors $\varepsilon_{\infty}$ as in \eqref{eq:Discrete_ellinfty_error} of the sine transformed function in \eqref{eq:trafo_function_sine} and of the logarithmically transformed functions in \eqref{eq:trafo_function_comblog_mult} with $\bm\eta = \mathbf 2$, $\bm\eta = \mathbf 4$ and in case of $d=2$ additionally with $\bm\eta = \mathbf 6$.
For this matter we consider hyperbolic crosses $I_{N}^{d}$ as defined in \eqref{def:hyperbolic_cross} for $N \in \{8,9,\ldots, 200\}$ in $d=2$ and for $N \in \{8,9,\ldots,100\}$ in $d=5$.
We again emphasize the major advantage of Algorithms~\ref{alg:LFFT_eval} and \ref{alg:LFFT_recon} in having a complexity of just $\mathcal{O}(M \log M + d|I_{N}^{d}|)$ due to being based on a single univariate inverse FFT and univariate FFT, respectively. 
Thus, their computation time is rather quick considering the fact that we are dealing with up to $|I_{100}^{5}| = 665.145$ frequencies.
In Figure~\ref{fig:numeric_test_multivar} we showcase that the approximation errors of both the sine transformed and the logarithmically transformed functions for $\bm\eta = \mathbf 2$ behave similarly because they are both $L_2(\mathbb{T}^d)$-functions and are not guaranteed to have any upper bound as in \eqref{eq:discrete_upper_bound}.
By increasing the parameter to $\bm\eta = \mathbf 4$ it smoothed the logarithmically transformed function by one Sobolev smoothness degree, so that $f\in \mathcal H^{1}(\mathbb{T}^d)$, causing a faster decaying upper bound \eqref{eq:discrete_upper_bound} and thus a faster decay of the relative approximation error $\varepsilon_{\infty}$ as in \eqref{eq:Discrete_ellinfty_error}.
Another parameter increase to $\bm\eta = \mathbf 6$ for $d=2$ increases the Sobolev smoothness by another degree so that $f\in \mathcal H^{2}(\mathbb{T}^2)$ and the relative approximation error $\varepsilon_{\infty}$ decays even faster for large enough $N\in\mathbb{N}$.

\begin{figure}[t]
	\begin{minipage}[b]{.49\linewidth}
		\centering
		\begin{tikzpicture}[baseline,scale=0.65]
		\begin{axis}[
		ymode = log,
		enlargelimits=false,
		xmin=8, xmax=200, ymin=1e-07, ymax=1e-0,
		ytick={1e-1,1e-2,1e-3,1e-4,1e-5,1e-6,1e-7},grid=both, 
		xlabel={$N$}, 
		ylabel={$\varepsilon_{\infty}$},
		legend style={at={(0.5,1.05)}, anchor=south,legend columns=1,legend cell align=left, font=\small,  
		},
		xminorticks=false,
		yminorticks=false
		]
		\addplot[mark options={solid},red!25!yellow,mark=triangle,mark size=1.5] coordinates {
			(8, 1.2316e-01)  (9, 1.1577e-01)  (10, 1.0827e-01)  (11, 1.0137e-01)  (12, 1.0012e-01)  (13, 9.4045e-02)  (14, 8.9729e-02)  (15, 8.8029e-02)  (16, 8.5989e-02)  (17, 8.5058e-02)  (18, 8.0127e-02)  (19, 7.8404e-02)  (20, 7.7070e-02)  (21, 7.5974e-02)  (22, 7.3930e-02)  (23, 7.2652e-02)  (24, 6.9849e-02)  (25, 6.7803e-02)  (26, 6.4670e-02)  (27, 6.6895e-02)  (28, 6.5285e-02)  (29, 6.3289e-02)  (30, 6.2181e-02)  (31, 6.2189e-02)  (32, 6.1499e-02)  (33, 5.9811e-02)  (34, 5.8731e-02)  (35, 5.7691e-02)  (36, 5.7049e-02)  (37, 5.5540e-02)  (38, 5.5862e-02)  (39, 5.5570e-02)  (40, 5.4977e-02)  (41, 5.4095e-02)  (42, 5.2783e-02)  (43, 5.2845e-02)  (44, 5.2369e-02)  (45, 4.8569e-02)  (46, 4.9397e-02)  (47, 4.9930e-02)  (48, 5.0218e-02)  (49, 4.8608e-02)  (50, 4.7448e-02)  (51, 4.8860e-02)  (52, 4.7317e-02)  (53, 4.7624e-02)  (54, 4.7637e-02)  (55, 4.7182e-02)  (56, 4.6276e-02)  (57, 4.5102e-02)  (58, 4.5948e-02)  (59, 4.5475e-02)  (60, 4.4791e-02)  (61, 4.4789e-02)  (62, 4.4337e-02)  (63, 4.4259e-02)  (64, 4.3027e-02)  (65, 4.3409e-02)  (66, 4.2740e-02)  (67, 4.2152e-02)  (68, 4.2288e-02)  (69, 4.1401e-02)  (70, 4.1577e-02)  (71, 4.1162e-02)  (72, 4.0839e-02)  (73, 4.1065e-02)  (74, 4.0264e-02)  (75, 3.9601e-02)  (76, 4.0429e-02)  (77, 3.9672e-02)  (78, 3.9451e-02)  (79, 3.8978e-02)  (80, 3.8716e-02)  (81, 3.7727e-02)  (82, 3.8130e-02)  (83, 3.8342e-02)  (84, 3.7594e-02)  (85, 3.7688e-02)  (86, 3.7305e-02)  (87, 3.6698e-02)  (88, 3.6817e-02)  (89, 3.6590e-02)  (90, 3.6496e-02)  (91, 3.6231e-02)  (92, 3.6104e-02)  (93, 3.5871e-02)  (94, 3.5541e-02)  (95, 3.5506e-02)  (96, 3.5636e-02)  (97, 3.5302e-02)  (98, 3.5090e-02)  (99, 3.3213e-02)  (100, 3.4390e-02) (101, 3.4907e-02)  (102, 3.4687e-02)  (103, 3.3919e-02)  (104, 3.3779e-02)  (105, 3.3927e-02)  (106, 3.3527e-02)  (107, 3.3201e-02)  (108, 3.3165e-02)  (109, 3.3406e-02)  (110, 3.3290e-02)  (111, 3.3147e-02)  (112, 3.2404e-02)  (113, 3.2516e-02)  (114, 3.2387e-02)  (115, 3.2570e-02)  (116, 3.2457e-02)  (117, 3.2410e-02)  (118, 3.2205e-02)  (119, 3.1745e-02)  (120, 3.1469e-02)  (121, 3.1614e-02)  (122, 3.1306e-02)  (123, 3.0931e-02)  (124, 3.1075e-02)  (125, 3.1830e-02)  (126, 3.1069e-02)  (127, 3.1871e-02)  (128, 3.0672e-02)  (129, 3.1854e-02)  (130, 3.1753e-02)  (131, 3.0145e-02)  (132, 2.9251e-02)  (133, 3.0249e-02)  (134, 3.0633e-02)  (135, 2.9873e-02)  (136, 3.1396e-02)  (137, 2.9624e-02)  (138, 2.9438e-02)  (139, 3.1182e-02)  (140, 2.9921e-02)  (141, 2.9280e-02)  (142, 2.7794e-02)  (143, 3.1002e-02)  (144, 3.0540e-02)  (145, 2.9939e-02)  (146, 2.8597e-02)  (147, 2.9844e-02)  (148, 2.8581e-02)  (149, 2.9991e-02)  (150, 2.8693e-02)  (151, 2.8239e-02)  (152, 2.8032e-02)  (153, 2.8264e-02)  (154, 2.7929e-02)  (155, 2.8454e-02)  (156, 2.8651e-02)  (157, 2.7660e-02)  (158, 2.9510e-02)  (159, 2.7507e-02)  (160, 2.8041e-02)  (161, 2.7172e-02)  (162, 2.7129e-02)  (163, 2.6873e-02)  (164, 2.7213e-02)  (165, 2.8268e-02)  (166, 2.9214e-02)  (167, 2.6574e-02)  (168, 2.9380e-02)  (169, 2.9044e-02)  (170, 2.8518e-02)  (171, 2.7812e-02)  (172, 2.6308e-02)  (173, 2.8876e-02)  (174, 2.6916e-02)  (175, 2.6368e-02)  (176, 2.7505e-02)  (177, 2.8293e-02)  (178, 2.6092e-02)  (179, 2.7825e-02)  (180, 2.7030e-02)  (181, 2.6798e-02)  (182, 2.6171e-02)  (183, 2.6777e-02)  (184, 2.6412e-02)  (185, 2.8114e-02)  (186, 2.8725e-02)  (187, 2.7744e-02)  (188, 2.8759e-02)  (189, 2.6910e-02)  (190, 2.8346e-02)  (191, 2.8378e-02)  (192, 2.4801e-02)  (193, 2.6115e-02)  (194, 2.7617e-02)  (195, 2.7968e-02)  (196, 2.7975e-02)  (197, 2.7450e-02)  (198, 2.4645e-02)  (199, 2.6761e-02)  (200, 2.6791e-02) 
		};
		\addlegendentry{\eqref{eq:sine_trafo} sine transformation};
		\addplot[mark options={solid}, blue, mark=o, mark size=1.0] coordinates {
			 (8, 6.9814e-02)  (9, 6.8910e-02)  (10, 6.5724e-02)  (11, 6.3072e-02)  (12, 6.0151e-02)  (13, 5.6907e-02)  (14, 5.5408e-02)  (15, 5.2942e-02)  (16, 5.1820e-02)  (17, 4.9800e-02)  (18, 4.9805e-02)  (19, 4.6968e-02)  (20, 4.6050e-02)  (21, 4.5012e-02)  (22, 4.3847e-02)  (23, 4.3325e-02)  (24, 4.1597e-02)  (25, 4.1564e-02)  (26, 4.0077e-02)  (27, 4.0136e-02)  (28, 3.9158e-02)  (29, 3.8609e-02)  (30, 3.8086e-02)  (31, 3.7421e-02)  (32, 3.6255e-02)  (33, 3.6105e-02)  (34, 3.5669e-02)  (35, 3.5167e-02)  (36, 3.4324e-02)  (37, 3.3871e-02)  (38, 3.2342e-02)  (39, 3.3642e-02)  (40, 3.2554e-02)  (41, 3.2209e-02)  (42, 3.2027e-02)  (43, 3.0664e-02)  (44, 3.1041e-02)  (45, 3.0729e-02)  (46, 3.0364e-02)  (47, 3.0351e-02)  (48, 3.0150e-02)  (49, 2.9717e-02)  (50, 2.9163e-02)  (51, 2.9077e-02)  (52, 2.8304e-02)  (53, 2.8484e-02)  (54, 2.8556e-02)  (55, 2.8284e-02)  (56, 2.7374e-02)  (57, 2.7367e-02)  (58, 2.7235e-02)  (59, 2.6827e-02)  (60, 2.6816e-02)  (61, 2.6596e-02)  (62, 2.6366e-02)  (63, 2.6384e-02)  (64, 2.6155e-02)  (65, 2.5926e-02)  (66, 2.5567e-02)  (67, 2.5477e-02)  (68, 2.5241e-02)  (69, 2.5149e-02)  (70, 2.4869e-02)  (71, 2.4680e-02)  (72, 2.4597e-02)  (73, 2.4260e-02)  (74, 2.4212e-02)  (75, 2.3923e-02)  (76, 2.3472e-02)  (77, 2.3623e-02)  (78, 2.3585e-02)  (79, 2.3131e-02)  (80, 2.3352e-02)  (81, 2.2975e-02)  (82, 2.3077e-02)  (83, 2.2844e-02)  (84, 2.2670e-02)  (85, 2.2485e-02)  (86, 2.2389e-02)  (87, 2.2171e-02)  (88, 2.2193e-02)  (89, 2.2169e-02)  (90, 2.1947e-02)  (91, 2.1821e-02)  (92, 2.1470e-02)  (93, 2.1599e-02)  (94, 2.1279e-02)  (95, 2.1227e-02)  (96, 2.0931e-02)  (97, 2.1201e-02)  (98, 2.0926e-02)  (99, 2.0924e-02)  (100, 2.0694e-02) (101, 2.0826e-02)  (102, 2.0593e-02)  (103, 2.0540e-02)  (104, 2.0341e-02)  (105, 2.0348e-02)  (106, 2.0106e-02)  (107, 2.0114e-02)  (108, 2.0059e-02)  (109, 1.9973e-02)  (110, 1.9928e-02)  (111, 1.9681e-02)  (112, 1.9492e-02)  (113, 1.9608e-02)  (114, 1.9500e-02)  (115, 1.9384e-02)  (116, 1.9340e-02)  (117, 1.9151e-02)  (118, 1.9120e-02)  (119, 1.8916e-02)  (120, 1.9015e-02)  (121, 1.8987e-02)  (122, 1.8425e-02)  (123, 1.8705e-02)  (124, 1.8731e-02)  (125, 1.8614e-02)  (126, 1.8599e-02)  (127, 1.8523e-02)  (128, 1.8461e-02)  (129, 1.8108e-02)  (130, 1.8271e-02)  (131, 1.8230e-02)  (132, 1.8098e-02)  (133, 1.7982e-02)  (134, 1.7975e-02)  (135, 1.7956e-02)  (136, 1.7937e-02)  (137, 1.7859e-02)  (138, 1.7661e-02)  (139, 1.7649e-02)  (140, 1.7581e-02)  (141, 1.7484e-02)  (142, 1.7218e-02)  (143, 1.7467e-02)  (144, 1.7283e-02)  (145, 1.7389e-02)  (146, 1.7204e-02)  (147, 1.7258e-02)  (148, 1.7066e-02)  (149, 1.6857e-02)  (150, 1.7035e-02)  (151, 1.6942e-02)  (152, 1.6747e-02)  (153, 1.6873e-02)  (154, 1.6652e-02)  (155, 1.6436e-02)  (156, 1.6623e-02)  (157, 1.6541e-02)  (158, 1.6605e-02)  (159, 1.6455e-02)  (160, 1.6316e-02)  (161, 1.6352e-02)  (162, 1.6408e-02)  (163, 1.6304e-02)  (164, 1.6266e-02)  (165, 1.6211e-02)  (166, 1.6205e-02)  (167, 1.6081e-02)  (168, 1.6138e-02)  (169, 1.6022e-02)  (170, 1.5979e-02)  (171, 1.5917e-02)  (172, 1.5813e-02)  (173, 1.5897e-02)  (174, 1.5823e-02)  (175, 1.5716e-02)  (176, 1.5701e-02)  (177, 1.5573e-02)  (178, 1.5737e-02)  (179, 1.5529e-02)  (180, 1.5466e-02)  (181, 1.5416e-02)  (182, 1.5492e-02)  (183, 1.5350e-02)  (184, 1.5330e-02)  (185, 1.5306e-02)  (186, 1.5313e-02)  (187, 1.5279e-02)  (188, 1.4993e-02)  (189, 1.5042e-02)  (190, 1.4236e-02)  (191, 1.5022e-02)  (192, 1.4940e-02)  (193, 1.5063e-02)  (194, 1.4891e-02)  (195, 1.4903e-02)  (196, 1.4836e-02)  (197, 1.4825e-02)  (198, 1.4609e-02)  (199, 1.4347e-02)  (200, 1.4755e-02)  
		};
		\addlegendentry{\eqref{eq:logarithmic_trafo_comb} logarithmic transformation with $\bm\eta=\mathbf 2$};
		\addplot[mark options={solid},red!75!yellow,mark=x,mark size=1.0] coordinates {
			(8, 1.3064e-02)  (9, 9.4548e-03)  (10, 6.1442e-03)  (11, 6.3643e-03)  (12, 2.4421e-03)  (13, 2.5671e-03)  (14, 2.2509e-03)  (15, 1.4287e-03)  (16, 8.7295e-04)  (17, 8.8559e-04)  (18, 6.6800e-04)  (19, 6.5847e-04)  (20, 5.7679e-04)  (21, 5.0223e-04)  (22, 4.7692e-04)  (23, 4.6967e-04)  (24, 4.7291e-04)  (25, 4.2624e-04)  (26, 4.1256e-04)  (27, 3.8961e-04)  (28, 3.5063e-04)  (29, 3.4812e-04)  (30, 3.2602e-04)  (31, 3.2115e-04)  (32, 3.0530e-04)  (33, 2.8772e-04)  (34, 2.7370e-04)  (35, 2.6575e-04)  (36, 2.4763e-04)  (37, 2.4523e-04)  (38, 2.3516e-04)  (39, 2.2645e-04)  (40, 2.1003e-04)  (41, 2.1404e-04)  (42, 2.0109e-04)  (43, 1.9750e-04)  (44, 1.8345e-04)  (45, 1.8394e-04)  (46, 1.7877e-04)  (47, 1.7468e-04)  (48, 1.6407e-04)  (49, 1.6106e-04)  (50, 1.5580e-04)  (51, 1.5224e-04)  (52, 1.4810e-04)  (53, 1.4628e-04)  (54, 1.4003e-04)  (55, 1.3839e-04)  (56, 1.3249e-04)  (57, 1.2990e-04)  (58, 1.2828e-04)  (59, 1.2692e-04)  (60, 1.1697e-04)  (61, 1.1793e-04)  (62, 1.1503e-04)  (63, 1.1301e-04)  (64, 1.1018e-04)  (65, 1.0758e-04)  (66, 1.0388e-04)  (67, 1.0325e-04)  (68, 1.0080e-04)  (69, 9.6817e-05)  (70, 9.5354e-05)  (71, 9.5078e-05)  (72, 9.1490e-05)  (73, 9.1017e-05)  (74, 8.9734e-05)  (75, 8.6911e-05)  (76, 8.4727e-05)  (77, 8.4124e-05)  (78, 8.0172e-05)  (79, 8.1073e-05)  (80, 7.8172e-05)  (81, 7.7536e-05)  (82, 7.5402e-05)  (83, 7.5817e-05)  (84, 7.2949e-05)  (85, 7.2225e-05)  (86, 7.1695e-05)  (87, 6.9867e-05)  (88, 6.8223e-05)  (89, 6.7618e-05)  (90, 6.6552e-05)  (91, 6.6013e-05)  (92, 6.3721e-05)  (93, 6.3391e-05)  (94, 6.2176e-05)  (95, 6.1794e-05)  (96, 5.9745e-05)  (97, 5.9760e-05)  (98, 5.8976e-05)  (99, 5.6242e-05)  (100, 5.6668e-05)  (101, 5.6184e-05)  (102, 5.4527e-05)  (103, 5.4360e-05)  (104, 5.3111e-05)  (105, 5.2982e-05)  (106, 5.1540e-05)  (107, 5.1447e-05)  (108, 5.0474e-05)  (109, 5.0387e-05)  (110, 4.7690e-05)  (111, 4.8834e-05)  (112, 4.7928e-05)  (113, 4.7308e-05)  (114, 4.6656e-05)  (115, 4.6605e-05)  (116, 4.5382e-05)  (117, 4.4884e-05)  (118, 4.4409e-05)  (119, 4.3939e-05)  (120, 4.3246e-05)  (121, 4.2582e-05)  (122, 4.2649e-05)  (123, 4.1961e-05)  (124, 4.0791e-05)  (125, 4.0691e-05)  (126, 4.0057e-05)  (127, 4.0191e-05)  (128, 3.9552e-05)  (129, 3.8977e-05)  (130, 3.8484e-05)  (131, 3.8355e-05)  (132, 3.7126e-05)  (133, 3.7243e-05)  (134, 3.6806e-05)  (135, 3.6431e-05)  (136, 3.5840e-05)  (137, 3.5535e-05)  (138, 3.5192e-05)  (139, 3.5053e-05)  (140, 3.4516e-05)  (141, 3.4128e-05)  (142, 3.3561e-05)  (143, 3.3596e-05)  (144, 3.2818e-05)  (145, 3.2613e-05)  (146, 3.2124e-05)  (147, 3.2071e-05)  (148, 3.1349e-05)  (149, 3.0969e-05)  (150, 3.1129e-05)  (151, 3.0698e-05)  (152, 3.0569e-05)  (153, 3.0378e-05)  (154, 2.9946e-05)  (155, 2.9840e-05)  (156, 2.9305e-05)  (157, 2.9048e-05)  (158, 2.8945e-05)  (159, 2.8232e-05)  (160, 2.8031e-05)  (161, 2.8240e-05)  (162, 2.7229e-05)  (163, 2.7634e-05)  (164, 2.7269e-05)  (165, 2.7096e-05)  (166, 2.6623e-05)  (167, 2.6061e-05)  (168, 2.6338e-05)  (169, 2.5795e-05)  (170, 2.5907e-05)  (171, 2.5577e-05)  (172, 2.5308e-05)  (173, 2.5042e-05)  (174, 2.4946e-05)  (175, 2.4946e-05)  (176, 2.4615e-05)  (177, 2.4365e-05)  (178, 2.3981e-05)  (179, 2.3890e-05)  (180, 2.3444e-05)  (181, 2.3619e-05)  (182, 2.3556e-05)  (183, 2.3191e-05)  (184, 2.2713e-05)  (185, 2.2912e-05)  (186, 2.2604e-05)  (187, 2.2560e-05)  (188, 2.2292e-05)  (189, 2.2188e-05)  (190, 2.1733e-05)  (191, 2.1725e-05)  (192, 2.1475e-05)  (193, 2.1183e-05)  (194, 2.1236e-05)  (195, 2.1022e-05)  (196, 2.0645e-05)  (197, 2.0388e-05)  (198, 2.0566e-05)  (199, 2.0556e-05)  (200, 2.0275e-05)  };
		\addlegendentry{\eqref{eq:logarithmic_trafo_comb} logarithmic transformation with $\bm\eta=\mathbf 4$};
		\addplot[mark options={solid},red!50!blue,mark=square,mark size=1.0] coordinates {
			(8, 1.0903e-01)  (9, 8.5285e-02)  (10, 6.9509e-02)  (11, 6.8490e-02)  (12, 4.2589e-02)  (13, 4.2518e-02)  (14, 3.7084e-02)  (15, 2.8734e-02)  (16, 2.2158e-02)  (17, 2.1968e-02)  (18, 1.6464e-02)  (19, 1.6517e-02)  (20, 1.2022e-02)  (21, 9.8255e-03)  (22, 9.4716e-03)  (23, 9.1362e-03)  (24, 6.3548e-03)  (25, 5.3711e-03)  (26, 5.3896e-03)  (27, 4.8351e-03)  (28, 3.8586e-03)  (29, 3.7909e-03)  (30, 2.7118e-03)  (31, 2.6845e-03)  (32, 2.2263e-03)  (33, 2.1072e-03)  (34, 2.0971e-03)  (35, 1.6801e-03)  (36, 1.1629e-03)  (37, 1.1777e-03)  (38, 1.1814e-03)  (39, 1.1560e-03)  (40, 8.0846e-04)  (41, 8.3159e-04)  (42, 6.2865e-04)  (43, 6.3117e-04)  (44, 5.7255e-04)  (45, 4.7174e-04)  (46, 4.7109e-04)  (47, 4.7186e-04)  (48, 3.5188e-04)  (49, 3.0800e-04)  (50, 2.6006e-04)  (51, 2.5809e-04)  (52, 2.4599e-04)  (53, 2.4549e-04)  (54, 2.0289e-04)  (55, 1.8052e-04)  (56, 1.3333e-04)  (57, 1.3245e-04)  (58, 1.3288e-04)  (59, 1.3231e-04)  (60, 9.8355e-05)  (61, 9.6220e-05)  (62, 9.8180e-05)  (63, 7.9267e-05)  (64, 6.8601e-05)  (65, 6.3768e-05)  (66, 5.4110e-05)  (67, 5.4071e-05)  (68, 5.3410e-05)  (69, 5.2664e-05)  (70, 4.1928e-05)  (71, 4.2062e-05)  (72, 2.8397e-05)  (73, 2.8258e-05)  (74, 2.8299e-05)  (75, 2.7147e-05)  (76, 2.7167e-05)  (77, 2.3017e-05)  (78, 2.0832e-05)  (79, 2.0920e-05)  (80, 1.6159e-05)  (81, 1.4093e-05)  (82, 1.4283e-05)  (83, 1.4203e-05)  (84, 1.1181e-05)  (85, 1.0994e-05)  (86, 1.0980e-05)  (87, 1.1002e-05)  (88, 9.0981e-06)  (89, 9.1090e-06)  (90, 6.5598e-06)  (91, 5.8130e-06)  (92, 5.8199e-06)  (93, 5.8529e-06)  (94, 5.8240e-06)  (95, 5.8021e-06)  (96, 4.6604e-06)  (97, 4.6565e-06)  (98, 4.1908e-06)  (99, 3.4086e-06)  (100, 2.9619e-06)  (101, 2.9654e-06)  (102, 2.8656e-06)  (103, 2.8774e-06)  (104, 2.4692e-06)  (105, 2.2940e-06)  (106, 2.2891e-06)  (107, 2.2808e-06)  (108, 1.8568e-06)  (109, 1.8615e-06)  (110, 1.4719e-06)  (111, 1.4689e-06)  (112, 1.1652e-06)  (113, 1.1719e-06)  (114, 1.1541e-06)  (115, 1.1621e-06)  (116, 1.1693e-06)  (117, 1.0057e-06)  (118, 1.0006e-06)  (119, 9.6801e-07)  (120, 6.7956e-07)  (121, 6.5234e-07)  (122, 6.5671e-07)  (123, 6.4960e-07)  (124, 6.4161e-07)  (125, 6.3611e-07)  (126, 5.6220e-07)  (127, 5.6227e-07)  (128, 5.4063e-07)  (129, 5.3734e-07)  (130, 5.4170e-07)  (131, 5.4147e-07)  (132, 5.0882e-07)  (133, 5.0451e-07)  (134, 5.0154e-07)  (135, 4.9215e-07)  (136, 4.8591e-07)  (137, 4.8762e-07)  (138, 4.7111e-07)  (139, 4.7391e-07)  (140, 4.4184e-07)  (141, 4.4080e-07)  (142, 4.3611e-07)  (143, 4.4730e-07)  (144, 4.0751e-07)  (145, 3.9711e-07)  (146, 3.9793e-07)  (147, 3.9247e-07)  (148, 3.8868e-07)  (149, 3.8853e-07)  (150, 3.7702e-07)  (151, 3.7550e-07)  (152, 3.6974e-07)  (153, 3.6487e-07)  (154, 3.5097e-07)  (155, 3.4218e-07)  (156, 3.3377e-07)  (157, 3.2976e-07)  (158, 3.3385e-07)  (159, 3.3286e-07)  (160, 3.2023e-07)  (161, 3.1807e-07)  (162, 3.0701e-07)  (163, 3.0350e-07)  (164, 3.0027e-07)  (165, 2.9263e-07)  (166, 2.9425e-07)  (167, 2.9168e-07)  (168, 2.7781e-07)  (169, 2.7428e-07)  (170, 2.7315e-07)  (171, 2.6966e-07)  (172, 2.6797e-07)  (173, 2.6685e-07)  (174, 2.6083e-07)  (175, 2.5388e-07)  (176, 2.4941e-07)  (177, 2.4702e-07)  (178, 2.4638e-07)  (179, 2.4388e-07)  (180, 2.3432e-07)  (181, 2.3270e-07)  (182, 2.3073e-07)  (183, 2.2915e-07)  (184, 2.2267e-07)  (185, 2.2044e-07)  (186, 2.1787e-07)  (187, 2.1764e-07)  (188, 2.1466e-07)  (189, 2.0953e-07)  (190, 2.0555e-07)  (191, 2.0552e-07)  (192, 1.9959e-07)  (193, 1.9913e-07)  (194, 1.9983e-07)  (195, 1.9517e-07)  (196, 1.9116e-07)  (197, 1.9040e-07)  (198, 1.8613e-07)  (199, 1.8558e-07)  (200, 1.8133e-07)  };
		\addlegendentry{\eqref{eq:logarithmic_trafo_comb} logarithmic transformation with $\bm\eta=\mathbf 6$};
		\end{axis}
		\end{tikzpicture}
	\end{minipage}
\begin{minipage}[b]{.49\linewidth}
		\centering
		\begin{tikzpicture}[baseline,scale=0.65]
		\begin{axis}[
		ymode = log,
		enlargelimits=false,
		xmin=8, xmax=100, ymin=1e-04, ymax=1e-0,
		ytick={1e-1,1e-2,1e-3,1e-4,1e-5,1e-6},grid=both, 
		xlabel={$N$}, 
		ylabel={$\varepsilon_{\infty}$},
		legend style={at={(0.5,1.05)}, anchor=south,legend columns=1,legend cell align=left, font=\small,  
		},
		xminorticks=false,
		yminorticks=false
		]
		\addplot[mark options={solid},red!25!yellow,mark=triangle,mark size=1.5] coordinates {
			(8, 1.4082e-01)  (9, 1.3944e-01)  (10, 1.2593e-01)  (11, 1.3195e-01)  (12, 1.2559e-01)  (13, 1.2120e-01)  (14, 1.1700e-01)  (15, 1.1041e-01)  (16, 1.0625e-01)  (17, 1.0537e-01)  (18, 1.0588e-01)  (19, 1.0243e-01)  (20, 1.0192e-01)  (21, 9.4173e-02)  (22, 9.3574e-02)  (23, 9.3856e-02)  (24, 8.5224e-02)  (25, 9.1155e-02)  (26, 8.1913e-02)  (27, 8.4325e-02)  (28, 7.8682e-02)  (29, 8.4704e-02)  (30, 8.1016e-02)  (31, 7.7720e-02)  (32, 7.9508e-02)  (33, 7.7787e-02)  (34, 7.6247e-02)  (35, 6.7897e-02)  (36, 7.5090e-02)  (37, 6.9546e-02)  (38, 6.9953e-02)  (39, 6.9874e-02)  (40, 6.8250e-02)  (41, 6.7790e-02)  (42, 6.6948e-02)  (43, 6.5529e-02)  (44, 6.4932e-02)  (45, 6.6525e-02)  (46, 6.2358e-02)  (47, 6.1502e-02)  (48, 6.3003e-02)  (49, 5.9546e-02)  (50, 6.2895e-02)  (51, 6.2127e-02)  (52, 6.1232e-02)  (53, 6.2054e-02)  (54, 5.6843e-02)  (55, 6.0694e-02)  (56, 5.8763e-02)  (57, 5.8529e-02)  (58, 5.9155e-02)  (59, 5.7849e-02)  (60, 5.7613e-02)  (61, 5.7514e-02)  (62, 5.5141e-02)  (63, 5.8047e-02)  (64, 5.5279e-02)  (65, 5.3323e-02)  (66, 5.5033e-02)  (67, 5.5775e-02)  (68, 5.3109e-02)  (69, 5.5608e-02)  (70, 5.2199e-02)  (71, 4.9543e-02)  (72, 5.2716e-02)  (73, 5.1184e-02)  (74, 5.1227e-02)  (75, 4.8539e-02)  (76, 5.2330e-02)  (77, 4.9999e-02)  (78, 5.0299e-02)  (79, 5.1780e-02)  (80, 4.9209e-02)  (81, 5.0037e-02)  (82, 5.0456e-02)  (83, 4.6924e-02)  (84, 4.8883e-02)  (85, 4.9041e-02)  (86, 4.6809e-02)  (87, 4.7288e-02)  (88, 4.8307e-02)  (89, 4.7553e-02)  (90, 4.6827e-02)  (91, 4.7652e-02)  (92, 4.7325e-02)  (93, 4.7024e-02)  (94, 4.5347e-02)  (95, 4.3777e-02)  (96, 4.4126e-02)  (97, 4.4912e-02)  (98, 4.6110e-02)  (99, 4.6472e-02)  (100, 4.4287e-02)
		};\addlegendentry{\eqref{eq:sine_trafo} sine transformation};
		\addplot[mark options={solid}, blue, mark=o, mark size=1.0] coordinates {
			(8, 8.8754e-02)  (9, 8.7861e-02)  (10, 8.3030e-02)  (11, 8.1347e-02)  (12, 7.7534e-02)  (13, 7.2512e-02)  (14, 7.1221e-02)  (15, 6.9284e-02)  (16, 6.5260e-02)  (17, 6.0823e-02)  (18, 6.2172e-02)  (19, 5.9757e-02)  (20, 6.0699e-02)  (21, 5.8489e-02)  (22, 5.5764e-02)  (23, 5.4150e-02)  (24, 5.4396e-02)  (25, 5.2711e-02)  (26, 5.1309e-02)  (27, 5.1636e-02)  (28, 5.1364e-02)  (29, 4.8420e-02)  (30, 4.9890e-02)  (31, 4.7346e-02)  (32, 4.6684e-02)  (33, 4.8587e-02)  (34, 4.5908e-02)  (35, 4.4563e-02)  (36, 4.3068e-02)  (37, 4.3762e-02)  (38, 4.2703e-02)  (39, 4.1731e-02)  (40, 4.3847e-02)  (41, 4.1571e-02)  (42, 4.0274e-02)  (43, 4.1197e-02)  (44, 3.9672e-02)  (45, 3.9584e-02)  (46, 4.0278e-02)  (47, 3.7582e-02)  (48, 3.9561e-02)  (49, 3.7921e-02)  (50, 3.7942e-02)  (51, 3.7950e-02)  (52, 3.6909e-02)  (53, 3.5960e-02)  (54, 3.6807e-02)  (55, 3.6656e-02)  (56, 3.4564e-02)  (57, 3.5216e-02)  (58, 3.4080e-02)  (59, 3.5163e-02)  (60, 3.5329e-02)  (61, 3.4647e-02)  (62, 3.4684e-02)  (63, 3.3555e-02)  (64, 3.3465e-02)  (65, 3.3999e-02)  (66, 3.2565e-02)  (67, 3.2931e-02)  (68, 3.2373e-02)  (69, 3.2698e-02)  (70, 3.1628e-02)  (71, 3.3171e-02)  (72, 3.1470e-02)  (73, 3.0987e-02)  (74, 3.0337e-02)  (75, 3.0001e-02)  (76, 3.0693e-02)  (77, 2.9781e-02)  (78, 3.0462e-02)  (79, 2.9817e-02)  (80, 2.9275e-02)  (81, 3.0519e-02)  (82, 2.9682e-02)  (83, 2.9482e-02)  (84, 2.8347e-02)  (85, 2.9201e-02)  (86, 2.9260e-02)  (87, 2.8955e-02)  (88, 2.9283e-02)  (89, 2.8130e-02)  (90, 2.7958e-02)  (91, 2.8064e-02)  (92, 2.7833e-02)  (93, 2.7437e-02)  (94, 2.7932e-02)  (95, 2.7302e-02)  (96, 2.7451e-02)  (97, 2.7557e-02)  (98, 2.6918e-02)  (99, 2.7053e-02)	(100, 2.6876e-02)
		};\addlegendentry{\eqref{eq:logarithmic_trafo_comb} logarithmic transformation with $\bm\eta=\mathbf 2$};
		\addplot[mark options={solid},red!75!yellow,mark=x,mark size=1.0] coordinates {
			(8, 8.3373e-02)  (9, 8.4999e-02)  (10, 7.3285e-02)  (11, 6.9481e-02)  (12, 3.8419e-02)  (13, 3.8579e-02)  (14, 3.8679e-02)  (15, 3.7163e-02)  (16, 2.7124e-02)  (17, 2.7093e-02)  (18, 1.7991e-02)  (19, 2.0329e-02)  (20, 1.6914e-02)  (21, 1.7308e-02)  (22, 1.6583e-02)  (23, 1.6861e-02)  (24, 9.5070e-03)  (25, 9.1242e-03)  (26, 9.6162e-03)  (27, 8.8766e-03)  (28, 8.3628e-03)  (29, 8.6707e-03)  (30, 6.7219e-03)  (31, 7.3097e-03)  (32, 5.7095e-03)  (33, 5.8397e-03)  (34, 5.9909e-03)  (35, 5.9131e-03)  (36, 3.8582e-03)  (37, 3.9243e-03)  (38, 3.8957e-03)  (39, 3.9192e-03)  (40, 3.2632e-03)  (41, 3.2210e-03)  (42, 3.1133e-03)  (43, 2.9698e-03)  (44, 3.1110e-03)  (45, 2.8537e-03)  (46, 2.9982e-03)  (47, 3.0039e-03)  (48, 1.8203e-03)  (49, 1.8384e-03)  (50, 1.7646e-03)  (51, 1.8240e-03)  (52, 1.7563e-03)  (53, 1.8020e-03)  (54, 1.4993e-03)  (55, 1.5144e-03)  (56, 1.4163e-03)  (57, 1.4180e-03)  (58, 1.4459e-03)  (59, 1.4459e-03)  (60, 1.1168e-03)  (61, 1.1089e-03)  (62, 1.0896e-03)  (63, 1.1075e-03)  (64, 9.4483e-04)  (65, 9.1116e-04)  (66, 9.4141e-04)  (67, 9.7008e-04)  (68, 9.4664e-04)  (69, 9.9667e-04)  (70, 9.5531e-04)  (71, 9.6105e-04)  (72, 5.9995e-04)  (73, 6.0738e-04)  (74, 5.9555e-04)  (75, 5.9066e-04)  (76, 5.8474e-04)  (77, 5.9962e-04)  (78, 6.1136e-04)  (79, 6.0512e-04)  (80, 5.1099e-04)  (81, 4.8710e-04)  (82, 4.9630e-04)  (83, 4.8112e-04)  (84, 4.5894e-04)  (85, 4.6841e-04)  (86, 4.6325e-04)  (87, 4.4650e-04)  (88, 4.5351e-04)  (89, 4.3343e-04)  (90, 3.9565e-04)  (91, 3.9774e-04)  (92, 3.8874e-04)  (93, 3.9003e-04)  (94, 3.9614e-04)  (95, 3.8185e-04)  (96, 2.6792e-04)  (97, 2.5136e-04)  (98, 2.7313e-04)  (99, 2.6518e-04)   (100, 2.5822e-04) 
		};\addlegendentry{\eqref{eq:logarithmic_trafo_comb} logarithmic transformation with $\bm\eta=\mathbf 4$};
		\end{axis}
	\end{tikzpicture}
	\end{minipage} 
	\caption{Comparison of discrete $\ell_\infty$-approximation error $\varepsilon_{\infty}$ as given in \eqref{eq:Discrete_ellinfty_error} of the $d$-dimensional test function \eqref{eq:exemplary_h_mult}.
	On the left, in dimension $d=2$ we compare the sine transformation \eqref{eq:sine_trafo} and the logarithmic transformation \eqref{eq:logarithmic_trafo_comb} with $\bm\eta \in \{ \mathbf 2, \mathbf 4, \mathbf 6 \}$.
	On the right, in dimension $d=5$ we compare the sine transformation \eqref{eq:sine_trafo} and the logarithmic transformation \eqref{eq:logarithmic_trafo_comb} with $\bm\eta \in \{ \mathbf 2, \mathbf 4 \}$}
	\label{fig:numeric_test_multivar}
\end{figure}
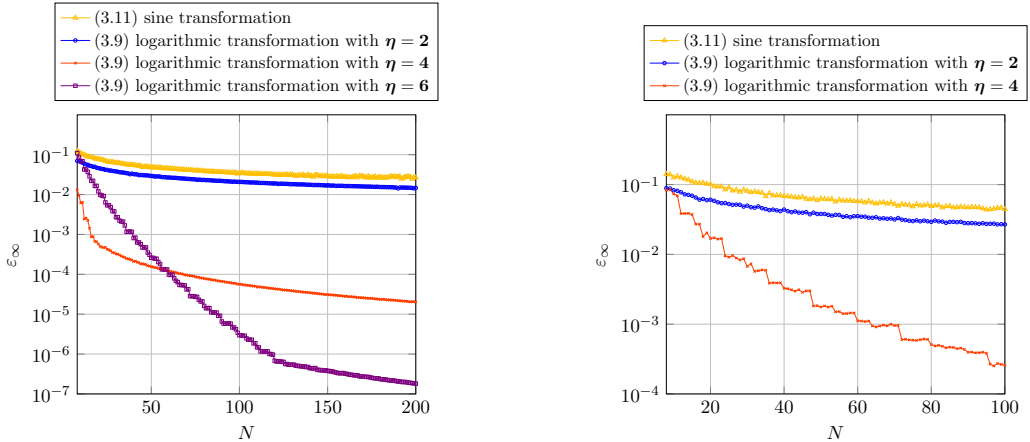

\section{Conclusion}
We consider functions $h\in L_{2}\left(\left[-\frac{1}{2},\frac{1}{2}\right]^d,\omega(\circ,\bm\mu)\right) \cap \mathcal{C}^{m}\left(\left[-\frac{1}{2},\frac{1}{2}\right]^d\right)$ with a parameterized weight function $\omega(\circ,\bm\mu):\left[-\frac{1}{2},\frac{1}{2}\right]^d\to[0,\infty), \bm\mu\in\mathbb{R}_{+}^d$ and discussed a particular periodization strategy that transforms them into functions $f$ that are continuously extendable on the torus $\mathbb T^d$.
The applied multivariate torus-to-cube transformations ${\psi: \left[-\frac{1}{2},\frac{1}{2}\right]^d\to\left[-\frac{1}{2},\frac{1}{2}\right]^d}$, in particular parameterized torus-to-cube transformations $\psi(\circ,\bm\eta)$ with $\bm\eta\in\mathbb{R}_{+}^d$,
let us control in which Sobolev space $\mathcal H^{m}(\mathbb{T}^d),m\in\mathbb{N}$ a function $h$ defined on the cube $\left[-\frac{1}{2},\frac{1}{2}\right]^d$ is transformed into.
Due to the embedding of the Sobolev spaces $\mathcal H^{m}(\mathbb{T}^d)$ into the Wiener algebra $\mathcal{A}(\mathbb{T}^d)$ of functions with absolutely summable Fourier coefficients, we have information on the rate of decay of the Fourier coefficients $\hat f_{\mathbf k}$ and $\hat h_{\mathbf k}$ without having to calculate them -- which in a lot of cases is not possible in the first place.
The $L_2$- and $L_{\infty}$-approximation error bounds for smooth functions on the torus $\mathbb{T}^d$ proposed in \cite[Theorem~2.30]{volkmerdiss} and \cite[Theorem~3.3]{KaPoVo13} can be adjusted for classes of non-periodic functions defined on $\left[-\frac{1}{2},\frac{1}{2}\right]^d$ by means of the inverse transformation 
${\psi^{-1}(\circ,\bm\eta)}$.
Furthermore, only slight modifications are necessary to incorporate such transformations into the efficient algorithms based on single reconstructing {rank-$1$} lattices for the evaluation and the reconstruction of transformed multivariate trigonometric polynomials presented in \cite[Algorithm~3.1 and 3.2]{kaemmererdiss}.
Algorithms based on multiple reconstructing {rank-$1$} lattices \cite{Kae16} and sparse fast Fourier transformations \cite{PoVo14} can be adjusted, too, but weren't discussed in depth in this work.

Our numerical tests in up to dimension $d=5$ show that these algorithms are still working within the proposed upper bounds for the approximation error. 
These tests also highlight the limited smoothness effect of static torus-to-cube transformations $\psi$ as opposed to specific parameterized torus-to-cube transformations $\left\{\psi(\circ,\bm\eta)\right\}_{\bm\eta\in\mathbb{R}_{+}^{d}}$ with which we can control the eventual smoothening effect.

\section*{Acknowledgements}
The authors thank the referees for their valuable suggestions and remarks.
The first named author gratefully acknowledges the support by the funding of the European Union and the
Free State of Saxony (ESF).

\bibliographystyle{spbasic}      


\end{document}